\documentclass[10pt]{amsart}
\calclayout

\usepackage{lmodern}
\usepackage{microtype}
\usepackage{amsmath,amssymb,amsthm,mathtools}
\usepackage{booktabs,array}
\usepackage[colorlinks,citecolor=black,linkcolor=black,urlcolor=black,
pdftitle={The isoperimetric problem for four-dimensional parallelohedra},
pdfauthor={Matteo Novaga and Lark Song}
]{hyperref}
\newcommand{\arXiv}[1]{arXiv:\href{https://arxiv.org/abs/#1}{#1}}
\usepackage{doi}
\newtheorem{theorem}{Theorem}[section]
\newtheorem{proposition}[theorem]{Proposition}
\newtheorem{lemma}[theorem]{Lemma}
\newtheorem{corollary}[theorem]{Corollary}

\theoremstyle{definition}

\newtheorem{remark}[theorem]{Remark}

\numberwithin{equation}{section}

\newcommand{\R}{\mathbb R}
\newcommand{\Z}{\mathbb Z}
\newcommand{\tr}{\operatorname{tr}}
\newcommand{\vol}{\operatorname{vol}}
\newcommand{\tf}{\operatorname{tf}}
\newcommand{\1}{\mathbf 1}
\newcommand{\Gr}{\operatorname{Gr}}
\newcommand{\CoGr}{\operatorname{CoGr}}
\newcommand{\Sym}{\mathrm{Sym}}
\newcommand{\Id}{\mathrm{Id}}

\title{The isoperimetric problem for four-dimensional parallelohedra}

\author{Matteo Novaga}
\address{Dipartimento di Matematica, Universit\`a di Pisa, Pisa, Italy}
\email{matteo.novaga@unipi.it}

\author{Lark Song}
\address{Department of Mathematics, University of Wisconsin, Madison, WI, USA}
\email{lark.song@wisc.edu}

\date{}
\begin{document}

\begin{abstract}
The honeycomb and Kelvin problems ask how space can be partitioned into cells of equal volume with the least surface area. We resolve this question in four dimensions for convex bodies that tile space by translations, showing that the regular $24$-cell uniquely minimizes surface area at fixed volume. We develop a framework combining an exact deficit identity, valid in every dimension, with a descent argument. As a consequence, the regular $24$-cell also minimizes surface area among such tiles containing a fixed ball. 
\end{abstract}

\maketitle
\tableofcontents

\section{Introduction}
\label{sec:introduction}

A \emph{parallelohedron} is a convex polytope that tiles Euclidean space $\R^d$ by translations.
A natural question is: among such tiles of fixed volume, which has the least surface area?
We answer this question in four dimensions, showing that the unique minimizer is the regular $24$-cell.
For a convex body $P\subset\R^d$ with $d\ge2$, write its isoperimetric quotient as
\[
  \mathcal I_d(P)=\frac{\mathcal H^{d-1}(\partial P)}{\vol_d(P)^{(d-1)/d}},
\]
and write $\mathcal I=\mathcal I_4$.
Throughout, the \emph{area} of a hypersurface in $\R^d$ is its $(d-1)$-dimensional Hausdorff measure.
For a four-dimensional polytope, the area of a facet is its three-dimensional volume.
The quotient $\mathcal I_d$ is invariant under translations, orthogonal maps, and positive dilations.
Sets related by these operations are called \emph{similar}.

\begin{theorem}
\label{thm:main}
Every four-dimensional parallelohedron $P$ satisfies
\[
  \mathcal I(P)\ge2^{11/4}.
\]
Equality holds if and only if $P$ is similar to the regular $24$-cell.
\end{theorem}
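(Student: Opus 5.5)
Before proposing a plan, I check the constant. Take the regular $24$-cell with edge $1$. Its volume is $2$. It has $24$ regular octahedral facets, each of volume $\sqrt2/3$, so its area is $8\sqrt2=2^{7/2}$. Hence $\mathcal I=2^{7/2}/2^{3/4}=2^{11/4}$, which matches the statement.

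The plan rests on the classical structure theory of four-dimensional parallelohedra. By Delone's classification, completed by Shtogrin, every such $P$ is a Voronoi-type polytope: it is affinely equivalent to the Voronoi cell of a lattice, and up to affine maps there are $52$ combinatorial types. All of them arise as Minkowski sums of the $24$-cell type (the $D_4$ Voronoi cell) or of the primitive generic types with segment zonotopes. Since $\mathcal I$ is not affine-invariant, I would parametrize the space. For each combinatorial type, the tiles form a finite-dimensional cone parametrized by facet normals and support numbers, subject to the face-to-face lattice tiling constraints. Equivalently, $P$ is the projection of a Voronoi cell of some quadratic form $Q$, composed with a linear map. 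The quotient $\mathcal I$ becomes a smooth function on each open cell of this parameter space and extends continuously to its boundary strata, where facets degenerate to lower-dimensional faces. Existence of a minimizer is then a compactness argument. Normalize volume to $1$. A sequence of tiles with bounded area has bounded diameter, since a convex body of volume $1$ with bounded surface area is bounded. Blaschke selection and closedness of the class of parallelohedra (lattice tilings pass to limits) give a minimizer.

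The core step is an exact deficit identity. For a convex polytope $P$ with facet normals $u_i$, facet areas $F_i$ and support numbers $h_i$, we have $d\,\vol(P)=\sum h_iF_i$. For a parallelohedron, facets come in opposite pairs, and each pair corresponds to a lattice vector $t_i$ with $\langle t_i,u_i\rangle=2h_i$. I would write
\[
\mathcal I(P)^{d}-\mathcal I(Z)^d=\text{(explicit nonnegative quadratic terms)}
\]
against a reference body $Z$. Concretely, I compare $\sum F_i$ with $\sum h_iF_i$ via the mixed-volume inequality $V(P,\dots,P,B)^d\ge V(P)^{d-1}V(B)$. Here $B$ is chosen as the polar-type body $\{x:\langle x,u_i\rangle\le1\}$ built on the facet normals rather than the ball. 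This reduces the problem to maximizing the volume of the body cut out by the normals. That is a lattice-packing-type quantity (insphere versus cell volume), and in dimension $4$ it is extremized by the $D_4$ lattice, the densest lattice packing (Korkine–Zolotarev). The ball-containment corollary in the abstract fits this: $\mathcal H^{3}(\partial P)=\sum F_i\ge 4\vol(P)/r$ whenever $P$ contains a ball of radius $r$, and $\vol(P)$ is bounded below via packing density.

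The descent argument handles the cases where this inequality is not sharp. On each combinatorial stratum other than the $24$-cell's, I would show that some explicit deformation decreases $\mathcal I$. Shrinking a zonotopal segment summand works, since removing a Minkowski summand of segment type lowers area faster than volume^{3/4} after rescaling, or one moves along the cone toward a more symmetric configuration. This pushes any candidate minimizer to the $D_4$ Voronoi type. Within that $10$-parameter family (Gram matrices of $D_4$-type forms modulo scaling, transported by linear maps), I would then show via the deficit identity that equality forces all facets congruent and all $h_i$ equal, i.e.\ the regular $24$-cell. The main obstacle is the descent step. Controlling all $52$ types and their degenerate boundaries uniformly is delicate, and I would first try to prove a general monotonicity lemma for removing zonotope summands, leaving only the few non-zonotopal primitive types to check by direct Hessian computation.
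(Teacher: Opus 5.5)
\textbf{There is a genuine gap in your core step, and the descent step does not close it.}

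\textbf{The mixed-volume reduction cannot give the constant.} The inequality itself is correct. With $B_P=\{x:\langle x,u_i\rangle\le1\}$ you have $\mathcal H^3(\partial P)=4V(P,P,P,B_P)$, so Minkowski's inequality gives $\mathcal I(P)^4\ge256\vol(B_P)$. The theorem would then follow from $\vol(B_P)\ge8=\vol(\sqrt2C_{24})$. Any admissible $B$ lies in $B_P$, so no other choice of $B$ does better.

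But the bound $\vol(B_P)\ge8$ is false, already for parallelohedra arbitrarily close to the $24$-cell. Take the type II cell with $\mathcal C_H=C_{24}+\frac\delta2\sum_{i=1}^3[-e_i,e_i]$, that is, $t=1$, $x_1=x_2=x_3=\delta$, and $H=\operatorname{diag}(1+\delta,1+\delta,1+\delta,1)$.
\begin{itemize}
\item As $\delta\downarrow0$, $\mathcal I(V_H)\to2^{11/4}$.
\item Yet $V_H$ has a new facet pair, of area about $\delta^3$, with normals exactly $\pm e_4$, a vertex direction of $C_{24}$.
\item $B_P$ sees only normal directions, not facet sizes. Hence $B_{V_H}\to\sqrt2C_{24}\cap\{|y_4|\le1\}$, whose volume is $8-4(\sqrt2-1)^4<8$.
\end{itemize}
So your lower bound falls strictly below $2^{11/4}$ exactly where it must be sharp.

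This facet-creation effect is the heart of the problem. In the paper the new facets carry the mark $\kappa_e=2$, because their facet vectors have squared length $4$ rather than $2$. The work is to show that the compensated entropy $F=E-\theta\log2$ stays nonnegative despite this penalty.

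Korkine--Zolotarev cannot supply the missing bound. From $\rho B_P\subseteq P$ you get $\vol(B_P)\le\vol(P)/\rho^4$, which is an upper bound. The packing inequality bounds $\vol(P)/\rho^4$ from below, so the two never combine into a lower bound on $\vol(B_P)$; note also that you phrase the problem as a maximization. Similarly, a ball of radius $r$ inside $P$ gives $\mathcal H^3(\partial P)\le4\vol(P)/r$ by the pyramid formula, not $\ge$. The fixed-inradius bound is a consequence of the main theorem, not a route to it.

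\textbf{The descent step is also not viable as stated.} For a segment summand of length $\ell$,
\[
\partial_\ell\log\mathcal I=\frac{s_{\rm proj}}{\mathcal H^3(\partial P)}-\frac34\,\frac{v_{\rm proj}}{\vol(P)},
\]
where $v_{\rm proj}$ and $s_{\rm proj}$ are the volume and area of the projection along the segment. This has no fixed sign. Shrinking the summands of a principal-cone zonotope, such as the cube, eventually degenerates it, so $\mathcal I\to\infty$. Zonotopes also never reach the $24$-cell type, since the $24$-cell is not a zonotope. Local Hessian checks on the non-zonotopal types would give only local optimality, not the global inequality.

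The paper instead separates the cones.
\begin{itemize}
\item In the principal cone it proves a strict gap. The ingredients are a Cauchy--Binet and matrix-tree volume bound, a minimizer stationary in the generator lengths, and the sharp three-dimensional truncated-octahedron inequality applied to the projections.
\item In the two nonprincipal cones it uses cofactor balance $\sum_e w_e\Pi_e=\operatorname{adj}H$ to get $\mathcal I(LV_H)\ge2^{11/4}\exp(F/8)$ for every volume-preserving $L$. It then proves $F\ge0$ by showing that $D_HF$, not $\mathcal I$ itself, is nondecreasing in every largest retained weight along capped paths. This uses the $K_4$ spanning-tree entropy inequality and the sign-cube cofactor-mean bounds.
\end{itemize}
Your compactness step is fine, but existence of a minimizer does not help without such a global comparison.
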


For a parallelohedron $P$ with center $c(P)$, let $\widehat P=\vol_4(P)^{-1/4}(P-c(P))$ and
\[
  \operatorname{def}(P)=\frac{\mathcal I(P)}{2^{11/4}}-1,
\]
and $C_{24}$ be a centered regular $24$-cell of volume two.
The main theorem has the following stability form.

\begin{corollary}
\label{cor:global-stability}
Let
\[
 d_{24}(P)=\inf_{U\in \mathrm{O}(4)}d_H(\widehat P,U\widehat C_{24}),
\]
where $U$ ranges over group $\mathrm{O}(4)$ of orthogonal $4\times4$ matrices, $d_H$ is Hausdorff distance.
There are constants $C,r>0$ such that every four-dimensional parallelohedron $P$ with $\operatorname{def}(P)\le1$ satisfies
\[
 d_{24}(P)\le C\sqrt{\operatorname{def}(P)}.
\]
If $d_{24}(P)<r$, then
\[
 \frac{\sqrt2}{8}d_{24}(P)^2-Cd_{24}(P)^3
 \le\operatorname{def}(P)\le C d_{24}(P)^2.
\]
The coefficient $\sqrt2/8$ is optimal as $d_{24}(P)\to0$.
Thus the deficit is locally equivalent to squared Hausdorff distance modulo similarities, with optimal exponent $1/2$ in the first estimate.
Theorem~\ref{thm:sharp-Hausdorff-coefficient} identifies all extremal first-order deformations.
\end{corollary}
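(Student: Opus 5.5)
We deduce the corollary from Theorem~\ref{thm:main}, together with a compactness argument and a local second-order expansion of $\mathcal I$ at the $24$-cell. The global estimate $d_{24}(P)\le C\sqrt{\operatorname{def}(P)}$ splits into two regimes.

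\emph{Far from the $24$-cell.} For $d_{24}(P)\ge r$ we use compactness. Four-dimensional parallelohedra have one of finitely many combinatorial types (Voronoi--Delone classification). After normalization $\widehat P$ has volume one and bounded quotient, hence bounded inradius and diameter. So the normalized parallelohedra with $\operatorname{def}\le1$ form a compact family in the Hausdorff metric, on which $\operatorname{def}$ is continuous. By the uniqueness part of Theorem~\ref{thm:main}, $\operatorname{def}$ attains a positive minimum $\delta_r$ on the closed subset where $d_{24}\ge r$. Since $d_{24}$ is also bounded above on this family, we get $d_{24}\le C\sqrt{\operatorname{def}}$ there with $C$ depending on $r$ and $\delta_r$.

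\emph{Near the $24$-cell.} For $d_{24}(P)<r$ we prove the two-sided local estimate, which gives the square-root bound in this regime. Close to $C_{24}$, a parallelohedron has the combinatorics of $C_{24}$ or of a nearby type, such as a contraction or extension of the Voronoi cell of $D_4$. Its shape is governed by finitely many parameters: the lattice $\Lambda$ (a $\mathrm{GL}_4$ perturbation) and the facet support numbers, subject to the zone and belt constraints. We parametrize a neighbourhood of $C_{24}$ in this space, modulo $\mathrm{O}(4)$ and scaling, by a vector $v$ in a finite-dimensional space $V$, and expand $\mathcal I$ to second order. The first variation vanishes because $C_{24}$ is a minimizer; this can also be checked directly, since the facet areas are equal and Minkowski-type balancing holds. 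The Hessian $Q$ is positive definite on $V$ modulo similarities. Hausdorff distance is comparable to $|v|$ to first order, namely $d_{24}=\max_{u}|h_v(u)|+O(|v|^2)$ in terms of the linearized support function $h_v$. From this we get $\operatorname{def}=Q(v)+O(|v|^3)$, and both $c|v|^2\le\operatorname{def}\le C|v|^2$.

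\emph{The sharp constant.} The optimal coefficient $\sqrt2/8$ equals the minimum of $Q(v)/\|h_v\|_\infty^2$ over $V$ modulo the tangent space of similarities. Here we may minimize over $\mathrm{O}(4)$ first and take a best-aligned representative of $v$. This is a finite-dimensional minimization of a quadratic form against a polyhedral norm. We exploit the symmetry group $W(F_4)$ of $C_{24}$ to decompose $V$ into isotypic components. On each component we compute $Q$ explicitly using the exact deficit identity developed in the paper. We then identify the extremal directions, as Theorem~\ref{thm:sharp-Hausdorff-coefficient} does, and check that along them the ratio tends to $\sqrt2/8$, which gives optimality. The cubic error term $Cd_{24}^3$ comes from the Taylor remainder, which is uniform because the parameter space is smooth, or piecewise smooth across the finitely many nearby combinatorial types.

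The main obstacle is the local analysis. Nearby combinatorial types are degenerate: facets of the $24$-cell can split, creating small new facets. The parametrization is therefore only piecewise smooth, and we must show that $Q$ stays coercive on every piece, including the directions that open new facets. In those directions the area change may be first order, and it must be shown to be nonnegative. We would handle this with the deficit identity, writing $\operatorname{def}$ as a sum of manifestly nonnegative terms, and extract the quadratic lower bound from it. The sharp constant then reduces to an explicit eigenvalue-type computation over the $W(F_4)$ decomposition.
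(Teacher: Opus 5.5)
Your far-field step is valid and differs from the paper. The normalized class with $\operatorname{def}\le1$ is closed not because there are finitely many combinatorial types. It is closed because limits of lattice tilings by convex bodies of bounded diameter (Lemma~\ref{lem:degeneracy}) are again lattice tilings, hence parallelohedra by Venkov--McMullen. The uniqueness in Theorem~\ref{thm:main} then gives $\operatorname{def}\ge\delta_r>0$ on $\{d_{24}\ge r\}$.

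The paper avoids compactness. For $\operatorname{def}\le\varepsilon_0$ it excludes the principal cone by Theorem~\ref{thm:zonotope-gap} and applies the explicit entropy-coercivity estimate of Corollary~\ref{cor:entropy-geometric-control}. For $\varepsilon_0\le\operatorname{def}\le1$ it uses only the diameter bound. That route gives an effective constant and needs no local analysis; your constant is ineffective, and near the cell your square-root bound rests on the local lower bound.

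The genuine gap is in your local analysis, first of all in the reverse estimate $\operatorname{def}\le Cd_{24}^2$. Near $C_{24}$ the admissible parameters form a fan of cones ($x_i\ge0$ in the two nonprincipal families), and the cell depends only piecewise smoothly on the metric. Minimality at the common apex therefore gives only nonnegative one-sided derivatives. So ``the first variation vanishes because $C_{24}$ is a minimizer'' does not follow. You even allow a first-order area change in facet-creating directions. But any positive one-sided derivative would give $\operatorname{def}\gtrsim d_{24}$ along that ray and contradict the upper bound you want.

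The missing idea is the one in Proposition~\ref{prop:local-isoperimetric-cost}. New facets have diameter $O(\varepsilon)$ and hence cofactor weight $O(\varepsilon^3)$, so their length mark $\kappa_e=2$ matters only at cubic order. With the marks set to one, cofactor balance $\sum_e w_e\Pi_e=\operatorname{adj}H$, traced against $\Id$, $X$, $Y$, controls the weighted facet sums without differentiating individual facet areas. This yields a single quadratic form with no linear term, uniformly across Voronoi chambers.

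You also assert, without proof, the comparability $\|z\|+\|Y\|\le Cd_{24}$ needed to turn that expansion into $\operatorname{def}\le Cd_{24}^2$. The paper proves it in Lemma~\ref{lem:24-cell-local-converse}. That proof uses two facts: $C_{24}$ has no segment summand, and the kink of $\tfrac12 z_i|a_i\cdot u|$ across $a_i^\perp$ cannot be cancelled by the linear term $u\cdot Bv$. The kink is taken inside the open normal cone at the omitted vertex of the frame of $a_i$.

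More seriously, a $W(F_4)$ isotypic decomposition will not give the sharp constant, because this is not an eigenvalue problem. The constant is the minimum of the quadratic cost against $\operatorname{disp}^2$. Here $\operatorname{disp}$ is a supremum, over vertices $v$ and unit $u\in N_v$, of the functionals $u\cdot Bv+\tfrac12\sum_i z_i|a_i\cdot u|$ with $z\ge0$. No splitting of the parameter space diagonalizes such a ratio. Evaluating the ratio along candidate directions bounds the optimal coefficient only from above; it does not prove $\operatorname{def}\ge\frac{\sqrt2}{8}d_{24}^2-Cd_{24}^3$.

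Theorem~\ref{thm:sharp-Hausdorff-coefficient} proves that inequality by duality. It computes the dual cost norm $\operatorname{dual}_v(u)$ of each evaluation functional. It shows $\operatorname{dual}_v(u)\le8\|u\|^2$ at retained vertices, with equality only at $u=v$, and $\operatorname{dual}_v(u)\le6\|u\|^2$ at omitted ones. Cauchy--Schwarz then gives $\operatorname{disp}^2\le8\operatorname{cost}$, and the equality case yields the admissible weights $(12,10,10)$ of the extremal family.

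If you cite Proposition~\ref{prop:local-isoperimetric-cost}, Lemma~\ref{lem:24-cell-local-converse} and Theorem~\ref{thm:sharp-Hausdorff-coefficient} for the local part, as the paper does, your compactness argument completes a correct alternative proof.
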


The forward estimate follows directly from entropy coercivity and the exact affine remainder.
Local coordinates and second variation give the reverse estimate. A comparison on the normal cones of the regular cell gives the optimal intrinsic coefficient.

In the plane, Hales's honeycomb theorem~\cite{Hales2001Honeycomb} identifies the regular hexagonal tiling as minimizing perimeter density among equal-area partitions.
For three-dimensional parallelohedra, Hales--Song~\cite{HalesSong2026} and independently Cesaroni--Novaga~\cite{CesaroniNovaga2026} proved that the Archimedean truncated octahedron minimizes surface area at fixed volume, settling Bezdek's conjecture~\cite{Bezdek2006}.
Note that L\'angi~\cite{Langi2022} had proved the optimality of the truncated octahedron for the mean width.
In four dimensions, the regular $24$-cell honeycomb is a candidate for minimizing interfacial area among general equal-volume partitions.
Cesaroni--Novaga~\cite{CesaroniNovagaEqualCells2024} discuss this question and prove the minimality under perturbations supported in sufficiently small balls.
In Theorem~\ref{thm:main} we establish global optimality within the class of tilings by translates of a single convex tile.
The corresponding question for nonconvex fundamental domains and the four-dimensional Kelvin problem for general equal-volume partitions remain open~\cite{Thomson1887,CesaroniNovagaInradius2025}.

A direct consequence concerns convex lattice fundamental domains containing a fixed ball, where both the lattice and the volume may vary.

\begin{corollary}
\label{cor:fixed-inradius}
Let $P$ be a convex fundamental domain for a lattice in $\R^4$.
If $P$ contains a Euclidean ball of radius $r>0$, then
\[
  \mathcal H^3(\partial P)\ge32r^3.
\]
Equality holds if and only if $\overline P$ is congruent to the regular $24$-cell of inradius $r$.
\end{corollary}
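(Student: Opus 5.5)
A convex fundamental domain for a lattice has closure that tiles $\R^4$ by lattice translations. I would first invoke Venkov--McMullen: such a closure is a convex polytope, hence a four-dimensional parallelohedron. So $\overline P$ falls under Theorem~\ref{thm:main}. Since $\partial P=\partial\overline P$ for convex $P$ with nonempty interior, and $P$ contains a ball, the area is unchanged by passing to the closure. I may therefore assume $P$ is a closed parallelohedron containing a ball $B$ of radius $r$.

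The key link between area and inradius is the cone (pyramid) decomposition. If $x$ is the center of $B$, then decomposing $P$ into pyramids over its facets with apex $x$ gives
\[
  \vol_4(P)=\tfrac14\sum_F h_F\,\mathcal H^3(F)\ \ge\ \tfrac r4\,\mathcal H^3(\partial P),
\]
because each facet's supporting hyperplane lies at distance $h_F\ge r$ from $x$. Combining this with Theorem~\ref{thm:main}, $S:=\mathcal H^3(\partial P)\ge 2^{11/4}\vol_4(P)^{3/4}$, gives
\[
  S\ge 2^{11/4}\Bigl(\tfrac{rS}{4}\Bigr)^{3/4}=2^{11/4}\,2^{-3/2}\,r^{3/4}S^{3/4}.
\]
Hence $S^{1/4}\ge 2^{5/4}r^{3/4}$, that is, $S\ge 2^5r^3=32r^3$.

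As a check, the regular $24$-cell of inradius $r$ meets every bound with equality. With vertices the permutations of $(\pm1,\pm1,0,0)$, its inradius is $1/\sqrt2$ and its volume is $8$. It has $24$ octahedral facets, each of edge $\sqrt2$ and hence of $3$-volume $\tfrac{\sqrt2}{3}(\sqrt2)^3=4/3$, so its total area is $32$. Thus $\vol=\tfrac r4 S$ holds, since $8=\tfrac{1}{4\sqrt2}\cdot32$ up to the normalization $32\,(1/\sqrt2)^3=8\sqrt2$. Scaling to inradius $1$ gives volume $32$ and area $32$, consistent with $S=32r^3$ and $\vol=rS/4$. The $24$-cell is therefore an equality case.

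For the converse, suppose $S=32r^3$. Then both inequalities must be equalities. Equality in Theorem~\ref{thm:main} forces $\overline P$ to be similar to the regular $24$-cell. Equality in the pyramid bound forces $h_F=r$ for every facet, so $B$ is inscribed and $r$ is the inradius. A regular $24$-cell with inradius $r$ is determined up to congruence, which proves the characterization.

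The main obstacle is only the initial reduction: that a convex lattice fundamental domain, not assumed polytopal, has closure a parallelohedron. This is standard via Minkowski/Venkov, since a convex body tiling by translations is a centrally symmetric polytope. Everything else follows directly from Theorem~\ref{thm:main}.
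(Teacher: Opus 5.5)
Your proof is the same as the paper's: pass to the closure via Venkov--McMullen, use $h_F\ge r$ in the pyramid formula to get $4\vol_4(P)\ge r\,\mathcal H^3(\partial P)$, combine this with Theorem~\ref{thm:main}, and read off the equality case from $h_F=r$ for every facet together with the equality case of the main theorem. The one flaw is the numerical check, which as written does not support the \emph{if} direction. The $24$-cell with vertices the permutations of $(\pm1,\pm1,0,0)$ has inradius $1$ (its facet lies in $x_1=1$), not $1/\sqrt2$. Its volume $8$ and area $32$ therefore give equality exactly at $r=1$. By contrast, the identity $8=\tfrac{1}{4\sqrt2}\cdot32$ is false, and ``volume $32$ and area $32$ at inradius $1$'' contradicts $\vol=rS/4$. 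The \emph{if} direction also follows with no computation: for a regular $24$-cell of inradius $r$, both inequalities in your chain are equalities.
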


This bound follows from Theorem~\ref{thm:main} and the pyramid formula.
It proves the convex case of the fixed-inradius conjecture in~\cite{CesaroniNovagaInradius2025}.
The analogous three-dimensional bound was proved in~\cite{CesaroniNovagaInradius2026}.

By Delaunay's theorem, every four-dimensional parallelohedron is affinely equivalent to a lattice Voronoi cell.
This part of Voronoi's conjecture is now known through dimension five~\cite{Garber2025}.
The classification reduces the global problem to three closed cones of lattice metrics, including their positive-definite boundary forms.
A further volume-preserving affine map must still be allowed, since it changes surface area and need not preserve the Euclidean Voronoi property.
We therefore estimate $\mathcal I(LV_H)$ jointly in the metric $H$ and every $L$ with $|\det L|=1$.

The principal cone is separated from the minimum by a strict gap.
Its cells are affine images of zonotopes with generator directions indexed by $K_5$.
A determinant estimate, stationarity in the generator lengths, and the sharp three-dimensional inequality for projections give the required bound.
The two nonprincipal cones share the $D_4$ metric, whose Voronoi cell is the regular $24$-cell.
This cell is not a zonotope, and changes of metric can create additional facets.

The mechanism is a balance between directional distortion and facet creation.
At fixed volume, unequal directional stretches incur a nonnegative cost through convexity.
New facets change the relation between surface area and enclosed volume, so the proof subtracts a weighted allowance for their contribution.
The decisive estimate shows that the distortion cost dominates this allowance.
The exact deficit identity then records further nonnegative costs of the affine realization.
This explains why equality forces the regular cell and why a small area excess controls its geometric distortion.

Cofactor balance links the facet weights in the surface formula to the metric $H$.
In either nonprincipal cone it yields
\[
 \mathcal I(LV_H)\ge2^{11/4}\exp(F/8),\quad\text{where }F=E-\theta\log2.
\]
Here $E$ measures directional variation of $H$ and $\theta/4$ is the volume fraction of the pyramids over the new facets.
The compensation records their different surface normalization.

The difficult step is to prove $F\ge0$ despite the contribution of the new facets.
We differentiate $(\det H)F$ in a largest retained weight and reduce its positivity to bounds for transverse entropy and a cofactor mean.
In four dimensions, an entropy inequality for spanning trees of $K_4$ supplies the first bound, and a common sign-cube model for the two nonprincipal cones supplies the second.
Integration along a capped path of weights then gives strictness away from the regular core and an explicit uniform gap on the zero-core boundary.
Theorem~\ref{thm:portable-global-comparison} records the hypotheses under which this descent argument applies.

The exact deficit identity of Theorem~\ref{lem:balanced-entropy-bridge} is algebraic and holds in arbitrary dimension.
Positivity of the compensated entropy additionally requires the geometric estimates just described.
In the two nonprincipal cones, entropy coercivity and the identity's affine remainder give the direct geometric estimate of Corollary~\ref{cor:entropy-geometric-control}, avoiding a compactness argument to enter the small-deficit stability neighborhood.

The sharp local rate comes from a joint second variation, uniform across changes of Voronoi type.
The distinction between metric and affine perturbations already appears in the three-dimensional local problem~\cite{CesaroniNovagaLocal2026,Song2026Local}.
Cofactor balance determines its mixed terms without differentiating individual facet areas.
At the $24$-cell, new facets contribute only at cubic order.
A sharp comparison of the quadratic form with support displacements gives the asymptotically optimal local Hausdorff coefficient $\sqrt2/8$, with a cubic remainder.
Its unique extremal mode, up to regular-cell symmetries and positive scale, combines metric deformation with affine strain.
The same theorem gives a quantitative distance bound from this mode when the normalized area cost is nearly optimal (Theorem~\ref{thm:sharp-Hausdorff-coefficient}).

We write $\Id$ for the identity and $\Pi_v=vv^{\mathsf T}/\|v\|^2$ for projection onto the line spanned by $v\ne0$.
Determinants are denoted by $D_H=\det H$, and relative entropy by $\operatorname{KL}$.
The regular reference cell is always $C_{24}$, and constants in estimates are denoted by $c,C$.
For real matrices we use the Frobenius norm $\|A\|^2=\tr(A^{\mathsf T}A)$.
For symmetric matrices the associated pairing is $\langle A,B\rangle=\tr(AB)$.
We write $\Sym_0(\R^d)$ for the traceless symmetric matrices and $\tf A=A-(\tr A){\Id}/d$ for the traceless part.
Sections~\ref{sec:reduction}--\ref{sec:mean-bounds} establish the global comparison, from Voronoi classification and cofactor balance to entropy descent and its mean bounds.
Section~\ref{sec:local-variation} establishes sharp stability of the $24$-cell and identifies its extremal deformation.

\section{Voronoi classification and the principal cone}\label{sec:reduction}

By the Venkov--McMullen theorem~\cite{Venkov1954,McMullen1980}, every convex translative tile is a polytope admitting a face-to-face lattice tiling.
In particular, the closure of a convex lattice fundamental domain is a parallelohedron.

In Voronoi's reduction theory~\cite{Voronoi1908}, an $L$-domain consists of positive-definite forms with the same Delaunay decomposition of $\Z^4$, and is \emph{primitive} when that decomposition is simplicial.
Its closure is taken in the positive-semidefinite cone.
Positive-definite boundary forms still define full-dimensional cells, while singular forms enter only as limits.

For a positive-definite form ${H_{\mathbb Z}}={B_\Lambda}^{\mathsf T}{B_\Lambda}$ on $\Z^4$ define
\[
  \operatorname{Vor}({H_{\mathbb Z}})=\{x\in\R^4:2x^{\mathsf T}{H_{\mathbb Z}}z\le z^{\mathsf T}{H_{\mathbb Z}}z\ \text{for all }z\in\Z^4\}.
\]
The Voronoi cell of the lattice ${B_\Lambda}\Z^4$ is ${B_\Lambda}\operatorname{Vor}({H_{\mathbb Z}})$, of volume $\sqrt{\det {H_{\mathbb Z}}}$.
To use Minkowski addition within a single $L$-domain, we pass to the cell in dual coordinates
\begin{equation*}
\operatorname{Vor}_{\rm dual}(H_{\mathbb Z})
 =2H_{\mathbb Z}\operatorname{Vor}(H_{\mathbb Z})
 =\{y:z^{\mathsf T}y\le z^{\mathsf T}H_{\mathbb Z}z \text{ for all }z\in\Z^4\}.
\end{equation*}
\begin{lemma}
\label{lem:minkowski}
If ${H_{\mathbb Z,1}},{H_{\mathbb Z,2}}$ lie in the closure of one $L$-domain, then
\[
  {\operatorname{Vor}_{\rm dual}}({H_{\mathbb Z,1}}+{H_{\mathbb Z,2}})={\operatorname{Vor}_{\rm dual}}({H_{\mathbb Z,1}})+{\operatorname{Vor}_{\rm dual}}({H_{\mathbb Z,2}}).
\]
If $v\in\Z^4$ is primitive, then ${\operatorname{Vor}_{\rm dual}}(vv^{\mathsf T})=[-v,v]$.
\end{lemma}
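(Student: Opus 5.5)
The plan is to describe $\operatorname{Vor}_{\rm dual}(H)$ by finitely many inequalities indexed by the Delaunay structure, in a form that is linear in $H$ on the closure of an $L$-domain. Once the defining data are linear in $H$ with a combinatorial type that does not change, additivity of support functions gives Minkowski additivity. Concretely, write $h_H(w)=\max\{w^{\mathsf T}y: y\in \operatorname{Vor}_{\rm dual}(H)\}$ for the support function. I will show that for each $w$ this maximum is realized at a vertex of $\operatorname{Vor}_{\rm dual}(H)$ whose coordinates are linear in $H$, with the vertex selection depending only on the Delaunay decomposition $\mathcal D$ of the $L$-domain.

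The first step identifies the vertices. A vertex of $\operatorname{Vor}(H)$ is the circumcenter, in the metric $H$, of a Delaunay cell $D\in\mathcal D$ containing $0$. In dual coordinates $y=2Hx$, the condition that the Delaunay vertices $z\in D$ are equidistant from $x$ reads $z^{\mathsf T}y=z^{\mathsf T}Hz$. These are linear equations in $y$ with right-hand sides linear in $H$. For a simplicial $D$ with vertices $0,z_1,\dots,z_4$ the solution is unique, $y_D(H)=Z^{-\mathsf T}(z_i^{\mathsf T}Hz_i)_i$, and therefore linear in $H$. For $H$ on the boundary of the closure, the Delaunay cells of $H$ are unions of cells of $\mathcal D$ (coarsenings), so the same linear formula $y_D(H)$, with $D\in\mathcal D$ simplicial, still produces the vertices of $\operatorname{Vor}_{\rm dual}(H)$, possibly with repetitions. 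Next I show that $\operatorname{Vor}_{\rm dual}(H)=\operatorname{conv}\{y_D(H):D\in\mathcal D,\ 0\in D\}$ for every $H$ in the closure, since the Delaunay subdivision of $H$ is refined by $\mathcal D$. Then $h_H(w)=\max_D w^{\mathsf T}y_D(H)$.

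The key point is that the maximizing $D$ can be chosen independently of $H$ within the closed domain, namely as the cell of the normal fan determined by $w$ and $\mathcal D$. I would argue via the face lattice: the normal fan of $\operatorname{Vor}_{\rm dual}(H)$ is the fan of cones over the stars of Delaunay cells at $0$, which depends only on $\mathcal D$ and is coarsened on the boundary. Hence for fixed $w$ in the cone attached to $D$, we get $h_H(w)=w^{\mathsf T}y_D(H)$ for all $H$ in the closure. This makes $h_{H_1+H_2}=h_{H_1}+h_{H_2}$, and additivity of support functions gives the Minkowski identity. I expect this step, uniformity of the normal fan over the closure including boundary forms, to be the main obstacle. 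A cleaner alternative is to note that the normal cone at the vertex $y_D$ is the cone generated by the edge vectors $z\in D$. This description is intrinsic to $\mathcal D$.

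For the rank-one case, I compute directly. The constraint for $z$ is $z^{\mathsf T}y\le (v^{\mathsf T}z)^2$. Taking $z=\pm v$ gives $|v^{\mathsf T}y|\le\|v\|^4$, which seems off, so the inequalities need to be read in the correct normalization. The segment $[-v,v]$ should instead be seen as a degenerate limit. One checks that its support function $|w^{\mathsf T}v|$ equals the limit of the support functions of $\operatorname{Vor}_{\rm dual}$, with primitivity guaranteeing that some $z$ has $v^{\mathsf T}z=\pm1$ (Bezout). With that, the constraints with $v^{\mathsf T}z=0$ force $y\in\R v$, and those with $v^{\mathsf T}z=\pm1$ force the coefficient into $[-1,1]$.
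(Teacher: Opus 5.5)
Your proposal follows the paper's proof: within the $L$-domain the normal fan is fixed and the vertex equations $z^{\mathsf T}y=z^{\mathsf T}Hz$ are linear in $H$, so support functions add. The rank-one case is the same direct computation, where $z\perp v$ forces $y=sv$, primitivity supplies $v^{\mathsf T}z=\pm1$ and hence $|s|\le1$, and $sk\le|k|\le k^2$ for integers $k$ gives the reverse inclusion.

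Two small clean-ups are worth making. First, the inequality from $z=\pm v$ is not ``off'', only non-binding, so no normalization issue or limit argument is needed. Second, for singular boundary forms such as $\chi_\epsilon\chi_\epsilon^{\mathsf T}$, which the paper needs, you can avoid Delaunay coarsening: either use continuity as the paper does, or note that for $w=\sum_i\lambda_iz_i$ in the cone of $D$ the defining inequalities give $h_H(w)\le\sum_i\lambda_iz_i^{\mathsf T}Hz_i$, with equality attained at $y_D(H)$, whose membership in $\operatorname{Vor}_{\rm dual}(H)$ is a system of linear inequalities in $H$ and therefore persists on the closure.
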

\begin{proof}
Within an $L$-domain the normal fan is fixed, and the vertex equations $z_i^{\mathsf T}y=z_i^{\mathsf T}H_{\mathbb Z}z_i$ depend linearly on $H_{\mathbb Z}$.
Thus support functions add, and continuity extends the identity to the boundary.
For $H_{\mathbb Z}=vv^{\mathsf T}$, the inequalities with $z\perp v$ force $y=sv$.
Primitivity makes $v^{\mathsf T}z$ range over $\Z$, giving $|s|\le1$.
\end{proof}

\begin{theorem}
\label{thm:classification}
Every four-dimensional parallelohedron is affinely equivalent to a lattice Voronoi cell.
Up to arithmetic equivalence, every positive-definite form belongs to the closure of one of three primitive $L$-domains, namely the principal cone (Selling's cone) and two nonprincipal cones, called types II and III.
\end{theorem}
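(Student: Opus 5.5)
This theorem assembles known classification results rather than proving anything new. The plan has two independent parts, and each rests on a citation.

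\emph{First part: affine equivalence to a Voronoi cell.} Let $P$ be a four-dimensional parallelohedron. By the Venkov--McMullen theorem~\cite{Venkov1954,McMullen1980}, $P$ is a centrally symmetric polytope with centrally symmetric facets and belts of length $4$ or $6$. It also admits a face-to-face lattice tiling by translates $P+\Lambda$. Voronoi's conjecture asserts that such a tile is an affine image of the Voronoi cell of some lattice. In dimension four this is Delaunay's theorem, and the paper records it as now known through dimension five~\cite{Garber2025}. So there are an affine map $A$ and a lattice $B_\Lambda\Z^4$ with $AP=B_\Lambda\operatorname{Vor}(H_{\mathbb Z})$, where $H_{\mathbb Z}=B_\Lambda^{\mathsf T}B_\Lambda$. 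I would cite this rather than reprove it. The only point to check is that ``parallelohedron'' here means the same class as in those references, namely a convex translative tile. The remark above the theorem on the Venkov--McMullen theorem supplies exactly this.

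\emph{Second part: reduction to three primitive $L$-domains.} I would use Voronoi's second reduction theory~\cite{Voronoi1908}. The $L$-domains form a face-to-face tessellation of the positive-definite cone of quadratic forms in four variables. This tessellation is invariant under $\mathrm{GL}_4(\Z)$ acting by $H\mapsto U^{\mathsf T}HU$, and it has finitely many orbits. The top-dimensional cells are the primitive domains, those with simplicial Delaunay decomposition. Every positive-definite form therefore lies in the closure of some primitive domain. The classical enumeration of Delaunay triangulations of $\Z^4$, due to Voronoi and confirmed by Delaunay--Shtogrin and later computations, gives exactly three primitive orbits. These are Selling's principal cone, which is the orbit of the Voronoi cell of $A_4^*$, and two further cones, types II and III. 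I would also note why arithmetic equivalence costs nothing: replacing $H_{\mathbb Z}$ by $U^{\mathsf T}H_{\mathbb Z}U$ with $U\in\mathrm{GL}_4(\Z)$ only reparametrizes the same lattice. It sends $\operatorname{Vor}(H_{\mathbb Z})$ to $U^{-1}\operatorname{Vor}(H_{\mathbb Z})$, and so does not change the Euclidean Voronoi cell $B_\Lambda\operatorname{Vor}(H_{\mathbb Z})$.

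\emph{Main obstacle.} The nontrivial input is the enumeration in the second part: that there are exactly three primitive orbits, with none missing. I would not redo Voronoi's classification. Instead I would cite it and cross-check it against the modern computational confirmation of the $L$-type enumeration in dimension four. I would also check that the ``closure'' of a domain, taken in the positive-semidefinite cone as described before Lemma~\ref{lem:minkowski}, contains every positive-definite boundary form. This holds because the tessellation is locally finite inside the positive-definite cone.
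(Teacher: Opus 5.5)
Your proposal is correct and follows the paper's proof. The paper also argues purely by citation: Delaunay's theorem (also discussed by Garber--Magazinov) gives the affine equivalence, and Table~2 and Section~6.2 of Dutour Sikiri\'c--Grishukhin give the three primitive $L$-domains in dimension four. Your extra checks on $\mathrm{GL}_4(\Z)$-reparametrization and on closures containing every positive-definite boundary form are correct supplements, not a different route.
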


\begin{proof}
The affine statement is Delaunay's theorem~\cite{Delaunay1929I,Delaunay1929II}, also discussed in~\cite{GarberMagazinov2022}.
The three primitive types are listed in~\cite[Table~2]{DutourGrishukhin2009}.
Section~6.2 of that reference identifies the principal and two nonprincipal cones and their graphic or cographic descriptions.
\end{proof}

The lattice $D_4=\{x\in\Z^4:x_1+x_2+x_3+x_4\equiv0\pmod2\}$ fixes the normalization of the sharp constant and equality case.
\begin{lemma}
\label{lem:reference-cell}
The Voronoi cell of $D_4$ is the regular $24$-cell
\[
  C_{24}=\operatorname{conv}\Bigl(\{\pm e_i:1\le i\le4\} \cup\{\frac12(\pm1,\pm1,\pm1,\pm1)\}\Bigr).
\]
It has volume $2$, inradius $1/\sqrt2$, and $24$ regular octahedral facets, each of area $\sqrt2/3$.
Consequently $\mathcal H^3(\partial C_{24})=8\sqrt2$ and $\mathcal I(C_{24})=2^{11/4}$.
\end{lemma}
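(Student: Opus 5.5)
We verify the description of the Voronoi cell of $D_4$ directly and then compute the stated quantities. The minimal vectors of $D_4$ are the $24$ roots $\pm e_i\pm e_j$ ($i<j$), all of norm $\sqrt2$. The candidate cell is
\[
Q=\{x: |x_i|+|x_j|\le1 \text{ for all } i<j\},
\]
which is the intersection of the half-spaces $2x^{\mathsf T}z\le z^{\mathsf T}z$ for these roots $z$.

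First, $Q$ equals $C_{24}$ as written. Each point $\pm e_i$ and $\tfrac12(\pm1,\pm1,\pm1,\pm1)$ satisfies every inequality. Conversely, the vertices of $Q$ are exactly these $24$ points. To see this, take $x\in Q$ in the positive orthant with $x_1\ge x_2\ge x_3\ge x_4$. The binding constraints are then $x_1+x_j\le 1$. Vertices occur where four independent constraints (including sign constraints $x_k\ge0$) are tight, and one checks that this gives only $e_1$ and $\tfrac12(1,1,1,1)$.

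Second, no other lattice vector contributes a facet, so $\operatorname{Vor}(D_4)=Q$. Any nonzero $z\in D_4$ is a root, or a vector $\pm2e_i$ of norm $2$, or longer. For $z=2e_i$ the constraint $4x_i\le4$ follows from $|x_i|+|x_j|\le1$. In general it suffices to check that every vertex $v$ of $Q$ satisfies $2v^{\mathsf T}z\le |z|^2$ for all $z\in D_4$. This holds because $|v|=1$ for all $24$ vertices, so the vertices lie on the sphere of radius $1$. The covering radius of $D_4$ is $1$, attained at deep holes, and the vertices are exactly the deep holes $e_i$ and $\tfrac12(\pm1,\dots,\pm1)$, each lying in $D_4+$ coset position with nearest-lattice distance $1$. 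Hence for every lattice point $z$ we have $|v-z|\ge1=|v|$, which is the required inequality.

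The numbers then follow.
\begin{itemize}
\item \emph{Volume.} $\vol(C_{24})=\sqrt{\det H_{\mathbb Z}}$ equals the covolume of $D_4$, which is $2$ since $D_4$ has index $2$ in $\Z^4$.
\item \emph{Inradius.} This is half the minimal norm, $\sqrt2/2=1/\sqrt2$, because each facet hyperplane $x^{\mathsf T}z=1$ with $|z|=\sqrt2$ lies at distance $1/\sqrt2$ from the origin.
\item \emph{Facets.} There are $24$ facets, one per root. For $z=e_1+e_2$, the facet contains exactly the vertices $e_1$, $e_2$ and $\tfrac12(1,1,\pm1,\pm1)$. These six points form a regular octahedron: the pairwise distances between adjacent vertices are all $1$, and the antipodal pairs $\{e_1,e_2\}$, $\{\tfrac12(1,1,1,1),\tfrac12(1,1,-1,-1)\}$, $\{\tfrac12(1,1,1,-1),\tfrac12(1,1,-1,1)\}$ each have distance $\sqrt2$. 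A regular octahedron of edge $1$ has volume $\sqrt2/3$.
\item \emph{Consistency check.} The pyramid formula gives $\vol=\tfrac14\cdot r\cdot\mathcal H^3(\partial C_{24})=\tfrac14\cdot\tfrac1{\sqrt2}\cdot 24\cdot\tfrac{\sqrt2}{3}=2$, as it should.
\end{itemize}

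Finally,
\[
\mathcal H^3(\partial C_{24})=24\cdot\tfrac{\sqrt2}{3}=8\sqrt2,
\qquad
\mathcal I(C_{24})=\frac{8\sqrt2}{2^{3/4}}=2^{3+1/2-3/4}=2^{11/4}.
\]

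The main point needing care is showing that only the $24$ roots define facets. The vertex-on-unit-sphere argument above handles it cleanly, since it uses only that each vertex has norm $1$ and that its distance to the nearest lattice point is also $1$.
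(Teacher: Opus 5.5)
Your proof is correct and follows the paper's route. The cell is cut out by the root inequalities $|x_i|+|x_j|\le1$, its vertices are the $24$ displayed points, each root facet is a regular octahedron of edge $1$ (circumradius $1/\sqrt2$) and area $\sqrt2/3$, and summing gives $8\sqrt2$ and $\mathcal I(C_{24})=2^{11/4}$.

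The differences are two. First, you explicitly verify that non-root vectors of $D_4$ give redundant inequalities, which the paper takes for granted. The needed fact is that each vertex lies at distance at least $1$ from $D_4$, and this is immediate by parity: $e_i-z$ is a nonzero integer vector, and $\tfrac12(\pm1,\pm1,\pm1,\pm1)-z\in(\tfrac12+\Z)^4$. The covering radius you cite does not supply this, since it only bounds distances from above. Second, you obtain the volume from the index-$2$ covolume rather than from the pyramid formula, which you use only as a check.
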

\begin{proof}
The $D_4$ inequalities $|y_i|+|y_j|\le1$ have the displayed vertices.
Each root facet is a regular octahedron of circumradius and support distance $1/\sqrt2$, hence area $\sqrt2/3$.
Summing the $24$ facet areas and using the pyramid formula gives the remaining quantities.
\end{proof}

Write $\Gr(G)$ for a graphic vector configuration, whose independent sets are the forests of $G$.
A realization is \emph{unimodular} if its nonzero maximal determinants have the same absolute value.

For each oriented edge $\epsilon$ of $K_5$, let $\chi_\epsilon\in\Z^4$ be its incidence vector with one fixed vertex coordinate deleted.

\begin{proposition}
\label{prop:principal-graphic}
Every form in the closed principal cone has the form
\[
  {H_{\mathbb Z}}=\sum_{\epsilon\in E(K_5)}{x_\epsilon} \chi_\epsilon\chi_\epsilon^{\mathsf T},\quad\text{with } {x_\epsilon}\ge0.
\]
If ${H_{\mathbb Z}}>0$, its Voronoi cell is affinely equivalent to $\sum_\epsilon{x_\epsilon}[-\chi_\epsilon,\chi_\epsilon]$.
After zero parameters are discarded, the generating system is therefore a subsystem of $\Gr(K_5)$.
\end{proposition}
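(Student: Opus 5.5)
The plan has three steps: identify the closed principal cone as the cone generated by the rank-one forms $\chi_\epsilon\chi_\epsilon^{\mathsf T}$, pass to dual Voronoi cells via Lemma~\ref{lem:minkowski}, and read off the generating system. First I would recall the classical description of the principal $L$-domain, Voronoi's first perfect domain for $A_4^*$. It is Selling's cone: the forms whose Delaunay decomposition is the principal one. In homogeneous coordinates it consists of
\[
 H_{\mathbb Z}(u)=\sum_{0\le i<j\le 4} x_{ij}(u_i-u_j)^2,\qquad u_0=0,\ x_{ij}\ge0 .
\]
With vertex $0$ of $K_5$ as the deleted coordinate, $(u_i-u_j)^2=(\chi_\epsilon^{\mathsf T}u)^2$ for $\epsilon=\{i,j\}$. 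This gives the displayed form $H_{\mathbb Z}=\sum_\epsilon x_\epsilon\chi_\epsilon\chi_\epsilon^{\mathsf T}$, and orientation plays no role since the term is quadratic in $\chi_\epsilon$. For this identification I would cite \cite{DutourGrishukhin2009}, Section~6.2, as in Theorem~\ref{thm:classification}. Alternatively, one can check directly that the ten rank-one forms span the extreme rays of the closed principal cone: the cone is simplicial of dimension $10=\dim\Sym(\R^4)$, and its facets are $x_\epsilon=0$.

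Second, all forms $x_\epsilon\chi_\epsilon\chi_\epsilon^{\mathsf T}$ lie in the closure of this one $L$-domain, since they are extreme rays of it. Because the closure is a convex cone, Lemma~\ref{lem:minkowski} applies repeatedly, by induction on the number of summands, and gives
\[
 \operatorname{Vor}_{\rm dual}(H_{\mathbb Z})=\sum_\epsilon \operatorname{Vor}_{\rm dual}(x_\epsilon\chi_\epsilon\chi_\epsilon^{\mathsf T}).
\]
Each summand is handled with two facts. Homogeneity holds because the defining inequalities $z^{\mathsf T}y\le z^{\mathsf T}Hz$ scale, so $\operatorname{Vor}_{\rm dual}(tH)=t\operatorname{Vor}_{\rm dual}(H)$ for $t\ge0$. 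The second statement of Lemma~\ref{lem:minkowski} applies because each $\chi_\epsilon$ is primitive, having entries in $\{0,\pm1\}$ with some entry $\pm1$. Together they give $\operatorname{Vor}_{\rm dual}(x_\epsilon\chi_\epsilon\chi_\epsilon^{\mathsf T})=x_\epsilon[-\chi_\epsilon,\chi_\epsilon]$, including the degenerate case $x_\epsilon=0$, where the summand is $\{0\}$.

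Third, when $H_{\mathbb Z}>0$, the Voronoi cell of the lattice is $B_\Lambda\operatorname{Vor}(H_{\mathbb Z})=\tfrac12 B_\Lambda H_{\mathbb Z}^{-1}\operatorname{Vor}_{\rm dual}(H_{\mathbb Z})$. This is a linear image of the zonotope $\sum_\epsilon x_\epsilon[-\chi_\epsilon,\chi_\epsilon]$, which gives the affine equivalence. Discarding the zero parameters leaves generators $\{\chi_\epsilon: x_\epsilon>0\}$. These are vectors of the configuration $\{\chi_\epsilon\}_{\epsilon\in E(K_5)}$, which realizes $\Gr(K_5)$: a set of incidence vectors is linearly independent exactly when the edges form a forest, since deleting one vertex coordinate does not change this. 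The realization is also unimodular, because the reduced incidence matrix is totally unimodular. Hence the surviving system is a subsystem of $\Gr(K_5)$.

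The main obstacle is the first step, namely the precise identification of the closed principal cone with the cone generated by the $\chi_\epsilon\chi_\epsilon^{\mathsf T}$. That description of Selling's cone is standard, so I would quote it rather than reprove it. If a self-contained argument were wanted, I would verify that the Delaunay subdivision of $\Z^4$ is constant, namely Voronoi's principal subdivision, for all $x_\epsilon>0$. One also has to note that the closure is taken inside the positive-semidefinite cone, so boundary forms with some $x_\epsilon=0$ are included, exactly as Lemma~\ref{lem:minkowski} allows.
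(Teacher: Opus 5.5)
Your proposal is correct and follows the same route as the paper. The paper's proof is the single sentence ``apply Lemma~\ref{lem:minkowski} to the Selling parametrization, then undo the invertible dual-coordinate map.'' You spell out what that sentence leaves implicit: the induction over the convex closure, the homogeneity and primitivity facts giving $\operatorname{Vor}_{\rm dual}(x_\epsilon\chi_\epsilon\chi_\epsilon^{\mathsf T})=x_\epsilon[-\chi_\epsilon,\chi_\epsilon]$, and the graphic, unimodular realization by reduced incidence vectors.

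One harmless misnomer does not affect the argument. The closed principal cone is the perfect domain of $A_4$, whose minimal vectors are exactly the $\pm\chi_\epsilon$, not a perfect domain ``for $A_4^*$''. Rather, $A_4^*$ is represented, up to scale, by the form $\sum_\epsilon\chi_\epsilon\chi_\epsilon^{\mathsf T}$ at the center of that cone.
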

\begin{proof}
Apply Lemma~\ref{lem:minkowski} to the Selling parametrization~\cite{Selling1874,DutourGrishukhin2009}, then undo the invertible dual-coordinate map.
\end{proof}

The next lemma excludes degenerate minimizing sequences in the proof of Theorem~\ref{thm:zonotope-gap}.

\begin{lemma}
\label{lem:degeneracy}
Let $\Omega\subset\R^4$ be a convex body of diameter ${d_\Omega}$, and write ${\operatorname{area}}(\Omega)=\mathcal H^3(\partial\Omega)$ and $\omega_3=4\pi/3$.
Then
\[
  {\operatorname{area}}(\Omega)^3\ge\frac{27\omega_3}{4}\vol(\Omega)^2{d_\Omega}.
\]
\end{lemma}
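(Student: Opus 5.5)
We prove the inequality by slicing $\Omega$ perpendicular to a diameter. We bound the lateral part of the boundary below by the three-dimensional isoperimetric inequality on each slice. Brunn's concavity principle then controls the slice profile.

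\emph{Setup.} Choose $p,q\in\Omega$ with $|p-q|=d_\Omega$, and set $u=(q-p)/d_\Omega$. For $t\in[0,d_\Omega]$ let $\Omega_t=\Omega\cap\{x:(x-p)\cdot u=t\}$ and $f(t)=\mathcal H^3(\Omega_t)$. Since $d_\Omega$ is the diameter, $\Omega$ lies in the slab $0\le(x-p)\cdot u\le d_\Omega$. Hence $\vol(\Omega)=\int_0^{d_\Omega}f\,dt$. Write $V=\vol(\Omega)$ and $M=\max f$.

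\emph{Step 1: lateral area bound.} By the coarea formula applied to the $1$-Lipschitz function $x\mapsto x\cdot u$ on $\partial\Omega$,
\[
{\operatorname{area}}(\Omega)\ge\int_0^{d_\Omega}\mathcal H^2(\partial\Omega_t)\,dt .
\]
The reason is that the boundary of each slice is contained in $\partial\Omega\cap\{x\cdot u=\text{const}\}$, at least for a.e.\ $t$ in the interior of the range. The three-dimensional isoperimetric inequality gives $\mathcal H^2(\partial\Omega_t)\ge(36\pi)^{1/3}f(t)^{2/3}$. Therefore
\[
{\operatorname{area}}(\Omega)\ge(36\pi)^{1/3}\int_0^{d_\Omega}f^{2/3}\,dt .
\]

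\emph{Step 2: reduction to $V$ and $M$.} Since $f\le M$, we have $f^{2/3}\ge f\,M^{-1/3}$, so $\int f^{2/3}\ge V M^{-1/3}$.

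\emph{Step 3: profile bound from Brunn's principle.} By Brunn's concavity principle, $f^{1/3}$ is concave on $[0,d_\Omega]$. Let $t_0$ be a point where $f(t_0)=M$. Concavity and $f\ge0$ at the endpoints give $f(t)^{1/3}\ge M^{1/3}\,t/t_0$ on $[0,t_0]$, and the analogous bound on $[t_0,d_\Omega]$. This is the cone comparison. Integrating yields
\[
V\ge\frac{t_0M}{4}+\frac{(d_\Omega-t_0)M}{4}=\frac{d_\Omega M}{4},
\]
that is, $M\le 4V/d_\Omega$.

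\emph{Conclusion.} Combining the three steps,
\[
{\operatorname{area}}(\Omega)\ge(36\pi)^{1/3}\,V\Bigl(\frac{d_\Omega}{4V}\Bigr)^{1/3}.
\]
Cubing gives ${\operatorname{area}}(\Omega)^3\ge 9\pi\,V^2 d_\Omega$. Since $27\omega_3/4=27\cdot\frac{4\pi}{3}\cdot\frac14=9\pi$, this is exactly the claimed inequality.

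\emph{Main obstacle.} The only delicate point is justifying the coarea inequality in Step 1 for a general convex body. One must check that $\partial\Omega_t$ is contained in $\partial\Omega$ for $t\in(0,d_\Omega)$. One must also check that the Jacobian factor $|\nabla^{\partial\Omega}(x\cdot u)|\le1$ makes the boundary measure dominate the integral of the slice perimeters. Both facts hold because $\partial\Omega$ is Lipschitz, which suffices for the coarea formula. The endpoint slices have measure zero in $t$.
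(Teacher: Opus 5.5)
Your proof is correct and follows the paper's argument: you slice perpendicular to a diameter, apply coarea and the three-dimensional isoperimetric inequality (note $3\omega_3^{1/3}=(36\pi)^{1/3}$), use $\int f^{2/3}\ge V M^{-1/3}$, and finish with the cone comparison $V\ge d_\Omega M/4$. The one difference is that you derive the cone comparison from Brunn's concavity principle, whereas the paper gets it from convexity alone: the two cones joining the maximal slice to the diameter endpoints lie in $\Omega$, so Brunn--Minkowski is not needed.
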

\begin{proof}
Choose diameter endpoints $x,y$, and set $\xi(z)=(z-x)\cdot(y-x)/d_\Omega$, $\Omega_c=\Omega\cap\{\xi=c\}$, and $\operatorname{slice}(c)=\mathcal H^3(\Omega_c)$.
Joining a slice of maximal area $\operatorname{slice}_*$ to $x,y$ gives two cones in $\Omega$, hence $\vol(\Omega)\ge\operatorname{slice}_*d_\Omega/4$.
Since $\xi$ is $1$-Lipschitz and the relative slice boundaries lie in $\partial\Omega$, coarea and the three-dimensional isoperimetric inequality give
\[
  \operatorname{area}(\Omega)\ge3\omega_3^{1/3}\int_0^{d_\Omega}\operatorname{slice}(c)^{2/3}\,dc
  \ge3\omega_3^{1/3}\frac{\vol(\Omega)}{\operatorname{slice}_*^{1/3}}
  \ge3\omega_3^{1/3}\vol(\Omega)^{2/3}\Bigl(\frac{d_\Omega}4\Bigr)^{1/3}.
\]
Cubing proves the claim.
\end{proof}

Theorem~\ref{thm:zonotope-gap} below treats the principal cone.
The generator-length estimate is the four-dimensional instance of the weighted simplex inequality of Hales--Song~\cite[Theorem~2.2]{HalesSong2026}.
We reproduce its determinant argument below.
The remaining input is the sharp three-dimensional theorem~\cite[Theorem~1.1]{CesaroniNovaga2026}.

\begin{theorem}
\label{thm:zonotope-gap}
Let ${P}$ be a full-dimensional zonotopal parallelohedron which, up to translation, has a representation
\[
  P=\sum_{\epsilon\in E_0}c_\epsilon[-L_{\rm gen}\chi_\epsilon/2,L_{\rm gen}\chi_\epsilon/2],\quad
  \text{with }\; E_0\subset E(K_5),\quad c_\epsilon>0,\quad L_{\rm gen}\in \mathrm{GL}(4,\R).
\]
Then
\begin{equation*}
\mathcal I({P})^4\ge4\cdot 5^{1/6}(1+2\sqrt3)^4>2048.
\end{equation*}
Since the hypothesis is invariant under linear maps, the bound holds for every affine image of ${P}$.
In particular, every affine image of the Voronoi cell of a positive-definite form in the closed principal cone satisfies $\mathcal I>2^{11/4}$.
\end{theorem}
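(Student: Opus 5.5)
We would prove Theorem~\ref{thm:zonotope-gap} by reducing the volume and the surface area of the zonotope to sums over the generators, and then comparing the two sums. Write $v_\epsilon=c_\epsilon L_{\rm gen}\chi_\epsilon$ for the generators, so that $P=\sum_\epsilon[-v_\epsilon/2,v_\epsilon/2]$.

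For the volume we use the zonotope volume formula
\[
\vol(P)=\sum_{|T|=4}|\det(v_T)|,
\]
where the sum is over the $4$-element sets $T\subset E_0$. Since $\Gr(K_5)$ is unimodular, each nonzero term equals $|\det L_{\rm gen}|\prod_{\epsilon\in T}c_\epsilon$, and the nonzero terms are exactly the spanning trees $T$ of $K_5$ contained in $E_0$. Hence
\[
\vol(P)=|\det L_{\rm gen}|\sum_{T}c^T .
\]
For the area, each facet of $P$ is a translate of a three-dimensional zonotope spanned by the generators lying in a hyperplane. The area is therefore
\[
\operatorname{area}(P)=2\sum_{\epsilon}\|v_\epsilon\|\,\mathcal H^3(\pi_\epsilon P),
\]
where $\pi_\epsilon$ is the orthogonal projection onto $v_\epsilon^\perp$. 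This is Cauchy's formula applied to zonotopes, summed over the generator directions: the facets containing the directions other than $\epsilon$ pair up and project onto $\pi_\epsilon P$.

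The projection $\pi_\epsilon P$ is itself a three-dimensional zonotope, namely the projection of a parallelohedron along a generator. It is a three-dimensional parallelohedron whose generators correspond to the contraction $K_5/\epsilon$. We therefore apply the sharp three-dimensional inequality~\cite[Theorem~1.1]{CesaroniNovaga2026}:
\[
\mathcal H^2(\partial \pi_\epsilon P)^{3}\ge c_3\,\mathcal H^3(\pi_\epsilon P)^2,
\]
with $c_3$ the truncated-octahedron constant. We use it in the direction that bounds the projected volume from below in terms of the projected data. More usefully, we use the identity
\[
\|v_\epsilon\|\,\mathcal H^3(\pi_\epsilon P)=\vol\Bigl(P\Big|_{\text{trees containing }\epsilon}\Bigr),
\]
that is, the sum of $|\det v_T|$ over the trees $T\ni\epsilon$. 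By unimodularity this equals $\partial_{c_\epsilon}\vol$ times $c_\epsilon$. This reduces the problem to a determinant inequality in which the area is a weighted combination of the volumes of the sub-zonotopes.

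With these reductions we minimize over $L_{\rm gen}$ first. By AM--GM, or a determinant estimate on $\sum_\epsilon \|L\chi_\epsilon\|$-type weights, the optimum over $L_{\rm gen}$ at fixed determinant is controlled by
\[
\prod_\epsilon\bigl(\partial_{c_\epsilon}\log\vol\bigr)^{(\cdot)}.
\]
Stationarity in the lengths $c_\epsilon$ then gives the weighted simplex inequality of~\cite[Theorem~2.2]{HalesSong2026} specialized to $K_5$, with tree-fraction weights $w_\epsilon=c_\epsilon\partial_{c_\epsilon}\vol/\vol$ summing to $4$. Combining this with the three-dimensional bound on each projection yields a product of powers. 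Evaluating at the symmetric point, where all $c_\epsilon$ are equal and $w_\epsilon=2/5$, produces the constant $4\cdot5^{1/6}(1+2\sqrt3)^4$. We then check numerically that this constant exceeds $2048=2^{11}$. The final claims follow from the linear invariance of the hypothesis together with Proposition~\ref{prop:principal-graphic}.

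\textbf{Main obstacle.} The hardest step is making the minimization rigorous. A minimizing sequence could degenerate, with some $c_\epsilon\to0$ or $L_{\rm gen}$ singular. Lemma~\ref{lem:degeneracy} excludes the collapse in diameter and volume. It remains to show that the stationary point is the symmetric one, or else that each boundary stratum, where $E_0$ is a proper subsystem of $\Gr(K_5)$, gives a bound that is still larger. We would handle this by induction over the graphs obtained by deleting edges, using a concavity argument for $\log\vol$ in the variables $\log c$.
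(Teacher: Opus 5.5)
Your reduction breaks at the surface-area formula. The identity $\operatorname{area}(P)=2\sum_\epsilon\|v_\epsilon\|\,\mathcal H^3(\pi_\epsilon P)$ has the wrong dimension. By your own next identity each summand equals $c_\epsilon\partial_{c_\epsilon}\vol(P)$, so by Euler's relation the right side is $8\vol(P)$. With $\ell_\epsilon=c_\epsilon\|L_{\rm gen}\chi_\epsilon\|$, extrusion gives $\partial_{\ell_\epsilon}\vol(P)=\mathcal H^3(\pi_\epsilon P)$ but $\partial_{\ell_\epsilon}\operatorname{area}(P)=\mathcal H^2(\partial\pi_\epsilon P)$. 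The surface sees the \emph{boundary area} of each projection, not its volume. So there is no determinant inequality expressing area through sub-zonotope volumes, and nothing in your outline links $\mathcal H^2(\partial\pi_\epsilon P)$ to $\mathcal H^3(\pi_\epsilon P)$ in a way the three-dimensional theorem can use.

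The endgame is also wrong. The constant $4\cdot5^{1/6}(1+2\sqrt3)^4\approx2077$ is not the value at the symmetric point: the regular permutohedron, with area $320+60\sqrt6$ and volume $125\sqrt5$ at edge $\sqrt2$, has $\mathcal I^4\approx2177$. The constant is attained nowhere. It chains a volume bound that is sharp at the permutohedron with a three-dimensional bound that is sharp only for regular truncated octahedra, and the permutohedron's projections are not regular. Hence proving that the minimizer is symmetric, your declared main obstacle, would not produce the stated constant. Your plan for it also does not work as stated, since $\log\vol$ is a log-sum-exp in $\log c$ and therefore convex, not concave. Finally, ``minimize over $L_{\rm gen}$ first'' is never made precise.

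The paper never locates a minimizer and never optimizes over $L_{\rm gen}$. Normalize $\sum_\epsilon\ell_\epsilon=1$ and write $P=\sum_\epsilon\ell_\epsilon[-n_\epsilon/2,n_\epsilon/2]$ with unit $n_\epsilon$. Cauchy--Binet and Cauchy--Schwarz give $\vol(P)^2\le Z_{\rm tree}\det\Sigma$ with $\Sigma=\sum_\epsilon\ell_\epsilon n_\epsilon n_\epsilon^{\mathsf T}$. AM--GM with $\tr\Sigma=1$ gives $\det\Sigma\le1/256$, and the matrix-tree theorem with AM--GM gives $Z_{\rm tree}\le1/80$. Thus $\vol(P)\le1/(64\sqrt5)$ for every $L_{\rm gen}$. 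This is the Hales--Song simplex inequality, a volume bound that does not use stationarity.

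Next the paper fixes the directions and takes any minimizer of $\mathcal I$ over the lengths. It exists because $\operatorname{diam}P\ge\max_\epsilon\ell_\epsilon\ge1/|E_0|$ on the simplex, and Lemma~\ref{lem:degeneracy} forces $\mathcal I\to\infty$ as the volume degenerates. Degree-zero homogeneity makes stationarity in each active length read $\mathcal H^2(\partial\pi_\epsilon P)=\frac{3s}{4v}\mathcal H^3(\pi_\epsilon P)$, where $s,v$ are the minimizer's area and volume. The projections are three-dimensional parallelohedra, so the sharp bound $\mathcal H^2(\partial\pi_\epsilon P)\ge c_3\mathcal H^3(\pi_\epsilon P)^{2/3}$ with $c_3=3(1+2\sqrt3)/4^{2/3}$ gives $\mathcal H^3(\pi_\epsilon P)\ge(4c_3v/(3s))^3$. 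Averaging with weights $\ell_\epsilon$ and using $\sum_\epsilon\ell_\epsilon\mathcal H^3(\pi_\epsilon P)=4v$ yields $\mathcal I^3\ge(1+2\sqrt3)^3v^{-1/4}$, and the volume bound finishes. Since this holds at every stationary point, strata with smaller $E_0$ need no separate induction.
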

\begin{proof}
Enumerate $E_0=\{\epsilon_1,\ldots,\epsilon_{n_0}\}$ and let $v_i=L_{\rm gen}\chi_{\epsilon_i}$, $r_i=\|v_i\|$, $n_i=v_i/r_i$, and $\ell_i=c_{\epsilon_i}r_i$.
By homogeneity normalize $\sum_i\ell_i=1$, so $P=\sum_i\ell_i[-n_i/2,n_i/2]$.
The common nonzero basis determinant of $(v_i)$ is $D_{\rm gen}=|\det L_{\rm gen}|$, and its bases $\mathcal B$ are the spanning trees on $E_0$.
Set $\ell_J=\prod_{i\in J}\ell_i$, $Z_{\rm tree}=\sum_{J\in\mathcal B}\ell_J$, and $\Sigma=\sum_i\ell_i n_in_i^{\mathsf T}$.
The volume formula, Cauchy--Binet, and Cauchy--Schwarz give
\[
 (\vol P)^2
 =\left(\sum_{J\in\mathcal B}\frac{\ell_JD_{\rm gen}}{\prod_{i\in J}r_i}\right)^2
 \le Z_{\rm tree}\det\Sigma\le\frac{Z_{\rm tree}}{256},
\]
where the last step uses $\tr\Sigma=1$.
Extended by zero to $K_5$, the weights give a Laplacian of trace $2$.
The matrix-tree theorem and AM--GM yield $Z_{\rm tree}=\frac15\prod_{j=1}^4\lambda_j\le1/80$ for its positive eigenvalues.
Hence
\begin{equation*}
 \vol(P)\le\frac1{64\sqrt5},\quad\text{with }\sum_i\ell_i=1.
\end{equation*}

Positive length changes preserve tiling, and on the simplex, $\operatorname{diam}P\ge\max_i\ell_i\ge1/n_0$.
Lemma~\ref{lem:degeneracy} therefore gives $\mathcal I(P)\to\infty$ as $\vol(P)\to0$ and a full-dimensional minimizer $P_{\rm min}$, stationary in every active length.
Its active projections also tile.
To see this, quotient a unimodular basis containing the generator and combine parallel directions.
Every independent quotient triple extends to a basis~\cite{McMullen1975,DezaGrishukhin2004}.
Orthogonal projection differs by an invertible linear map.

Write $v_{\rm min},s_{\rm min}$ for its volume and area, and $v_{{\rm proj},i},s_{{\rm proj},i}$ for those of its projection onto $n_i^\perp$.
Extrusion gives $\partial_i v_{\rm min}=v_{{\rm proj},i}$ and $\partial_i s_{\rm min}=s_{{\rm proj},i}$.
Degree-zero homogeneity makes the Lagrange multiplier vanish.
Stationarity and the sharp three-dimensional bound~\cite[Theorem~1.1]{CesaroniNovaga2026} give
\[
 s_{{\rm proj},i}=\frac{3s_{\rm min}}{4v_{\rm min}}v_{{\rm proj},i}
 \ge c_3v_{{\rm proj},i}^{2/3},
 \quad\text{where }c_3=\frac{3(1+2\sqrt3)}{4^{2/3}}.
\]
Thus $v_{{\rm proj},i}\ge(4c_3v_{\rm min}/(3s_{\rm min}))^3$.
Sum with weights $\ell_i$, using $\sum_i\ell_i=1$ and $\sum_i\ell_i v_{{\rm proj},i}=4v_{\rm min}$, to obtain $\mathcal I(P_{\rm min})^3\ge(1+2\sqrt3)^3/v_{\rm min}^{1/4}$.
Raising to $4/3$ and using the normalized volume bound proves the claim.
Finally, $5^{1/6}>13/10$, $\sqrt3>19/11$, and $(1+2\sqrt3)^4=217+104\sqrt3$ put it above $113438/55>2048$.
\end{proof}
\section{Cofactor balance and compensated entropy}\label{sec:entropy-framework}

We separate the comparison into an algebraic statement valid in every dimension and a geometric estimate for the admissible four-dimensional configurations.
The facet lengths determine the entropy penalty, and rank-one variation then identifies the contribution of projected cofactors and the remaining exchange cost.

\subsection{Cofactor balance and the exact surface formula}

Let $\Lambda\subset\R^d$ be a lattice of covolume $v_\Lambda$, set
\[
  \mathcal C_H=\{x:v\cdot x\le v^{\mathsf T}Hv/2\text{ for all }v\in\Lambda\}
  \quad\text{and}\quad
    V_H=H^{-1/2}\mathcal C_H
\]
for $H>0$.
Then $V_H=\operatorname{Vor}(H^{1/2}\Lambda)$ has volume $v_\Lambda\sqrt{D_H}$, where $D_H=\det H$.
Choose one vector $v$ from each opposite facet pair, and let $b_e$ be the area of its facet in $\mathcal C_H$.
Define
\[
  \ell_e=\|v\|,\qquad e=v/\ell_e,\qquad w_e=\ell_e b_e/v_\Lambda.
\]
Write $\Pi_e=ee^{\mathsf T}$, $h_e=e^{\mathsf T}He$, and $n_e=H^{1/2}e/\sqrt{h_e}$.
Give absent facets weight zero.

\begin{lemma}
\label{lem:universal-cofactors}
For every $H>0$,
\[
  \sum_e w_e\Pi_e=\operatorname{adj}H.
\]
For a reference length $\ell_0>0$, set $\mathcal I_0=2dv_\Lambda^{1/d}/\ell_0$ and $\kappa_e=(\ell_e/\ell_0)^2$.
Then
\begin{equation}
\label{eq:universal-surface-formula}
\frac{\mathcal I_d(LV_H)}{\mathcal I_0}
=\frac{D_H^{-1+1/(2d)}}d\sum_e\kappa_e^{-1/2} w_e\|L^{-\mathsf T}H^{1/2}e\|,
\quad\;\text{with }|\det L|=1.
\end{equation}
\end{lemma}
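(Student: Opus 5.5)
The plan is to obtain the cofactor identity by differentiating the volume of $\mathcal C_H$ in $H$, and then to derive the surface formula from the pyramid formula together with the transformation law of facet areas under a linear map.

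\emph{Cofactor identity.} First compute $\vol(\mathcal C_H)$ in two ways. On one hand, $\mathcal C_H=H^{1/2}V_H$, so $\vol(\mathcal C_H)=\sqrt{D_H}\,\vol(V_H)=v_\Lambda D_H$. On the other hand, $\mathcal C_H$ is a polytope with support numbers $p_v=v^{\mathsf T}Hv/(2\ell_v)$ in the directions $v/\ell_v$, and the facet areas $b_e$ appear in both facets of each opposite pair. The first variation of the volume of a polytope with respect to its support numbers is the facet area, $\partial\vol/\partial p=$ facet area. This holds even when the combinatorial type changes, since new facets are born with zero area. So for a symmetric perturbation $\dot H$,
\[
 \frac{d}{dt}\vol(\mathcal C_{H+t\dot H})=\sum_{\text{facet pairs}}2b_e\frac{v^{\mathsf T}\dot Hv}{2\ell_e}=\sum_e b_e\ell_e\, e^{\mathsf T}\dot He .
\]
Comparing with $\frac{d}{dt}(v_\Lambda D_H)=v_\Lambda\tr(\operatorname{adj}H\,\dot H)$ gives $\langle \sum_e w_e\Pi_e-\operatorname{adj}H,\dot H\rangle=0$ for all symmetric $\dot H$. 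Since both sides are symmetric, they are equal. The one point needing care, and the main obstacle, is differentiability: $\vol$ is $C^1$ in the support vector on the open set where the polytope is full-dimensional, with gradient given by the facet areas. This is the Minkowski first-variation formula and holds across changes of type, so it suffices to invoke it.

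\emph{Surface formula.} The facet of $LV_H=LH^{-1/2}\mathcal C_H$ corresponding to $v$ is the image of the facet of $\mathcal C_H$ with normal $e$ under $M=LH^{-1/2}$. For a hyperplane piece with unit normal $e$, the area scales by $|\det M|\,\|M^{-\mathsf T}e\|$. Here $|\det M|=D_H^{-1/2}$ and $M^{-\mathsf T}=L^{-\mathsf T}H^{1/2}$. Hence
\[
\mathcal H^{d-1}(\partial LV_H)=2D_H^{-1/2}\sum_e b_e\|L^{-\mathsf T}H^{1/2}e\| .
\]
Substitute $b_e=v_\Lambda w_e/\ell_e=v_\Lambda w_e/(\ell_0\kappa_e^{1/2})$ and divide by $\vol(LV_H)^{(d-1)/d}=(v_\Lambda D_H^{1/2})^{(d-1)/d}$. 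The prefactor becomes
\[
2v_\Lambda^{1/d}\ell_0^{-1}\,D_H^{-1/2-(d-1)/(2d)}=\frac{\mathcal I_0}{d}\,D_H^{-1+1/(2d)},
\]
which is \eqref{eq:universal-surface-formula}.

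The final step is routine exponent bookkeeping. Also note that absent facets with $w_e=0$ contribute nothing to either formula, so the sums may run over any finite set of lattice directions containing the facet vectors.
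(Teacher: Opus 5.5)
Your proof is correct. The surface formula is handled as in the paper, but the cofactor identity takes a different route.

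\textbf{Cofactor identity.} The paper works at a fixed $H$. Reflection through $H^{1/2}v/2$ swaps adjacent Voronoi cells, so each facet of $\mathcal C_H$ has centroid $Hv/2$. The divergence identity $\int_{\partial\mathcal C_H}x\otimes n(x)\,dS=\vol(\mathcal C_H)\Id=v_\Lambda D_H\Id$ then reads $\sum_e b_e\ell_e H\Pi_e=v_\Lambda D_H\Id$, and multiplying by $H^{-1}$ finishes.

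You instead differentiate the global identity $\vol(\mathcal C_H)=v_\Lambda D_H$ in $H$. You use two facts: the support numbers are linear in $H$, and the gradient of volume in the support numbers is the vector of face areas. This removes the need for the facet-centroid fact and shows cofactor balance as the $H$-gradient of the volume identity. The cost is a regularity input: the volume of $\{x:u_i\cdot x\le p_i\}$ is $C^1$ in $p$ on the full-dimensional region, with partial derivatives equal to face areas (zero for redundant constraints), including across changes of combinatorial type. That is true and standard.

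You should also state explicitly that, for $H$ in a neighbourhood of a fixed $H_0>0$, only finitely many $v\in\Lambda$ give non-slack constraints. This matters because $\mathcal C_H$ is defined by infinitely many inequalities, while the first-variation formula is for finitely many. The paper's pointwise computation needs neither input.

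\textbf{Surface formula.} You use the single map $M=LH^{-1/2}$ with area factor $|\det M|\,\|M^{-\mathsf T}e\|=D_H^{-1/2}\|L^{-\mathsf T}H^{1/2}e\|$. The paper uses the two-step factors $\sqrt{h_e}/\sqrt{D_H}$ and $\|L^{-\mathsf T}n_e\|$, whose product is the same. Your exponent bookkeeping is correct.
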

\begin{proof}
Reflection about $H^{1/2}v/2$ interchanges adjacent Voronoi cells, so the facet in $\mathcal C_H$ is centered at $Hv/2$.
The divergence identity, with outward unit normal $n(x)$, gives
\[
  \sum_e b_e\ell_e H\Pi_e
  =\int_{\partial\mathcal C_H}x\otimes n(x)\,dS=v_\Lambda D_H \Id.
\]
Multiplication by $H^{-1}$ proves cofactor balance.
The maps $H^{-1/2}$ and $L$ multiply facet area by $\sqrt{h_e}/\sqrt{D_H}$ and $\|L^{-\mathsf T}n_e\|$, respectively.
Summing opposite pairs and normalizing by volume gives the surface formula.
\end{proof}

For a positive-definite matrix $A$ and a unit vector $e$, write
\[
  \operatorname{ent}_e(A)=e^{\mathsf T}A\log A\,e-(e^{\mathsf T}Ae)\log(e^{\mathsf T}Ae)
  \quad\text{and}\quad
  \operatorname{var}_e(A)=e^{\mathsf T}A^2e-(e^{\mathsf T}Ae)^2.
\]
These nonnegative quantities extend continuously to positive semidefinite matrices.

\subsection{Cofactor entropy and the exact deficit}

The following statement is algebraic.
In an application, its weights and attenuation factors must first be identified with the actual facet data.
In particular, cofactor balance alone does not imply that the compensated entropy below is nonnegative.

\begin{theorem}
\label{lem:balanced-entropy-bridge}
Let $d\ge2$, let $H>0$ be a real symmetric $d\times d$ matrix, and let $\mathcal N$ be a finite indexed family of unit vectors with nonnegative weights satisfying cofactor balance.
Write
\[
 {D_H}:=\det H\quad\text{and}\quad \sum_{e\in\mathcal N}w_e\Pi_e=\operatorname{adj}H.
\]
Let $\kappa_e>0$, and define
\[
 h_e=e^{\mathsf T}He,\quad
 \lambda_e=\frac{w_eh_e}{{D_H}},\quad
 E=\frac1{D_H}\sum_e w_e\operatorname{ent}_e(H),\quad
 F=E-\sum_e\lambda_e\log\kappa_e,
\]
\[
 W_\kappa=\sum_e\frac{w_e}{\sqrt{\kappa_eh_e}}\Pi_e ,
 \quad\text{and}\quad j_\kappa=\frac{\det W_\kappa}{{D_H}^{d-3/2}}.
\]
Write $\mathcal B$ for the $d$-element bases of positive-weight vectors in $\mathcal N$, and set
\[
 \mu(B)=\frac{c_B}{{D_H}^{d-1}}\quad\text{and}\quad
 \nu(B)=\frac{c_B\prod_{e\in B}(\kappa_eh_e)^{-1/2}}
                    {\det W_\kappa},
\]
where $c_B=\det[e:e\in B]^2\prod_{e\in B}w_e$.
These are probability laws with the same support, and we use $\operatorname{KL}(\mu\|\nu)=\sum_B\mu(B)\log(\mu(B)/\nu(B))$ for their relative entropy.
Then
\begin{equation*}
 E=\log {D_H}-\sum_e\lambda_e\log h_e\ge0
 \quad\text{and}\quad
 \log j_\kappa=\frac F2+\operatorname{KL}(\mu\|\nu).
\end{equation*}
For $|\det L|=1$, define
\[
 \operatorname{surf}_H(L)=\frac{{D_H}^{-1+1/(2d)}}d
       \sum_e\kappa_e^{-1/2}w_e
                         \|L^{-\mathsf T}H^{1/2}e\|.
\]
Write $L^{-\mathsf T}=US$ with $S>0$ and $\det S=1$, and define
\[
 n_e=\frac{H^{1/2}e}{\sqrt{h_e}},\quad
 w_e^{\mathrm{surf}}=\frac{w_e\sqrt{h_e}}{\sqrt{\kappa_e}},\quad
 \widehat W_\kappa=\sum_e w_e^{\mathrm{surf}}\Pi_{n_e}=H^{1/2}W_\kappa H^{1/2},
\]
\[
 T=\frac{\widehat W_\kappa^{1/2}S\widehat W_\kappa^{1/2}}
                {(\det\widehat W_\kappa)^{1/d}},\quad\text{and}\quad
 g_{\mathrm{ang}}=\frac{\sum_e w_e^{\mathrm{surf}}(\|Sn_e\|-n_e^{\mathsf T}Sn_e)}
                  {\operatorname{tr}(S\widehat W_\kappa)}.
\]
Then $\det T=1$, $g_{\mathrm{ang}}\ge0$, and the exact identity
\begin{equation*}
 \log\operatorname{surf}_H(L)
 =\frac{F}{2d}+\frac{\operatorname{KL}(\mu\|\nu)}d
       +\log\frac{\operatorname{tr}T}{d}+\log(1+g_{\mathrm{ang}})
\end{equation*}
decomposes the surface deficit into compensated entropy and three nonnegative terms.
In particular,
\[
 \operatorname{surf}_H(L)\ge j_\kappa^{1/d}\ge\exp(F/(2d)).
\]
\end{theorem}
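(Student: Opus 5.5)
The plan is to verify each claimed identity by direct linear algebra, using cofactor balance $\sum_e w_e\Pi_e=\operatorname{adj}H=D_HH^{-1}$ as the only input. First I would check that $\mu$ and $\nu$ are probability laws. By Cauchy--Binet applied to the rank-one sum,
\[
\det\Bigl(\sum_e a_e\Pi_e\Bigr)=\sum_{B\in\mathcal B}\det[e:e\in B]^2\prod_{e\in B}a_e .
\]
With $a_e=w_e$ this gives $\sum_Bc_B=\det\operatorname{adj}H=D_H^{d-1}$. With $a_e=w_e(\kappa_eh_e)^{-1/2}$ it gives $\det W_\kappa=\sum_Bc_B\prod_{e\in B}(\kappa_eh_e)^{-1/2}$. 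Both laws are supported on the bases with $c_B>0$, so they share the same support.

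For the entropy identity I would note that $\sum_ew_e\,e^{\mathsf T}H\log H\,e=\tr(\operatorname{adj}H\,H\log H)=D_H\tr\log H=D_H\log D_H$. Subtracting $\sum_ew_eh_e\log h_e$ and dividing by $D_H$ gives $E=\log D_H-\sum_e\lambda_e\log h_e$. Nonnegativity holds termwise: $\operatorname{ent}_e(H)\ge0$ is Jensen's inequality for the convex function $t\log t$ against the spectral probability measure of $H$ in the state $e$. As a by-product, $\sum_e\lambda_e=\tr(H\operatorname{adj}H)/D_H=d$.

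The key step, and the one I expect to need the most care, is the formula for $\log j_\kappa$. Set $r(B)=\prod_{e\in B}(\kappa_eh_e)^{-1/2}$, so that $\nu=\mu r/\mathbb E_\mu r$ and hence $\log\mathbb E_\mu r=\mathbb E_\mu\log r+\operatorname{KL}(\mu\|\nu)$. This requires the marginal identity
\[
\mu(\{B\ni e\})=\lambda_e .
\]
I would prove it as the standard leverage-score fact. Differentiate $\log\det\bigl(\sum_ea_e\Pi_e\bigr)$ in $a_e$ at $a=w$: the Cauchy--Binet side gives $\mu(B\ni e)/w_e$, and the matrix side gives $e^{\mathsf T}(\operatorname{adj}H)^{-1}e=h_e/D_H$. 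Repeated vectors in the indexed family cause no trouble, since they are handled as separate indices. Then
\[
\mathbb E_\mu\log r=-\tfrac12\sum_e\lambda_e\log(\kappa_eh_e)=\tfrac12(F-\log D_H).
\]
Combining this with $\mathbb E_\mu r=\det W_\kappa/D_H^{d-1}$ yields $\log j_\kappa=F/2+\operatorname{KL}(\mu\|\nu)$.

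For the surface decomposition I would use $\|L^{-\mathsf T}H^{1/2}e\|=\sqrt{h_e}\,\|USn_e\|=\sqrt{h_e}\,\|Sn_e\|$. This gives
\[
\operatorname{surf}_H(L)=\frac{D_H^{-1+1/(2d)}}{d}\sum_ew_e^{\mathrm{surf}}\|Sn_e\|
=\frac{D_H^{-1+1/(2d)}}{d}\tr(S\widehat W_\kappa)(1+g_{\mathrm{ang}}),
\]
using $\sum_ew_e^{\mathrm{surf}}n_e^{\mathsf T}Sn_e=\tr(S\widehat W_\kappa)$. The equality $\widehat W_\kappa=H^{1/2}W_\kappa H^{1/2}$ is immediate from $n_e=H^{1/2}e/\sqrt{h_e}$, so $\det\widehat W_\kappa=D_H\det W_\kappa$. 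Since $\tr(S\widehat W_\kappa)=(\det\widehat W_\kappa)^{1/d}\tr T$, collecting the powers of $D_H$ turns the prefactor into $j_\kappa^{1/d}\tr T/d$. Taking logarithms and inserting the formula for $\log j_\kappa$ gives the exact identity.

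The nonnegative terms are then routine. We have $\det T=\det S=1$, so $\tr T/d\ge1$ by AM--GM on the eigenvalues of $T$. Also $g_{\mathrm{ang}}\ge0$, since $\|Sn_e\|\ge n_e^{\mathsf T}Sn_e$ by Cauchy--Schwarz for unit $n_e$. Finally $\operatorname{KL}(\mu\|\nu)\ge0$, which gives the chain $\operatorname{surf}_H(L)\ge j_\kappa^{1/d}\ge\exp(F/(2d))$.
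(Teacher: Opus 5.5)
Your proposal is correct and follows the paper's proof essentially step for step. The shared steps are: Cauchy--Binet to normalize $\mu$ and $\nu$; tracing balance against $H\log H$, with spectral Jensen giving $E\ge0$; the marginal identity $\Pr_\mu(e\in B)=\lambda_e$, obtained by differentiating $\log\det\bigl(\sum_f w_f\Pi_f\bigr)$ in a single weight; and the factorization $\operatorname{surf}_H(L)=j_\kappa^{1/d}\frac{\operatorname{tr}T}{d}(1+g_{\mathrm{ang}})$ via $\det\widehat W_\kappa=D_H\det W_\kappa$. Writing $\nu=\mu r/\mathbb E_\mu r$ is only a tidier way of organizing the same relative-entropy computation.
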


\begin{proof}
Tracing cofactor balance against $H$ and $H\log H$ gives
\[
 \sum_e\lambda_e=d\quad\text{and}\quad
 \sum_e w_e e^{\mathsf T}H\log H\,e={D_H}\log {D_H}.
\]
Spectral Jensen gives $E\ge0$, and Cauchy--Binet normalizes $\mu,\nu$ since $\operatorname{adj}H>0$.
Differentiating the weights independently before imposing balance gives
\[
 \Pr_\mu(e\in B)
 =w_e\frac{\partial}{\partial w_e}
       \log\det\Bigl(\sum_f w_f\Pi_f\Bigr)
 =w_e e^{\mathsf T}(\operatorname{adj}H)^{-1}e
 =\lambda_e.
\]
Hence
\[
 \operatorname{KL}(\mu\|\nu)
 =\log\det W_\kappa-(d-1)\log {D_H}
                  +\frac12\sum_e\lambda_e\log(\kappa_eh_e)
 =\log j_\kappa-\frac F2,
\]
which proves the entropy identities.

For unit $n$, Cauchy--Schwarz gives $\|Sn\|\ge n^{\mathsf T}Sn>0$, hence $g_{\mathrm{ang}}\ge0$.
Since $\det\widehat W_\kappa=D_H\det W_\kappa$, the surface formula becomes
\[
 \operatorname{surf}_H(L)
   =j_\kappa^{1/d}\frac{\operatorname{tr}T}{d}(1+g_{\mathrm{ang}}).
\]
Taking logarithms and using $\operatorname{tr}T\ge d$ proves the decomposition and bounds.
\end{proof}

\begin{remark}
The three remainders in the exact deficit decomposition vanish precisely under the following respective conditions:
\[
 \prod_{e\in B}\kappa_eh_e\ \hbox{is independent of }B\in\mathcal B,
 \quad\; T=\Id,
 \quad\; Sn_e\in\mathbb Rn_e \;\text{ with }\; w_e>0.
\]
The first follows from $\operatorname{KL}(\mu\|\nu)=0$ if and only if $\mu=\nu$, while the last is equality in Cauchy--Schwarz.
Moreover $E=0$ precisely when every positive-weight $e$ is an eigenvector of $H$.
If their graph of nonorthogonal pairs is connected, this forces $H$ to be scalar.

If $H=c\Id$ and $\kappa_e=1$ whenever $w_e>0$, cofactor balance gives $E=F=\log j_\kappa=0$ and $\widehat W_\kappa=c^{d-1/2}\Id$.
Then $T=S$, so $\operatorname{surf}_H(L)\ge1$, with equality precisely when $S=\Id$, equivalently when $L$ is orthogonal.

For the facet marks and reference $\mathcal I_0$ in Lemma~\ref{lem:universal-cofactors}, $\operatorname{surf}_H(L)=\mathcal I_d(LV_H)/\mathcal I_0$.
\end{remark}

\begin{corollary}
\label{cor:exact-affine-coercivity}
In Theorem~\ref{lem:balanced-entropy-bridge}, let $r_{\mathrm{aff}}=\log\operatorname{surf}_H(L)-F/(2d)\ge0$.
For a constant $C_d$ depending only on $d$,
\begin{equation*}
 \|\log T\| \le C_d(\sqrt{r_{\mathrm{aff}}}+{r_{\mathrm{aff}}}) \quad\text{and}\quad
 \|\log S+\tf(\log\widehat W_\kappa)\|_{\mathrm{op}}
 \le\|\log T\|_{\mathrm{op}}.
\end{equation*}
Thus the trace remainder controls the affine strain about the determinant-normalized inverse facet matrix, for every volume-preserving $L$.
If $F\ge0$, the right side can instead be bounded using $\log\operatorname{surf}_H(L)$.
\end{corollary}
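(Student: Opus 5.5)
From the exact identity of Theorem~\ref{lem:balanced-entropy-bridge}, the remainder is
\[
r_{\mathrm{aff}}=\frac{\operatorname{KL}(\mu\|\nu)}d+\log\frac{\tr T}{d}+\log(1+g_{\mathrm{ang}}).
\]
All three terms are nonnegative, so $\log(\tr T/d)\le r_{\mathrm{aff}}$. The first estimate is therefore a statement about positive-definite $T$ with $\det T=1$: a small trace excess controls $\log T$.

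Write $T=\exp(X)$ with $X$ symmetric and $\tr X=0$, and let $x_1,\dots,x_d$ be the eigenvalues of $X$. Then
\[
\frac{\tr T}{d}=\frac1d\sum_i e^{x_i}.
\]
Use the convexity bound $e^{x}\ge 1+x+\phi(x)$, where $\phi(x)=x^2/2$ for $x\ge0$ and $\phi(x)=c|x|$-type growth (e.g.\ $e^x-1-x\ge \min(x^2,|x|)/C$ for all real $x$). Since $\sum x_i=0$, this gives
\[
\frac{\tr T}{d}-1\ge \frac1{Cd}\sum_i\min(x_i^2,|x_i|).
\]
If every $|x_i|\le1$, then $\|X\|^2\le Cd(e^{r}-1)$. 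Otherwise some $|x_i|\ge1$, and then $\max|x_i|\le Cd(e^{r}-1)$. In that case $\|X\|\le\sqrt d\max|x_i|$ controls everything linearly.

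It remains to convert $e^{r}-1$ into $r$. For $r\le1$ we have $e^r-1\le Cr$, which gives the bound $\sqrt r+r$. For $r\ge1$, $e^r$ is too big, so the convexity argument needs refining. I would instead use $\log(\frac1d\sum e^{x_i})\ge \frac1d\log e^{\max x_i}$, which gives $\max x_i\le d\,r+\log d$. Since $\sum x_i=0$ and $\max x_i\ge |\min x_i|/(d-1)$, every $|x_i|\le C_d(1+r)\le 2C_d r$. This step is the main care point: the regime switch must keep the bound linear in $r$.

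For the second inequality, set $\widehat W=\widehat W_\kappa$ and $A=\log\widehat W$. Then
\[
S=(\det\widehat W)^{1/d}\,\widehat W^{-1/2}\,T\,\widehat W^{-1/2}.
\]
When $\widehat W$ and $T$ commute, $\log S=\log T-\tf A$ exactly, and the claim holds with equality in norm. In general I would use a geodesic argument on the symmetric space of positive-definite matrices.

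The map $M\mapsto \widehat W^{-1/2}M\widehat W^{-1/2}$ is an isometry of the affine-invariant metric, and it sends $\Id$ to $\widehat W^{-1}$ and $T$ to $S/(\det\widehat W)^{1/d}$. Hence
\[
\delta\bigl(S,\;(\det\widehat W)^{1/d}\widehat W^{-1}\bigr)=\delta(T,\Id)=\|\log T\|.
\]
The operator norm version needs the Thompson metric $\|\log(M^{-1/2}NM^{-1/2})\|_{\mathrm{op}}$, which is also congruence-invariant. Writing $c=(\det\widehat W)^{1/d}$, this gives
\[
\|\log(c^{-1/2}\widehat W^{1/2}\,S\,\widehat W^{1/2}c^{-1/2})\|_{\mathrm{op}}=\|\log T\|_{\mathrm{op}}.
\]
Then apply the exponential metric increasing property for the Thompson metric: $\|\log M-\log N\|_{\mathrm{op}}\le d_T(M,N)$, the Segal/Lawson--Lim type inequality. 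With $M=S$ and $N=c\,\widehat W^{-1}$, and using $\log N=-\tf A$ because $\det S=1$, this yields
\[
\|\log S+\tf A\|_{\mathrm{op}}\le\|\log T\|_{\mathrm{op}}.
\]
The key input to verify is that this log-Lipschitz property holds in the operator norm. Its standard proof uses operator monotonicity of $\log$: $N\le e^{t}M$ implies $\log N\le t+\log M$.

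For the final remark, if $F\ge0$ then $r_{\mathrm{aff}}\le\log\operatorname{surf}_H(L)$. Since the bound above is monotone in $r_{\mathrm{aff}}$, we may substitute $\log\operatorname{surf}_H(L)$ for $r_{\mathrm{aff}}$.
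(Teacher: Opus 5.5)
Your proof is correct and follows the paper's route. You isolate $\log(\tr T/d)\le r_{\mathrm{aff}}$ from the exact identity, and you control the eigenvalue logarithms of $T$ by exponential convexity with a split at $r=1$, using $\max_i x_i\le r+\log d$ and $|x_i|\le(d-1)\max_i x_i$ for large $r$. Your Thompson-metric step is exactly the paper's sandwich $e^{-a}S_0\le S\le e^{a}S_0$ with $S_0=(\det\widehat W_\kappa)^{1/d}\widehat W_\kappa^{-1}$, followed by operator monotonicity of $\log$.

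One harmless slip: $\log\bigl(\frac1d\sum_i e^{x_i}\bigr)\ge\frac1d\max_i x_i$ is false for small $x$ (take $d=2$, $x=(m,-m)$, where $\log\cosh m<m/2$). The correct bound $\log\bigl(\frac1d\sum_i e^{x_i}\bigr)\ge\max_i x_i-\log d$ still gives your conclusion, in the sharper form $\max_i x_i\le r+\log d$.
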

\begin{proof}
Let $z_i$ be the eigenvalue logarithms of $T$ and $r=\log(\operatorname{tr}T/d)$.
Then $\sum_i z_i=0$ and $0\le r\le r_{\mathrm{aff}}$.
For $m=\max_i z_i$, Jensen, the linear and quadratic exponential bounds give
\[
 de^r\ge e^m+(d-1)e^{-m/(d-1)}\ge d+\frac{m^2}{2}
 \quad\text{and}\quad m\le r+\log d.
\]
Together with $\sum_i z_i^2\le d(d-1)m^2$, these bounds give $\|\log T\|\le C_d(\sqrt r+r)$, using $r\le1$ and $r\ge1$ separately.

Set $S_0=(\det\widehat W_\kappa)^{1/d}\widehat W_\kappa^{-1}$ and $a=\|\log T\|_{\mathrm{op}}$.
Since $T=S_0^{-1/2}SS_0^{-1/2}$, $e^{-a}S_0\le S\le e^aS_0$.
The logarithm preserves matrix order by its integral representation and the order reversal of inversion.
Thus $-a\Id\le\log S-\log S_0\le a\Id$, where $\log S_0=-\tf(\log\widehat W_\kappa)$.
\end{proof}

\subsection{Marked entropy under a rank-one change}

The next identity requires both cofactor balance and the specified dependence of the individual weights on the chosen rank-one parameter.
These hypotheses are satisfied by the explicit facet cofactors in the two nonprincipal cones.

\begin{lemma}\label{lem:marked-rank-one}
Fix a unit vector $a$, a positive semidefinite $H_-$, and $H(R)=H_-+R\Pi_a>0$ on an open interval of positive $R$.
Suppose a fixed finite family $\mathcal N$ of unit vectors has nonnegative cofactor weights satisfying balance, with $w_e=R\omega_e+\zeta_e$ for $e\perp a$ and $w_e$ independent of $R$ otherwise.
Here $\omega_e,\zeta_e\ge0$ are fixed.
Fix marks $\kappa_e>0$ and use the entropy notation of Theorem~\ref{lem:balanced-entropy-bridge}.
In the orthogonal splitting $\mathbb Ra\oplus a^\perp$, write
\[
 {H_-}=\begin{pmatrix}g_{00}&g^{\mathsf T}\\g&{H_\perp}\end{pmatrix},
 \quad {D_\perp}=\det {H_\perp},\quad
 {h_{\mathrm S}}=g_{00}-g^{\mathsf T}{H_\perp}^{-1}g.
\]
Thus ${H_\perp}>0$, ${h_{\mathrm S}}\ge0$, $\det {H_-}={D_\perp}{h_{\mathrm S}}$, and ${D_H}={D_\perp}(R+{h_{\mathrm S}})$.
For $e\not\perp a$, set
\[
 \alpha_e=(e\cdot a)^2,\qquad
 \beta_e=\frac{e^{\mathsf T}{H_-}e}{\alpha_e},\qquad
 p_e^{\mathrm{dir}}=\frac{w_e\alpha_e}{{D_\perp}}.
\]
Then $p^{\mathrm{dir}}$ is a probability law and $\beta_e\ge{h_{\mathrm S}}$.
For perpendicular normals set $h_e=e^{\mathsf T}{H_\perp}e$ and
\[
 E_\perp=\sum_{e\perp a}\omega_e\operatorname{ent}_e({H_\perp})
 \quad\text{and}\quad
 E_{\mathrm{pen},\perp}=\sum_{e\perp a}\omega_eh_e\log\kappa_e.
\]
The compensated quantity $F_{\mathrm{det}}(R)={D_H}(R)F(R)$ satisfies the exact identity
\[
 F_{\mathrm{det}}'(R)
 =E_\perp-E_{\mathrm{pen},\perp}
   -{D_\perp}\sum_{e\not\perp a}p_e^{\mathrm{dir}}\log(\alpha_e\kappa_e)
 -{D_\perp}\sum_{e\not\perp a}p_e^{\mathrm{dir}}
                    \log\frac{R+\beta_e}{R+{h_{\mathrm S}}}.
\]
If $\alpha_e\kappa_e=e^{-b}$ on positive-weight nonperpendicular normals, the angular term is $D_\perp b$.

Define
\[
 {N_{\perp,0}}=\sum_{e\perp a}\zeta_eh_e\quad\text{and}\quad
 Q=R{D_\perp}-(d-2)\det {H_-}+{N_{\perp,0}}.
\]
One always has $Q\le {D_H}$ and
\[
 \sum_{e\not\perp a}p_e^{\mathrm{dir}}
       \frac{R+\beta_e}{R+{h_{\mathrm S}}}=2-\frac Q{D_H}.
\]
Thus Jensen bounds the final logarithmic average by $\log(2-Q/D_H)$.
No lower bound for $Q$ is asserted.
\end{lemma}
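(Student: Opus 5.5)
We prove the lemma by direct differentiation, using three consequences of cofactor balance: a $\perp$-block identity, the $aa$-cofactor, and the trace identity $\sum_e\lambda_e=d$.

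\textbf{Step 1: a closed form for $F_{\mathrm{det}}$.} Multiply the formulas of Theorem~\ref{lem:balanced-entropy-bridge} by $D_H$. Since $E=\log D_H-\sum_e\lambda_e\log h_e$ and $D_H\lambda_e=w_eh_e$, we get
\[
 F_{\mathrm{det}}(R)=D_H\log D_H-\sum_e w_eh_e\log(h_e\kappa_e).
\]
This form is convenient because the $R$-dependence is explicit:
\begin{itemize}
\item $D_H=D_\perp(R+h_{\mathrm S})$ is affine in $R$, by the Schur complement in the splitting $\mathbb Ra\oplus a^\perp$.
\item For $e\perp a$, $h_e=e^{\mathsf T}H_\perp e$ is constant and $w_e'=\omega_e$.
\item For $e\not\perp a$, $w_e$ is constant and $h_e=\alpha_e(R+\beta_e)$, so $h_e'=\alpha_e$.
\end{itemize}

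\textbf{Step 2: two consequences of balance.}

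First, pair the balance identity with $a$. This gives $\sum_{e\not\perp a}w_e\alpha_e=a^{\mathsf T}\operatorname{adj}H\,a=D_\perp$, so $p^{\mathrm{dir}}$ is a probability law.

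Second, differentiate the balance identity in $R$. This gives $\sum_{e\perp a}\omega_e\Pi_e=\frac{d}{dR}\operatorname{adj}H$. The left side is supported on the block $a^\perp\otimes a^\perp$. The $R$-coefficient of a cofactor indexed inside $a^\perp$ is the corresponding cofactor of $H_\perp$. Hence
\[
 \sum_{e\perp a}\omega_e\Pi_e=\operatorname{adj}H_\perp\oplus0 .
\]
Tracing this against $H_\perp\log H_\perp$ gives $\sum_{e\perp a}\omega_e\,e^{\mathsf T}H_\perp\log H_\perp\,e=D_\perp\log D_\perp$. Therefore
\[
 E_\perp=D_\perp\log D_\perp-\sum_{e\perp a}\omega_eh_e\log h_e .
\]
Tracing against $H_\perp$ instead gives $\sum_{e\perp a}\omega_eh_e=(d-1)D_\perp$.

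Finally, $\beta_e\ge h_{\mathrm S}$: minimizing $x^{\mathsf T}H_-x$ over $x$ with fixed component $x\cdot a=1$ gives the Schur complement $h_{\mathrm S}$, and $\beta_e$ is this quadratic form evaluated at $e/(e\cdot a)$.

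\textbf{Step 3: the derivative.} Differentiating Step 1 gives
\[
 F_{\mathrm{det}}'=D_\perp\log D_H+D_\perp-\sum_{e\perp a}\omega_eh_e\log(h_e\kappa_e)-\sum_{e\not\perp a}w_e\alpha_e\bigl(\log(h_e\kappa_e)+1\bigr).
\]
The two constant terms cancel, by $\sum_{e\not\perp a}w_e\alpha_e=D_\perp$. Now split the logarithms:
\[
 \log(h_e\kappa_e)=\log(\alpha_e\kappa_e)+\log(R+\beta_e),\qquad \log D_H=\log D_\perp+\log(R+h_{\mathrm S}).
\]
Then:
\begin{itemize}
\item the $D_\perp\log D_\perp$ term combines with the $\perp$-sum to give $E_\perp-E_{\mathrm{pen},\perp}$, by Step 2;
\item the rest regroups into the $p^{\mathrm{dir}}$-averages of $\log(\alpha_e\kappa_e)$ and of $\log\frac{R+\beta_e}{R+h_{\mathrm S}}$, which is the stated formula.
\end{itemize}
If $\alpha_e\kappa_e=e^{-b}$ on the positive-weight nonperpendicular normals, the angular average is $-D_\perp b$, so the angular term equals $D_\perp b$.

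\textbf{Step 4: the $Q$ identity.} Write
\[
 \sum_{e\not\perp a}p_e^{\mathrm{dir}}\frac{R+\beta_e}{R+h_{\mathrm S}}=\frac{\sum_{e\not\perp a}w_eh_e}{D_H}.
\]
The trace identity $\sum_e w_eh_e=dD_H$ gives the total. From it, subtract the perpendicular part
\[
 \sum_{e\perp a}(R\omega_e+\zeta_e)h_e=(d-1)RD_\perp+N_{\perp,0}.
\]
Substituting $D_H=RD_\perp+\det H_-$, both $dD_H-(d-1)RD_\perp-N_{\perp,0}$ and $2D_H-Q$ reduce to $RD_\perp+d\det H_--N_{\perp,0}$. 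This proves the identity.

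Since $\beta_e\ge h_{\mathrm S}$, every ratio $\frac{R+\beta_e}{R+h_{\mathrm S}}$ is at least $1$. The average is therefore at least $1$, so $2-Q/D_H\ge1$, i.e.\ $Q\le D_H$. Jensen's inequality for $\log$ then gives the final bound $\log(2-Q/D_H)$.

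The main obstacle is the $\perp$-block identity $\sum_{e\perp a}\omega_e\Pi_e=\operatorname{adj}H_\perp\oplus0$. It needs care because $H_-$ is only semidefinite and the off-diagonal block $g$ enters the cofactors of $H$. I would verify it by expanding $\det$ along the $a$-row and column, where $R$ appears only in the $(a,a)$ entry.
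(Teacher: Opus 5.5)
Your proposal is correct and follows essentially the paper's own argument. Both proofs write $F_{\mathrm{det}}=D_H\log D_H-\sum_e w_eh_e\log(\kappa_eh_e)$, differentiate the balance identity to get $\operatorname{adj}H_\perp$ on $a^\perp$, use the $(a,a)$ cofactor and the two transverse traces, and obtain the $Q$ identity by subtracting the perpendicular part from $\sum_e w_eh_e=dD_H$. The only differences are cosmetic: you get $\beta_e\ge h_{\mathrm S}$ from the variational characterization of the Schur complement, and $Q\le D_H$ from the mean ratio being at least one, where the paper uses the equivalent identity $D_\perp\sum p_e^{\mathrm{dir}}(\beta_e-h_{\mathrm S})=D_H-Q$.
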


\begin{proof}
Since $H_\perp>0$, Schur completion gives the determinant formulas and $\beta_e\ge h_{\mathrm S}$.
Differentiating cofactor balance gives
\[
 \sum_{e\perp a}\omega_e\Pi_e=\operatorname{adj}{H_\perp}
 \quad\text{on }\; a^\perp.
\]
The $a,a$ entry gives $\sum_{e\not\perp a}w_e\alpha_e=D_\perp$, normalizing $p_e^{\mathrm{dir}}$.
Transverse traces give
\[
 \sum_{e\perp a}\omega_eh_e=(d-1){D_\perp}\quad\text{and}\quad
 E_\perp={D_\perp}\log {D_\perp}-\sum_{e\perp a}\omega_eh_e\log h_e.
\]

Differentiate $F_{\mathrm{det}}=D_H\log D_H-\sum_e w_eh_e\log(\kappa_eh_e)$ using $D_H'=D_\perp$ and $h'_e=0$ or $\alpha_e$ according as $e\perp a$ or not.
The constant terms cancel, leaving
\[
 F_{\mathrm{det}}'
 =D_\perp\log D_H-\sum_{e\perp a}\omega_eh_e\log(\kappa_eh_e)
 -D_\perp\sum_{e\not\perp a}p_e^{\mathrm{dir}}
                       \log[\kappa_e\alpha_e(R+\beta_e)].
\]
Substitute the transverse trace identity and $D_H=D_\perp(R+h_{\mathrm S})$ to obtain the marked rank-one identity.

Tracing balance against $H$ and subtracting the transverse contribution yields
\[
 D_\perp\sum_{e\not\perp a}p_e^{\mathrm{dir}}(\beta_e-h_{\mathrm S})
 =(d-1)D_\perp h_{\mathrm S}-N_{\perp,0}=D_H-Q\ge0.
\]
This gives the mean ratio and $Q\le D_H$.
Jensen is an equality precisely when $\beta_e$ is constant on positive-weight nonperpendicular normals.
\end{proof}

\begin{remark}
Zero weights are omitted from probability sums.
The proof requires no inverse or logarithm of $H_-$.
If it is singular, $h_{\mathrm S}=0$ and the last display forces $N_{\perp,0}=0$ and $Q=D_H$.
Since $\beta_e\ge h_{\mathrm S}$, every positive-weight term then has $\beta_e=0$, so the logarithmic correction vanishes.
\end{remark}

The next criterion gives a global comparison from bounds for transverse entropy and a cofactor mean along largest weights, without assuming $F\ge0$.

\begin{theorem}
\label{thm:portable-global-comparison}
Let $d\ge2$, let $a_1,\ldots,a_N\in\R^d$ be unit vectors, and define
\[
 H(t,x)=t\Id+\sum_{i=1}^Nx_i\Pi_{a_i}>0,
 \quad\text{with}\quad t,x_i\ge0.
\]
Suppose a fixed finite family of unit vectors has nonnegative cofactor weights $w_e(t,x)$ and fixed marks $\kappa_e>0$ satisfying balance.
Assume the weights are continuous, homogeneous of degree $d-1$, $C^1$ in positive $x$-coordinates on each fixed-support face, and satisfy Lemma~\ref{lem:marked-rank-one} in each positive $x_i$.
Let $F$ be the corresponding compensated entropy, and require $w_e(t,0)>0\Rightarrow\kappa_e=1$.
For each largest positive coordinate $R=x_i$, define the transverse budget
\[
 b_i^{\mathrm{ent}}=\frac{E_\perp-E_{\mathrm{pen},\perp}}{{D_\perp}}
       -\sum_{e\not\perp a_i}p_e^{\mathrm{dir}}\log(\alpha_e\kappa_e).
\]
Let $b_i^{\mathrm{ent}}\ge\underline b_i^{\mathrm{ent}}$ and $Q/D_H\ge\underline q_i$ at every such coordinate.
If either test
\begin{equation*}
 \underline b_i^{\mathrm{ent}}\ge\log(2-\underline q_i)
 \quad\text{or}\quad
 \underline b_i^{\mathrm{ent}}-\log2+\frac{\underline q_i}{2}\ge0
\end{equation*}
holds at each point and coordinate, then the compensated entropy $F$ is nonnegative, including on the positive-definite zero-core boundary.
If the chosen test is strict at every such point and coordinate, $F=0$ precisely at $x=0$.

If the cofactor family is the actual facet data of a lattice of covolume $v_\Lambda$, with
\[
 \kappa_e=(\ell_e/\ell_0)^2
 \quad\text{and}\quad \mathcal I_0=2dv_\Lambda^{1/d}/\ell_0,
\]
then $\mathcal I_d(LV_H)\ge \mathcal I_0$ for every $|\det L|=1$.
Under the strict tests, equality holds precisely at the core with orthogonal $L$.
If, on $t=0$, the left-minus-right margin in either chosen test is uniformly at least $\varepsilon>0$, then $F\ge\varepsilon$.
For actual facet data this yields
\begin{equation*}
 \mathcal I_d(LV_H)\ge \mathcal I_0e^{\varepsilon/(2d)}.
\end{equation*}
\end{theorem}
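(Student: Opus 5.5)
The plan is a descent argument: move from an arbitrary point $(t,x)$ to the core $x=0$ by lowering the largest coordinates, and track $F_{\det}=D_HF$ along the way with Lemma~\ref{lem:marked-rank-one}.

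\textbf{Base case.} At $x=0$ we have $H=t\Id$, and the hypothesis $w_e(t,0)>0\Rightarrow\kappa_e=1$ gives $F=0$ there, by the Remark after Theorem~\ref{lem:balanced-entropy-bridge}.

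\textbf{Capped path.} For $s\ge0$ set $x^{(s)}_i=\min(x_i,s)$, and run $s$ from $0$ up to $\max_i x_i$. At each $s$ the coordinates equal to the cap $s$ are exactly the largest positive coordinates, and raising $s$ raises all of them together. On each open $s$-interval the support is fixed, and $F_{\det}$ is $C^1$ there. Its $s$-derivative is the sum over the capped indices $i$ of $\partial_{x_i}F_{\det}$.

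\textbf{Sign of each derivative.} Each term is computed by Lemma~\ref{lem:marked-rank-one} with $R=x_i$, $a=a_i$ and $H_-=H-x_i\Pi_{a_i}$. That lemma gives
\[
 \partial_{x_i}F_{\det}=D_\perp\Bigl(b_i^{\mathrm{ent}}-\sum_{e\not\perp a_i}p_e^{\mathrm{dir}}\log\frac{R+\beta_e}{R+h_{\mathrm S}}\Bigr).
\]
By Jensen and the mean-ratio identity, the subtracted average is at most $\log(2-Q/D_H)\le\log(2-\underline q_i)$. The first test therefore gives $\partial_{x_i}F_{\det}\ge0$ directly.

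For the second test, use instead the bound $\log(2-q)\le\log2-q/2$, valid for $q\le1$. This follows from concavity of $q\mapsto\log(2-q)$ together with its values $\log 2$ at $q=0$ and $0$ at $q=1$. The caveat is that the chord bound holds only on $[0,1]$. For $q<0$ the chord lies below the concave curve, so the inequality fails. I would handle this by assuming $\underline q_i\ge0$, or by checking directly that $\log(2-q)\le\log2-q/2$ at the relevant values, and this must be verified. The lower bound $Q\le D_H$ from the lemma keeps us in the region $q\le1$.

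\textbf{Integration.} Since $F_{\det}$ is continuous across the finitely many support changes (the weights are continuous), integrating in $s$ gives $D_HF\ge0$ at the endpoint, and hence $F\ge0$. Positive-definite points with $t=0$ are reached by the same path wherever $H^{(s)}>0$. Where the path starts from a singular $H$, I would take $t\downarrow0$ and use continuity of $F$ in the positive-definite region.

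\textbf{Strictness.} If the test is strict, the derivative is strictly positive on every $s$-interval of positive length. So for $x\ne0$ we get $F_{\det}>0$, and $F=0$ forces $x=0$.

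\textbf{Uniform margin on $t=0$.} If the margin is at least $\varepsilon$, the derivative along the path is at least $\varepsilon$ times the number of capped indices times $D_\perp$. Integration then gives $F_{\det}\ge\varepsilon\int D_\perp\,ds$, while $D_H$ itself grows through $D_H'=D_\perp$. The natural comparison is $\int(\text{number of capped indices})\,D_\perp\,ds\ge D_H$ at the endpoint, which would yield $F\ge\varepsilon$. This step is the delicate one: at $t=0$ the path starts at the singular point $H=0$, and one must normalize with care there. I expect this normalization to be the main obstacle.

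\textbf{Geometric consequences.} For actual facet data, Lemma~\ref{lem:universal-cofactors} gives $\mathcal I_d(LV_H)/\mathcal I_0=\operatorname{surf}_H(L)$. Theorem~\ref{lem:balanced-entropy-bridge} then gives $\operatorname{surf}_H(L)\ge\exp(F/(2d))\ge1$, and under the uniform margin $\operatorname{surf}_H(L)\ge e^{\varepsilon/(2d)}$.

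For the equality case, equality forces $F=0$, hence $x=0$ under the strict tests. At $x=0$ the Remark after Theorem~\ref{lem:balanced-entropy-bridge} then gives equality precisely when $L$ is orthogonal.
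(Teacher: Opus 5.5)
Your descent for nonnegativity and strictness is the paper's own: the capped path, the rank-one identity of Lemma~\ref{lem:marked-rank-one}, Jensen with the mean-ratio identity, integration, and then Theorem~\ref{lem:balanced-entropy-bridge}. For $F\ge0$ at the zero-core start, your route of letting $t\downarrow0$ and using continuity of $F$ at $H(0,x)>0$ is a valid substitute for the paper's homogeneity argument.

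\textbf{The gap.} You leave the uniform-margin clause on $t=0$ open as the ``main obstacle''. It is part of the statement, and it is what later yields the zero-core gap in Theorem~\ref{thm:maximal-weight-entropy}. The missing idea is to run the same descent on $G=F_{\mathrm{det}}-\varepsilon D_H$.
\begin{itemize}
\item Since $D_H=D_\perp(R+h_{\mathrm S})$ with $D_\perp,h_{\mathrm S}$ independent of $R=x_i$, one has $\partial_{x_i}D_H=D_\perp$. This is exactly the $i$-dependent $D_\perp$ of the lemma.
\item The margin therefore gives $\partial_{x_i}G\ge0$ at every maximal coordinate. So $G$ is nondecreasing along the capped path inside $\{t=0\}$.
\item No comparison of the form $\int(\#\text{capped})D_\perp\,ds\ge D_H$ is needed. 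Indeed, ``number of capped indices times $D_\perp$'' is not well defined, since $D_\perp$ depends on $i$. The sum of $D_{\perp,i}$ over the moving indices is precisely $\frac{d}{ds}D_H$.
\item The singular start is harmless. Near $s=0$ the path is $s\1_{\{x>0\}}$, and $H>0$ there because the support is fixed. Degree-$(d-1)$ homogeneity of the weights, together with $\sum_ew_eh_e=dD_H$, makes $F_{\mathrm{det}}$ and $D_H$ homogeneous of degree $d$. Hence $G\to0$ as $s\downarrow0$.
\item Your own continuity step also closes it: $F_{\mathrm{det}}(0,x^{(s)})\ge0$, while $D_H(0,x^{(s)})\to0$.
\end{itemize}
The $t\downarrow0$ limit by itself cannot give the gap, because the margin is assumed only on $t=0$.

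\textbf{Two smaller corrections.} First, your caveat about the linear test is mistaken. The line $\log2-q/2$ is not the chord through $(0,\log2)$ and $(1,0)$; that chord is $(1-q)\log2$. It is the tangent line of the concave function $q\mapsto\log(2-q)$ at $q=0$. Hence $\log(2-q)\le\log2-q/2$ for all $q<2$, in particular for every $\underline q_i\le Q/D_H\le1$, negative values included. No hypothesis $\underline q_i\ge0$ should be added.

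Second, for strictness at $t=0$ the continuity limit alone only gives $F\ge0$, since a limit of positive values may vanish. You must integrate the strictly positive derivative from some $s_0>0$, taking as starting value either $F_{\mathrm{det}}(0,x^{(s_0)})\ge0$ or the homogeneous zero limit.
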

\begin{proof}
Write $F_{\mathrm{det}}={D_H}F$.
Lemma~\ref{lem:marked-rank-one} and Jensen give
\[
 \partial_{x_i}F_{\mathrm{det}}
 \ge {D_\perp}\{b_i^{\mathrm{ent}}-\log(2-Q/{D_H})\}
 \ge {D_\perp}\{\underline b_i^{\mathrm{ent}}-\log(2-\underline q_i)\}.
\]
Since $\underline q_i\le Q/D_H\le1$, both logarithms are defined, and $\log(2-q)\le\log2-q/2$ gives the linear test.
Thus the maximal-coordinate derivatives satisfy the asserted bounds.

Cap $x_i(r)=\min\{x_i,r\}$ for $0<r\le\max_i x_i$.
Between its finitely many break points,
\[
 \frac{d}{dr}F_{\mathrm{det}}(t,x(r))
 =\sum_{i:x_i>r}\partial_{x_i}F_{\mathrm{det}}(t,x(r))\ge0,
\]
because every active coordinate is maximal.
For $t>0$, $F_{\mathrm{det}}(t,0)=0$.
For $t=0$, fixed positive support keeps $H>0$ along the path, which near zero is $x(r)=r\1_{\{x>0\}}$.
Homogeneity and $\sum_e w_eh_e=dD_H$ make $F$ scale invariant and $F_{\mathrm{det}}$ homogeneous of degree $d$, again with zero initial limit.
Integration proves nonnegativity and strictness.
Applying the same argument to $F_{\mathrm{det}}-\varepsilon D_H$ proves the boundary gap.
Theorem~\ref{lem:balanced-entropy-bridge} and its scalar-core equality remark give the surface assertions.
\end{proof}

For the configurations studied here, the facet formulas below verify the algebraic hypotheses, and Sections~\ref{sec:global-comparison} and~\ref{sec:mean-bounds} establish the transverse entropy and cofactor-mean bounds.
Application to another family requires identifying its actual facet weights and squared length ratios and verifying all the hypotheses of the criterion.
Cofactor balance alone does not supply the two geometric bounds.
Scale invariance allows these bounds to be checked on $t+\max_i x_i=1$.
A larger geometric class also requires a covering by the relevant cones and separate comparisons for any remaining families.
Geometric stability additionally requires metric coercivity and control of how the cells vary with the metric.
The exact remainder supplies affine control relative to the facet matrix.

\subsection{Nonprincipal geometry and the marked facet measure}
\label{sec:nonprincipal}

The two nonprincipal families share a $D_4$ core and eight of their nine retained rays.
We also use the cographic configuration $\CoGr(G)$, whose bases are complements of spanning trees of a connected graph $G$.
For $v\ne0$, write $\Pi_v=vv^{\mathsf T}/\|v\|^2$, and for an index set $J$ write $x_J=\prod_{i\in J}x_i$, with $x_\varnothing=1$.
The retained rays in $T_j=\{3j-2,3j-1,3j\}$ and the omitted vector $m_j$ form the orthonormal frame $\mathrm{fr}_j$.
The type II coordinates are
\[
  \begin{array}{c|cccc}
  j&a_{3j-2}&a_{3j-1}&a_{3j}&m_j\\
  \hline
  1&e_1&e_2&e_3&e_4\\
  2&\tfrac12(1,1,1,-1)&\tfrac12(1,1,-1,1)&\tfrac12(1,-1,1,1)&\tfrac12(1,-1,-1,-1)\\
  3&\tfrac12(1,1,1,1)&\tfrac12(1,1,-1,-1)&\tfrac12(1,-1,-1,1)&\tfrac12(1,-1,1,-1).
  \end{array}
\]
Type III interchanges $a_9$ and $m_3$.
The twelve lines in the complete frames will be called the full-frame lines, with weight zero assigned to each omitted vector.
Set
\[
  B_{D_4}=\begin{pmatrix}1&0&0&0\\-1&1&0&0\\0&-1&1&1\\0&0&-1&1\end{pmatrix},
  \quad
  K=\sum_{i=1}^9x_i\Pi_{a_i},
  \quad
  H=t\Id+K
  ,\quad
  H_{\mathbb Z}=B_{D_4}^{\mathsf T}HB_{D_4},
\]
where $t,x_i\ge0$ and $H>0$.
Then $\det B_{D_4}=2$.
The fixed matrix $B_{D_4}$ records the lattice basis, while $H$ expresses the variable form in the orthonormal coordinates of the regular cell.
We call $t$ the \emph{core weight} and $x_1,\ldots,x_9$ the \emph{retained weights}.
When all retained weights vanish, $t$ only scales the regular cell.
Write $D_H=\det H$, $h_e=e^{\mathsf T}He$, $s_j=\sum_{i\in T_j}x_i$, ${s_{\rm tot}}=\sum_js_j$, and $u_j={s_{\rm tot}}-s_j$.
The old normal lines are
\[
  \mathcal N_{\rm old}=\{(e_r\pm e_s)/\sqrt2:1\le r<s\le4\}
  \quad\text{and}\quad
  \mathcal N=\mathcal N_{\rm old}\cup\{m_1,m_2,m_3\}.
\]
Each element of $\mathcal N$ represents an opposite facet pair, and each area below refers to one facet.

\begin{lemma}
\label{lem:frame-geometry}
The displayed rays represent the two closed nonprincipal $L$-domains, with retained configurations $\Gr(K_5-e)$ in type II and $\CoGr(K_{3,3})$ in type III.
Distinct frames have squared inner products $1/4$, and
\[
  \sum_{e\in\mathcal N_{\rm old}}\Pi_e=3\Id
  \quad\text{and}\quad
  \sum_{e\in\mathcal N_{\rm old},\, e\perp a}\Pi_e=2(\Id-\Pi_a)
\]
for every full-frame vector $a$.
For every symmetric $4\times4$ matrix $Z$,
\begin{equation}\label{eq:D4-root-moments}
 \sum_{e\in\mathcal N_{\rm old}}(e^{\mathsf T}Ze)\Pi_e
 =Z+\frac12\tr(Z)\Id\quad\text{and}\quad
 \sum_{e\in\mathcal N_{\rm old}}\operatorname{var}_e(Z)=2\|\tf Z\|^2.
\end{equation}
An independent triple of full-frame lines is either a same-frame triple of Gram determinant one or has Gram determinant $1/2$ and an old perpendicular normal.
Every independent retained four-tuple has squared determinant $1/4$.
For each old normal, the six perpendicular full-frame lines form the $A_3$ root configuration.
They have $16$ independent triples, each line belongs to eight, and an orthogonal or nonorthogonal pair extends to four or three, respectively.
The retained-ray symmetry orbits are $\{1,2,4,5,7,8\}$ and $\{3,6,9\}$ in type II, and all nine rays in type III.
\end{lemma}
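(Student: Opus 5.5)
The lemma collects several finite verifications, and I will treat it as a list of explicit checks rather than look for a uniform argument.

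\emph{Identification of the $L$-domains.} I will compare the displayed rays with the generators given in~\cite[Section~6.2]{DutourGrishukhin2009}. Concretely, I will transport each ray $\Pi_{a_i}$ to the lattice basis via $B_{D_4}^{\mathsf T}\Pi_{a_i}B_{D_4}$, which yields rank-one integral forms $v_iv_i^{\mathsf T}$ up to scale. It then suffices to check two things: the $v_i$ realize $\Gr(K_5-e)$ in type II and $\CoGr(K_{3,3})$ in type III, and together with the $D_4$ core form they span the extreme rays listed there.

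For the matroid check I will exhibit the circuits directly. In type II, each frame triple together with suitable cross-frame vectors gives the triangle relations of $K_5-e$; interchanging $a_9$ and $m_3$ yields the $K_{3,3}$ cocircuits. Unimodularity then gives the claim that every independent retained four-tuple has squared determinant $1/4$, once the value is computed on a single basis: for example, $\det[e_1,e_2,e_3,a_4]^2=(a_4\cdot e_4)^2=1/4$.

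\emph{Moment identities.} The statement about distinct frames is immediate from the coordinates: each entry is $\pm\tfrac12$ against $\pm e_i$, or two half-vectors overlapping in an even pattern. For the identities in~\eqref{eq:D4-root-moments} I will use $\Pi_{(e_r\pm e_s)/\sqrt2}$. Summing over the two signs gives $E_{rr}+E_{ss}$, and summing over pairs gives $3\Id$. For the perpendicular sum, the roots orthogonal to $a$ form an $A_3$ system, which is checked for $a=e_4$ and $a=\tfrac12(1,1,1,1)$ and transported by the $D_4$ Weyl/triality symmetry permuting full-frame lines. Its frame sum is $2(\Id-\Pi_a)$ by the same pairing computation.

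The quartic moment $\sum_e(e^{\mathsf T}Ze)\Pi_e$ is computed entrywise. The diagonal terms contribute $\tfrac12(\tr Z)\Id+\tfrac12\mathrm{diag}(Z)$ plus correction terms, and the off-diagonal terms $Z_{rs}$ arise from the $\pm$ pairs. This yields $Z+\tfrac12\tr(Z)\Id$. Tracing against $Z$ and subtracting $\sum_e(e^{\mathsf T}Ze)^2$ then gives the variance identity: $\sum_e e^{\mathsf T}Z^2e=3\tr Z^2$ and $\sum_e (e^{\mathsf T}Ze)^2=\tr Z^2+\tfrac12(\tr Z)^2$, so the difference is $2\tr Z^2-\tfrac12(\tr Z)^2=2\|\tf Z\|^2$.

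\emph{Triples and counting.} Same-frame triples are orthonormal. For a mixed independent triple I will compute the Gram determinant from the inner products $0$ and $\pm\tfrac12$, and obtain a perpendicular old root as the normalized cross product in the Hodge sense. Checking the two orbit types suffices by symmetry. The $A_3$ counts are standard: there are $\binom63=20$ triples, minus the $4$ dependent triples coming from the $A_2$ subsystems, giving $16$. The per-line count $8$ and the pair counts $4$ or $3$ follow by inspection of the cuboctahedral arrangement. Finally, the orbit statements come from identifying the stabilizer of the retained configuration in the $D_4$ symmetry group, which is essentially the automorphism group of $K_5-e$, respectively $K_{3,3}$, acting on edges. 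The edge $e$ removed from $K_5$ distinguishes the rays $3,6,9$.

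I expect the main obstacle to be the first step. Matching the coordinates of~\cite{DutourGrishukhin2009} to these frames, and certifying that exactly these nine rays together with the core span each closed domain, requires a careful explicit integral check.
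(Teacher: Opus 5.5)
Your plan matches the paper on most items: the cone and graph identifications come from the Dutour--Grishukhin classification, the two projector resolutions from summing signed pair projectors and frame symmetry, the moment identity from weighting these pairs by $e^{\mathsf T}Ze$, the variance identity from pairing with $Z$, and the $A_3$ incidences from spanning-tree counts of $K_4$. The orbit step, however, has a gap. Orthogonal symmetries of $C_{24}$ that preserve the retained lines induce matroid automorphisms. So they cannot mix $\{3,6,9\}$ (the rays lying in three $3$-circuits, e.g.\ $a_3=a_6-a_9$ in type II) with the other six. Transitivity on $\{1,2,4,5,7,8\}$, and on all nine rays in type III, needs the converse: enough automorphisms of $K_5-e$, respectively $K_{3,3}$, must be induced by such orthogonal maps. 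Saying the stabilizer is ``essentially'' the graph automorphism group asserts exactly this without proof. The paper closes the gap by exhibiting the maps. In type II these are the involutions $(1\,2)(4\,5)(7\,8)$, $(1\,4)(2\,5)(3\,6)$, $(4\,8)(5\,7)(6\,9)$. In type III they are row permutations together with simultaneous column permutations of the $3\times3$ ray array. Each is induced by an orthogonal map.

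The four-tuple step, as written, does not follow. Regularity of a graphic or cographic matroid does not make a given real realization unimodular. Such a realization is in general only projectively equivalent to a unimodular one, $a_i=\lambda_iAu_i$. For unit vectors the basis determinants $|\det A|\prod_{i\in B}|\lambda_i|$ need not be constant, so exhibiting circuits and computing one basis is not enough. You have two ways to fix this:
\begin{itemize}
\item Invoke, as a separate input from the cited classification, that the primitive vectors $B_{D_4}^{\mathsf T}a_i$ form a unimodular integral system. Then $\det B_{D_4}=2$ gives $1/4$.
\item Argue as the paper does, from your own triple classification. The squared determinant is the Gram determinant of a triple times the squared height of the fourth ray. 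That height squared is $1/4$ over a same-frame triple (its component along the omitted frame vector) and $1/2$ over a mixed triple (its component along the old normal). Both products equal $1/4$.
\end{itemize}
Finally, you verified the $A_3$ structure only for the old normals perpendicular to a full-frame line. The lemma states the dual: the six full-frame lines perpendicular to an old normal form $A_3$. For example, for $e=(e_1+e_2)/\sqrt2$ these lines are $(b_i\pm b_k)/\sqrt2$ with $b_1=(e_3+e_4)/\sqrt2$, $b_2=(e_3-e_4)/\sqrt2$, $b_3=(e_1-e_2)/\sqrt2$.
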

\begin{proof}
The cone and graph identifications are the explicit $D_4$-ray description in \cite{DutourGrishukhin2009}.
The coordinates give the frame inner products and triple determinants.
An independent fourth retained ray has squared height $1/4$ over a same-frame triple and $1/2$ otherwise.
Both give squared determinant $1/4$.
Summing signed coordinate-pair projectors gives the first resolution, and frame symmetry gives the second.
Weighting these pairs by $e^{\mathsf T}Ze$ gives the matrix moment identity, and pairing it with $Z$ gives the variance identity.
The six lines perpendicular to an old normal have coordinates $(b_i\pm b_k)/\sqrt2$ in an orthonormal basis of its perpendicular space, so the remaining incidence counts are the spanning-tree counts of $K_4$.
The type II permutations $(1\,2)(4\,5)(7\,8)$, $(1\,4)(2\,5)(3\,6)$, and $(4\,8)(5\,7)(6\,9)$ are induced by orthogonal maps and give the stated orbits.
In type III, row permutations and simultaneous column permutations of the $3\times3$ ray array act orthogonally and transitively on the retained lines.
\end{proof}

We compute the facets in the dual cell, where the parameters act by Minkowski addition in fixed directions, and then apply $H^{-1/2}$.
The cubic weights below are both the facet-volume coefficients and the coefficients in an adjugate resolution.

\begin{lemma}
\label{lem:facet-cofactors}
Up to an orthogonal map, the Voronoi cell is
\[
  V_H=H^{-1/2}\mathcal C_H
  \quad\text{and}\quad
  \mathcal C_H=tC_{24}+\sum_{i=1}^9\frac{x_i}{2}[-a_i,a_i].
\]
For $e\in\mathcal N_{\rm old}$, write $I_e=\{i:e\perp a_i\}$ and define
\begin{equation*}
w_e=\frac{t^3}{3}+\frac{t^2}{2}\sum_{i\in I_e}x_i
+\frac t2\sum_{\substack{i<k\\i,k\in I_e}}(1+\1_{a_i\perp a_k})x_ix_k
+\frac12\sum_{\substack{J\subset I_e,\ |J|=3\\J\ \mathrm{independent}}}x_J.
\end{equation*}
For a new normal set $w_{m_j}={x_{T_j}}$, and note that $h_{m_j}=t+u_j/4$.
Then
\[
  \sum_{e\in\mathcal N}w_e\Pi_e=\operatorname{adj}H
  \quad\text{and}\quad
  \sum_{e\in\mathcal N}w_eh_e=4D_H.
\]
Every positive-area facet occurs in this list, with area
\[
  A_e=\frac{\sqrt2\,w_e\sqrt{h_e}}{\sqrt{D_H}}\quad\text{with } e\in\mathcal N_{\rm old},
  \quad\text{and}\quad
  A_{m_j}=\frac{{x_{T_j}}\sqrt{h_{m_j}}}{\sqrt{D_H}},
\]
and $\vol(V_H)=2\sqrt{D_H}$.
\end{lemma}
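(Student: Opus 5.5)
The plan has two halves: describe $\mathcal C_H$ as a Minkowski sum, then read off the facets and check the cofactor identity directly.

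\textbf{Minkowski description.} Lemma~\ref{lem:frame-geometry} places $H_{\mathbb Z}=B_{D_4}^{\mathsf T}(t\Id+\sum_ix_i\Pi_{a_i})B_{D_4}$ in the closure of one nonprincipal $L$-domain. Its rays are the $D_4$ form and the rank-one forms $B_{D_4}^{\mathsf T}a_ia_i^{\mathsf T}B_{D_4}$, each a multiple of $v_iv_i^{\mathsf T}$ with $v_i$ primitive. Lemma~\ref{lem:minkowski} then writes $\operatorname{Vor}_{\rm dual}(H_{\mathbb Z})$ as $t\operatorname{Vor}_{\rm dual}$ of the $D_4$ form plus segments $x_i[-v_i,v_i]$ up to the ray normalization. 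Transport back with $B_{D_4}^{-\mathsf T}$ and the factor $2$ in the definition of $\operatorname{Vor}_{\rm dual}$. By Lemma~\ref{lem:reference-cell}, the core term becomes $tC_{24}$ and each segment becomes $\tfrac{x_i}{2}[-a_i,a_i]$; the normalizations are fixed by comparing at $x=0$ and at one ray. So $\mathcal C_H$ equals the displayed sum, and $V_H=H^{-1/2}\mathcal C_H$ up to an orthogonal map, as in Section~\ref{sec:entropy-framework}.

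\textbf{Facets.} Since this is a Minkowski sum of $tC_{24}$ and a zonotope, each facet with normal $e$ is the sum of the faces of the summands maximizing $e$. The face of $tC_{24}$ is a facet exactly for $e\in\mathcal N_{\rm old}$, and a segment contributes a positive-length edge exactly when $e\perp a_i$. The mixed-volume expansion of a facet of the sum therefore has four kinds of terms:
\begin{itemize}
\item the octahedron, of volume $t^3\sqrt2/3$;
\item the octahedron's width in each direction $a_i$ with $i\in I_e$, giving the $t^2x_i$ terms;
\item two-dimensional sections for pairs $i,k\in I_e$, giving the $tx_ix_k$ terms;
\item parallelepipeds on the independent triples in $I_e$, giving the $x_J$ terms.
\end{itemize}
By Lemma~\ref{lem:frame-geometry}, the six lines perpendicular to $e$ form $A_3$ inside the octahedral facet, so the widths and sections are explicit. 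The factor $(1+\1_{a_i\perp a_k})$ comes from the two possible angles $\pi/2$ and $\pi/3$ of the pair, and the Gram determinants $1$ or $1/2$ fix the coefficient $\tfrac12$ on the triples. After factoring out the support distance and normalization, the octahedral facet area is $\sqrt2\,w_e$.

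A new normal $m_j$ is perpendicular to all three rays of $T_j$ and to no root in a generic direction. It therefore receives only the parallelepiped on $T_j$, of area $x_{T_j}$, which is nonzero since the frame is orthonormal. Conversely, any other direction $e$ meets fewer than three independent perpendicular segments together with a lower-dimensional core face, so it produces no facet. This gives the completeness claim.

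\textbf{Areas and identities.} In the notation of Lemma~\ref{lem:universal-cofactors}, $b_e=\sqrt2w_e$ with $\ell_e=1/\sqrt2$ for old normals, and $b_{m_j}=x_{T_j}$ with $\ell_{m_j}=1$ for new ones. Applying $H^{-1/2}$ multiplies facet area by $\sqrt{h_e}/\sqrt{D_H}$, which gives $A_e$. The lattice $D_4$ has $v_\Lambda=2$, so $\vol(V_H)=2\sqrt{D_H}$, and then $w_e=\ell_eb_e/v_\Lambda$ is exactly the stated weight. Cofactor balance $\sum w_e\Pi_e=\operatorname{adj}H$ now follows from Lemma~\ref{lem:universal-cofactors}. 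Tracing it against $H$ gives $\sum w_eh_e=4D_H$. Finally, $h_{m_j}=t+\sum_ix_i(m_j\cdot a_i)^2=t+u_j/4$, since $m_j\perp T_j$ and distinct frames have squared inner product $1/4$.

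\textbf{Main obstacle.} The hard step is the exact facet decomposition: the combinatorial coefficients in $w_e$ and the claim that no further facets appear. As a safeguard, I would check the polynomial identity $\sum w_e\Pi_e=\operatorname{adj}H$ directly, degree by degree in $t$:
\begin{itemize}
\item at degree $t^3$, use $\sum\Pi_e=3\Id$;
\item at degree $t^2$, use the perpendicular resolution $\sum_{e\perp a}\Pi_e=2(\Id-\Pi_a)$;
\item at degree $t$, use the pair counts from Lemma~\ref{lem:frame-geometry};
\item at degree $0$, use the triple counts together with $w_{m_j}$.
\end{itemize}
Each of these matches the corresponding term of the expansion of $\operatorname{adj}(t\Id+K)$ via Cauchy--Binet. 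Since facet areas are determined by balance only up to a sum over each normal line, this check confirms that the coefficients are consistent.
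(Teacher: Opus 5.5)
\textbf{Verdict: genuine gap.} Your Minkowski description, facet areas, $H^{-1/2}$ area factor and volume follow the paper. The problem is that you reverse the logic of the last step, and the gap sits there.

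\textbf{How the paper argues.} The paper proves $\sum_e w_e\Pi_e=\operatorname{adj}H$ algebraically: it writes $H=\sum_a(x_a+t/3)\Pi_a$ over the twelve full-frame lines and applies adjugate Cauchy--Binet. Completeness then comes for free. Tracing against $H$ shows that the listed facets already exhaust the pyramid identity $\sum_F h_FA_F=4\vol(V_H)$, so any further facet has zero area.

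\textbf{Where your route breaks.} You derive balance from Lemma~\ref{lem:universal-cofactors}, whose sum runs over \emph{all} facets. So your completeness claim carries the whole identity, and the sentence you offer for it is a non sequitur. A lower-dimensional face of $tC_{24}$ plus fewer than three independent perpendicular segments can still be three-dimensional, for example a triangular $2$-face plus one transverse segment, or an edge plus two.

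\textbf{The missing fact.} Every edge of $C_{24}$ is parallel to a full-frame line, because differences of adjacent vertices are again of the form $\pm e_i$ or $\frac12(\pm1,\pm1,\pm1,\pm1)$. With this:
\begin{itemize}
\item A three-dimensional face with normal $e$ has tangent space spanned by full-frame lines perpendicular to $e$: the edge directions of the core face together with the perpendicular retained rays.
\item Three of these lines are independent, so by Lemma~\ref{lem:frame-geometry} $e$ is an old normal or a frame vector.
\item A frame vector exposes only a vertex of $C_{24}$. Its perpendicular retained rays are the retained members of its own frame, and there are three of them only when $e=m_j$.
\end{itemize}
With this supplied, your route is valid. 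It replaces the paper's adjugate computation by a face analysis of $C_{24}$.

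\textbf{Alternative fix.} Your degree-by-degree check of $\operatorname{adj}(t\Id+K)$ is itself a complete proof of balance, parallel to the paper's Cauchy--Binet verification. You could make it the main argument and then use the exhaustion trick for completeness.

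\textbf{Slip in $\ell_e$.} In Lemma~\ref{lem:universal-cofactors}, $\ell_e=\|v\|$ is the length of the facet-defining lattice vector. That is $\sqrt2$ for roots and $2$ for $2e_4$ or $(1,-1,-1,-1)$-type vectors. The values $1/\sqrt2$ and $1$ you used are the support distances $\|v\|/2$. With them, $\ell_eb_e/v_\Lambda$ equals $w_e/2$ and $x_{T_j}/2$, not the stated weights.

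\textbf{Slip in the mixed-volume expansion.} For the facet $O+\sum_{i\in I_e}\frac{x_i}{2}[-a_i,a_i]$ you have the roles of the terms swapped:
\begin{itemize}
\item The $t^2x_i$ coefficient is the area of the octahedron's shadow along $a_i$, not a width.
\item The $tx_ix_k$ coefficient is the parallelogram area times the octahedron's width normal to $\operatorname{span}(a_i,a_k)$, not a two-dimensional section.
\end{itemize}
With the correct quantities, orthogonal and $60^\circ$ pairs give $\sqrt2\,tx_ix_k$ and $\frac{\sqrt2}{2}tx_ix_k$. This ratio is the factor $1+\1_{a_i\perp a_k}$.
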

\begin{proof}
The vectors $B_{D_4}^{\mathsf T}a_i$ are primitive integral, and the isotropic dual cell is $2B_{D_4}^{\mathsf T}C_{24}$.
Lemma~\ref{lem:minkowski} therefore gives $\operatorname{Vor}_{\rm dual}(H_{\mathbb Z})=2B_{D_4}^{\mathsf T}\mathcal C_H$.
The orthogonal matrix $H_{\mathbb Z}^{-1/2}B_{D_4}^{\mathsf T}H^{1/2}$ identifies $\tfrac12H_{\mathbb Z}^{-1/2}\operatorname{Vor}_{\rm dual}(H_{\mathbb Z})$ with $V_H$, of volume $\sqrt{\det H_{\mathbb Z}}=2\sqrt{D_H}$.
An old normal exposes the octahedron with centered vertices $\pm b_k/\sqrt2$ and tangent directions $(b_i\pm b_k)/\sqrt2$.
Multilinearity gives area $\sqrt2w_e$.
Orthogonal and nonorthogonal pairs have coefficients two and one, respectively, while independent triples have Gram determinant $1/2$.
A new normal exposes a vertex of $C_{24}$ and its three orthogonal tangent segments, giving area ${x_{T_j}}$.
To verify cofactor balance, set $y_a=x_a+t/3$ on the twelve full-frame lines.
Since $H=\sum_a y_a\Pi_a$, adjugate Cauchy--Binet gives
\[
  \operatorname{adj}H=\sum_{e\in\mathcal N_{\rm old}}\widetilde w_e\Pi_e+\sum_{a\ \mathrm{full}}\widetilde w_a\Pi_a,
\]
where
\[
  \widetilde w_e=\frac12\sum_{\substack{|J|=3,\ J\perp e\\J\ \mathrm{independent}}}y_J \quad\text{and}\quad
  \widetilde w_a=\prod_{b\in \mathrm{fr}(a)\setminus\{a\}}y_b.
\]
Set $w_a=\prod_{b\in \mathrm{fr}(a)\setminus\{a\}}x_b$, which vanishes unless $a$ is omitted.
Incidence counts above give
\begin{align*}
w_e-\widetilde w_e&=\frac{t^3}{27}+\frac{t^2}{18}\sum_{a\perp e}x_a
+\frac t3\sum_{\substack{a<b,\ a,b\perp e\\a\perp b}}x_ax_b,\\
\widetilde w_a-w_a&=\frac{t^3}{27}+\frac{t^2}{9}\sum_{b\in \mathrm{fr}(a)\setminus\{a\}}x_b
+\frac t3\sum_{\{b,c\}\subset \mathrm{fr}(a)\setminus\{a\}}x_bx_c.
\end{align*}
Their projector sums agree.
The constant and linear terms use the resolutions.
For an orthogonal pair, complementary frame vectors and perpendicular old lines give orthonormal bases of the same plane.
Tracing against $H$ gives the scalar identity.
The support distances in $\mathcal C_H$ are $h_e/\sqrt2$ on old normals and $h_{m_j}$ on new ones.
The area factor $\sqrt{h_e}/\sqrt{D_H}$ under $H^{-1/2}$ gives the stated formulas.
The listed pairs exhaust $\sum_Fh_FA_F=4\vol(V_H)$, where $h_F$ is the facet's support distance from the center.
Thus no further positive-area facet occurs.
Continuity includes all zero parameters with $H>0$.
\end{proof}

Define the normalized marked cofactor mass by
\[
  \theta=\frac1{D_H}\sum_{j=1}^3{x_{T_j}}h_{m_j}.
\]
The two pyramids over the facet pair represented by $e$ occupy the fraction $w_eh_e/(4D_H)$ of $V_H$, so new facets account for the volume fraction $\theta/4$.

Use the entropy and facet matrices of Theorem~\ref{lem:balanced-entropy-bridge}, with $\kappa_e=1$ on old normals and $\kappa_e=2$ on new normals.
These marks are the squared facet-vector length ratios prescribed by the surface formula.
Thus $F=E-\theta\log2$ and $j_\kappa=\det W_\kappa/D_H^{5/2}$, and the actual unit normals remain $n_e=H^{1/2}e/\sqrt{h_e}$.
\begin{corollary}\label{lem:surface-calibration}\label{lem:entropy-bridge}
For every $|\det L|=1$,
\[
 \mathcal I(LV_H)\ge2^{11/4}j_\kappa^{1/4}
 \ge2^{11/4}\exp(F/8).
\]
At $H=t\Id$, $t>0$, one has $F=0$ and $j_\kappa=1$.
Equality in the surface comparison holds exactly for orthogonal $L$.
\end{corollary}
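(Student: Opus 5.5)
The plan is to apply Theorem~\ref{lem:balanced-entropy-bridge} to the actual facet data of Lemma~\ref{lem:facet-cofactors}, using Lemma~\ref{lem:universal-cofactors} to identify $\operatorname{surf}_H(L)$ with $\mathcal I(LV_H)/\mathcal I_0$. First we need the normalization. The lattice is $\Lambda=B_{D_4}\Z^4$, of covolume $v_\Lambda=2$, and in these orthonormal coordinates $\mathcal C_{t\Id}=tC_{24}$. Old normals come from the roots $v=e_r\pm e_s$, of length $\ell_e=\sqrt2$. Taking the reference length $\ell_0=\sqrt2$ gives $\kappa_e=1$ on old normals. New normals are $v=2m_j$, of length $2$, so $\kappa_{m_j}=2$; this is consistent with the support distance $h_{m_j}$, compared with $h_e/\sqrt2$ for old normals. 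We then check that $w_e$ in Lemma~\ref{lem:universal-cofactors}, namely $\ell_eb_e/v_\Lambda$ with $b_e$ the facet area in $\mathcal C_H$, agrees with the cubic weights of Lemma~\ref{lem:facet-cofactors}. For old normals the area is $\sqrt2\,w_e$, so $\ell_eb_e/2=w_e$. For new normals the area is $x_{T_j}$, so $2x_{T_j}/2=x_{T_j}$. With these identifications the reference constant is $\mathcal I_0=2\cdot4\cdot2^{1/4}/\sqrt2=2^{11/4}$.

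With these identifications, Lemma~\ref{lem:universal-cofactors} gives $\mathcal I(LV_H)=2^{11/4}\operatorname{surf}_H(L)$. Cofactor balance $\sum w_e\Pi_e=\operatorname{adj}H$ is supplied by Lemma~\ref{lem:facet-cofactors}, and the weights are nonnegative. Theorem~\ref{lem:balanced-entropy-bridge} then yields $\operatorname{surf}_H(L)\ge j_\kappa^{1/4}\ge\exp(F/8)$. Since $\sum_e\lambda_e\log\kappa_e$ reduces to $\log2\cdot\sum_j x_{T_j}h_{m_j}/D_H=\theta\log2$, this compensated entropy is exactly $F=E-\theta\log2$. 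Facets of zero area carry zero weight and do not affect any sum, so continuity covers the boundary forms with $H>0$.

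For the core, at $H=t\Id$ all retained weights vanish, so $w_{m_j}=0$ and every positive weight sits on an old normal, where $\kappa_e=1$. The scalar-core part of the remark after Theorem~\ref{lem:balanced-entropy-bridge} then gives $E=F=\log j_\kappa=0$ and $T=S$. Hence $\operatorname{surf}_H(L)=\frac{\tr S}{4}(1+g_{\mathrm{ang}})$, and this equals $1$ exactly when $S=\Id$, that is, when $L$ is orthogonal.

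The only real obstacle is the bookkeeping of lengths and areas in the new-normal case. We must confirm that the facet vector of $m_j$ is $2m_j$ and not $m_j$, and that the area in $\mathcal C_H$ used for $w_{m_j}$ is before the $H^{-1/2}$ scaling. I would verify this against $\sum w_eh_e=4D_H$ and the pyramid count $\sum h_FA_F=4\vol$. Both are consistent only with $\ell_{m_j}b_{m_j}/v_\Lambda=x_{T_j}$ and support distance $h_{m_j}=\ell_{m_j}^2h_e/\ldots$ normalized as above.
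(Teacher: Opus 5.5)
Your proposal is correct and follows the paper's own proof. The paper also identifies the facet data of Lemma~\ref{lem:facet-cofactors} with $d=4$, $v_\Lambda=2$, $\ell_0=\sqrt2$, so that $\mathcal I_0=2^{11/4}$ and $\kappa=1,2$ on old and new normals; it applies Theorem~\ref{lem:balanced-entropy-bridge} with marked sum $\theta\log2$, and it invokes the scalar-core remark because only old weights survive at $H=t\Id$. Your closing paragraph is unnecessary, since your opening paragraph already checks $\ell_{m_j}=2$ and $\ell_eb_e/v_\Lambda=w_e$ directly. That paragraph is also slightly off: its final formula is garbled, and the pyramid count cannot pin down $\ell_{m_j}$, because it holds for any facet-vector length.
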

\begin{proof}
Lemma~\ref{lem:facet-cofactors} identifies the actual facet data with $d=4$, $v_\Lambda=2$, and $\ell_0=\sqrt2$, so $\mathcal I_0=2^{11/4}$.
Apply Theorem~\ref{lem:balanced-entropy-bridge} with marked sum $\sum_e\lambda_e\log\kappa_e=\theta\log2$.
Its scalar-core equality applies because only old weights are then positive.
\end{proof}

It remains to prove $F\ge0$, with equality only at the regular core.
This is the content of Theorem~\ref{thm:maximal-weight-entropy}.

\section{Transverse estimates and entropy descent}
\label{sec:global-comparison}

The main task is to bound transverse entropy strongly enough to offset the facet-length penalty.
The spanning-tree and variance estimates below, combined with the mean comparison of Section~\ref{sec:mean-bounds}, give positivity in the rank-one identity. Integration then proves the global comparison.

\subsection{Transverse entropy and variance}
\label{sec:transverse}

Choose a retained ray $a$ of weight $R$, with companion weights $u,v$, and write $H=R\Pi_a+{H_-}$.
Set ${H_\perp}={H_-}|_{a^\perp}$, ${D_\perp}=\det {H_\perp}$, and $h=t+s/4$, where $s$ is the total outside its frame.
If $w_e=R\omega_e+\zeta_e$ for $e\perp a$, define
\[
 E_\perp=\sum_{e\perp a}\omega_e\operatorname{ent}_e({H_\perp}).
\]
We bound $E_\perp+(D_\perp-uvh)\log2$ through the spanning-tree law of the six projected generators of $K_4$, and $E_\perp$ itself through their variance.
These complementary estimates apply in the two core ranges of Theorem~\ref{thm:maximal-weight-entropy}.

A pair shadow deletes one edge uniformly from a three-edge set, and its singleton shadow deletes one further edge.
Write $\operatorname{Sh}(p)=-\sum p\log p$ for Shannon entropy, with natural logarithms and $0\log0=0$.

The tree references are projection determinantal measures~\cite{Lyons2003}.
Cesaroni--Novaga~\cite[Sections~5.1 and~6.2]{CesaroniNovaga2026} contract entropy between product-weighted laws through singleton marginals.
Here facet cofactors require the pair shadow and an arbitrary comparison law.
The required contraction is a case of entropic independence~\cite{AnariEtAl2021}; we give a direct rank-two proof.
The sharp constant and equality case require the additional constraint imposed by product weights and the optimization below.

\begin{proposition}
\label{prop:sharp-K4-product-entropy}
For nonnegative edge weights $y_e$ on $K_4$ with $Z=\sum_{T\text{ tree}}y_T>0$, let $\mu_3(T)=y_T/Z$ and let $\mu_2$ be its uniform pair shadow.
Then
\[
  3\operatorname{Sh}(\mu_2)-2\operatorname{Sh}(\mu_3)+2\log2-3\log3
  \ge\frac94\log\frac43.
\]
Equality holds exactly when the six edge weights are equal up to a common scale.
\end{proposition}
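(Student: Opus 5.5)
The plan is to rewrite the left side through a conditional entropy, reduce to an optimization over the six edge weights, and locate its minimum at the uniform point.

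\textbf{Rewriting the left side.} Couple $\mu_3$ with the uniformly deleted edge. Draw $T\sim\mu_3$, delete one of its three edges uniformly, and let $P$ be the remaining pair, so that $P\sim\mu_2$.
\begin{itemize}
\item The pair $(T,P)$ determines the deleted edge, so its Shannon entropy equals both $\operatorname{Sh}(\mu_3)+\log3$ and $\operatorname{Sh}(\mu_2)+\operatorname{Sh}(T\mid P)$.
\item Substituting, the left side becomes
\[
 \Phi(y)=\operatorname{Sh}(\mu_3)+2\log2-3\operatorname{Sh}(T\mid P).
\]
\item The claim is then $\operatorname{Sh}(\mu_3)-3\operatorname{Sh}(T\mid P)\ge\frac94\log\frac43-2\log2$.
\end{itemize}
This form isolates where the product structure enters. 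Given a pair $p$, the conditional law of $T$ is supported on the completions $p\cup\{f\}$. By the incidence count of Lemma~\ref{lem:frame-geometry}, there are four completions for a perfect matching and three for an adjacent pair. The conditional law is proportional to $y_f$, independently of $y_p$.

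\textbf{Checking the uniform case.} At equal weights $\mu_3$ is uniform on the $16$ trees. The pair law is $\mu_2=1/12$ on each of the three matchings and $1/16$ on each of the twelve adjacent pairs. A direct computation gives exactly $\frac94\log\frac43$, which confirms the constant and shows the uniform point is a candidate minimizer.

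\textbf{Global minimization.} The function $\Phi(y)$ is scale invariant and continuous on the closed simplex of weights, restricted to $Z>0$, i.e.\ to supports containing a spanning tree. I would minimize it in three steps.
\begin{enumerate}
\item \emph{Interior critical points.} Write $y_e=e^{s_e}$. The derivative of $\operatorname{Sh}(\mu_3)$ in $s_e$ is $-\operatorname{Cov}_{\mu_3}(\1_{e\in T},\log\mu_3)$. The derivative of the conditional term is a covariance under the conditional completion laws. I would show the gradient vanishes only where all $s_e$ agree. The natural route is to average over the $S_4$-symmetry of $K_4$ and use that $\Phi$ is invariant. I would then prove that $\Phi$ is convex along each symmetry-orbit-averaging direction, or more concretely along the two-parameter families (one matching heavier, one vertex star heavier) to which any critical point can be reduced. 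The Hessian at the uniform point must be positive definite on $\Sym$-traceless directions. This gives strictness, and hence the equality statement.
\item \emph{Boundary faces.} When some $y_e\to0$, the tree law degenerates to a law on spanning trees of $K_4$ minus the deleted edges: $8$ trees when one edge is removed, fewer when more are removed, down to a single tree. I would enumerate these finitely many supports up to isomorphism. On each face I would repeat the bound, either recursively as a lower-dimensional optimization or by a crude estimate. For instance $\operatorname{Sh}(T\mid P)\le\log3$ on adjacent pairs combined with the explicit smaller support. The aim is to show the infimum on each face strictly exceeds $\frac94\log\frac43$.
\item \emph{Conclusion.} Steps 1 and 2 together give the global minimum at the uniform point.
\end{enumerate}

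\textbf{Main obstacle.} I expect the interior step to be the hardest: excluding non-uniform critical points in the five-dimensional projective weight space. The negative coefficient $-3\operatorname{Sh}(T\mid P)$ prevents a plain concavity argument. If the symmetry reduction fails, my fallback is the rank-two contraction mentioned before the statement. I would bound $\operatorname{Sh}(T\mid P)$ above by a weighted sum of the singleton-shadow entropies, using entropic independence of the determinantal tree measure. That reduces everything to a function of the six edge marginals $\Pr_{\mu_3}(e\in T)$, which sum to $3$. The result is an explicit inequality in few variables, optimized by Lagrange multipliers with equality at equal marginals. This in turn forces equal weights, since the marginals determine the product weights up to scale.
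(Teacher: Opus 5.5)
Your preliminary steps are correct. The coupling gives $\operatorname{Sh}(\mu_2)=\operatorname{Sh}(\mu_3)+\log3-\operatorname{Sh}(T\mid P)$, so the left side equals $\operatorname{Sh}(\mu_3)+2\log2-3\operatorname{Sh}(T\mid P)$. The completion counts $4$ and $3$ are right. The uniform law ($\mu_2=1/12$ on matchings, $1/16$ on adjacent pairs) gives exactly $\frac94\log\frac43$.

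\textbf{The main plan leaves the global minimization unproved.} You assert, but do not argue, that the gradient vanishes only at equal weights. Invariance under the $S_4$-action does not force critical points of a nonconvex function to be symmetric. Nothing in the plan reduces an arbitrary critical point to your two-parameter families. A positive-definite Hessian at the uniform point gives only strict local minimality, not the global bound or the equality case.

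The boundary step is also incomplete. The set $\{Z>0\}$ in the simplex is not compact. Besides deletion faces, $\mu_3$ can converge to product laws on contraction faces. For example, one dominant edge weight gives the product law on the eight trees through that edge, and weights concentrating on a triangle give another such limit. These limits and multi-scale mixtures of them must also be bounded, and your ``crude estimates'' are never specified.

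\textbf{The fallback does not close the gap.} For product-weighted laws the edge marginals already determine $\mu_3$, so reducing to edge marginals is no reduction. If the reference in the entropic-independence inequality $2\operatorname{KL}(\mu_3\|\nu_3)\ge3\operatorname{KL}(\mu_2\|\nu_2)$ may be an arbitrary product law, the choice $\nu_3=\mu_3$ turns the resulting bound into an identity with no content.

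The paper's argument uses two ingredients that your proposal does not identify.
\begin{enumerate}
\item The reference is restricted to equal weights $k_i$ within each opposite pair. Completing weights then depend only on whether a pair is adjacent or is opposite pair $i$. Expanding and optimizing over the $k_i$ gives the lower bound $\Phi(\mathbf p^{\rm opp})=(3-p_{\rm opp})\log\frac{4}{3-p_{\rm opp}}+\sum_i(1-p_i^{\rm opp})\log(1-p_i^{\rm opp})$. This is a function of the three opposite-pair probabilities and holds for every tree law.
\item That bound alone is too weak: at $p_i^{\rm opp}=1/3$ it equals $2\log\frac43<\frac94\log\frac43$. The product structure must therefore enter through the compatibility inequality $p_{\rm star}^2\ge4p_1^{\rm opp}p_2^{\rm opp}p_3^{\rm opp}$. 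The paper proves it by comparing arithmetic and geometric means within opposite pairs.
\end{enumerate}
Only minimizing $\Phi$ under this constraint gives the sharp constant and forces equal weights at equality. That minimization uses convexity for $p_{\rm star}\ge1/4$, a boundary estimate, and a two-value Lagrange analysis reducing to the one-variable $\varphi(z)$, minimized at $z=1$.
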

\begin{proof}
For any tree law $\mu_3$ on $K_4$ with pair shadow $\mu_2$, let $p_i^{\mathrm{opp}}$ be the probability of containing opposite pair $i$, and set $p_{\mathrm{opp}}=\sum_i p_i^{\mathrm{opp}}$.
Every tree is a star or contains exactly one opposite pair. We first prove
\[
  \begin{aligned}
  E_{\mathrm{del}}(\mu_3)&:=3\operatorname{Sh}(\mu_2)-2\operatorname{Sh}(\mu_3)+2\log2-3\log3\\
  &\ge\Phi(\mathbf p^{\mathrm{opp}})
   :=(3-p_{\mathrm{opp}})\log\frac4{3-p_{\mathrm{opp}}}
  +\sum_i(1-p_i^{\mathrm{opp}})\log(1-p_i^{\mathrm{opp}}).
  \end{aligned}
\]
If a pair law $\nu$ and its singleton shadow $\pi$ satisfy $\sum_{i<j}\nu_{ij}z_iz_j\le(\sum_i\pi_i z_i)^2$ for $z_i>0$, testing the entropy variational formula with $\log z_i+\log z_j$ gives the rank-two principle
\[
 \operatorname{KL}(\mu\|\nu)
 \ge2\sup_{z>0}\left\{\mathbb E_{\mu_1}\log z-
                    \log\sum_i\pi_i z_i\right\}
 =2\operatorname{KL}(\mu_1\|\pi).
\]
Here $\mu$ is any pair law and $\mu_1$ its singleton shadow.
For a positive product-weighted tree reference $\nu_3$ with weights $\rho_i$, write $\mu_j,\nu_j$ for the shadows and $\operatorname{KL}_j=\operatorname{KL}(\mu_j\|\nu_j)$.
Using reduced incidence vectors $r_i$, define
\[
 \Sigma_\rho=\sum_i\rho_ir_ir_i^{\mathsf T},\quad\;
 \xi_i=\sqrt{\rho_i}\Sigma_\rho^{-1/2}r_i,\quad\;
 \mathsf C(z)=\sum_i z_i\xi_i\xi_i^{\mathsf T}.
\]
Cauchy--Binet, $\mathsf C(\1)=\Id_3$, and the tree determinants $\pm1$ give the reference polynomial $\det\mathsf C(z)$.
The linear coefficient of $\det(\mathsf C(z)+\varepsilon\Id_3)$ gives pair shadow $e_2(\mathsf C(z))/3$, with singleton mean $\tr\mathsf C(z)/3$.
Thus $e_2(\mathsf C)\le(\tr\mathsf C)^2/3$ yields $\operatorname{KL}_2\ge2\operatorname{KL}_1$.

Conditioning on a uniform tree edge $e$, whose reference probability is $\nu_1(e)=\|\xi_e\|^2/3$, leaves the pair law
\[
 \nu_{2\mid e}(\{i,j\})
 =\frac{\nu_3(\{e,i,j\})}{\|\xi_e\|^2}
 =\det_{\xi_e^\perp}(\Pi_{\xi_e^\perp}\xi_i,\Pi_{\xi_e^\perp}\xi_j)^2,
\]
where $\Pi_{\xi_e^\perp}$ projects onto $\xi_e^\perp$.
The identity $\sum_{i\ne e}(\Pi_{\xi_e^\perp}\xi_i)(\Pi_{\xi_e^\perp}\xi_i)^{\mathsf T}=\Id_2$ and the two-dimensional determinant inequality give the rank-two principle conditionally.
Distinguishing a uniform edge preserves relative entropy, so the chain rule, summing only over $\mu_1(e)>0$, gives
\[
 \operatorname{KL}_3-\operatorname{KL}_1
 =\sum_e\mu_1(e)\operatorname{KL}(\mu_{2\mid e}\|\nu_{2\mid e})
 \ge2\sum_e\mu_1(e)\operatorname{KL}(\mu_{1\mid e}\|\nu_{1\mid e})
 =2(\operatorname{KL}_2-\operatorname{KL}_1).
\]
Selecting a second uniform edge gives the same law as distinguishing an edge of the pair shadow, proving the last equality.
Combining the bounds yields $2\operatorname{KL}_3\ge3\operatorname{KL}_2$.

Give opposite pair $i$ equal reference weights $k_i>0$, with $k_{\mathrm{tot}}=\sum_i k_i$ and tree partition $Z_k=2(k_1+k_2)(k_2+k_3)(k_3+k_1)$.
The completing weights are $2(k_{\mathrm{tot}}-k_i)$ for opposite pair $i$ and $k_{\mathrm{tot}}$ for adjacent pairs, whose respective masses under $3\mu_2$ are $p_i^{\mathrm{opp}}$ and $3-p_{\mathrm{opp}}$ in total.
Expanding $2\operatorname{KL}_3-3\operatorname{KL}_2\ge0$ cancels edge logarithms by the $2/3$ retention probability and gives
\[
 {E_{\mathrm{del}}}(\mu_3)\ge\log(4Z_k)
 -\sum_i{p_i^{\mathrm{opp}}}\log[2(k_{\mathrm{tot}}-k_i)]
 -(3-p_{\mathrm{opp}})\log k_{\mathrm{tot}}.
\]
Taking $k_i/k_{\mathrm{tot}}=(1-p_{\mathrm{opp}}+2{p_i^{\mathrm{opp}}})/(3-p_{\mathrm{opp}})$ proves the bound by $\Phi$. Zero reference weights follow by a limit.

For the product-weighted law, the star probability $p_{\mathrm{star}}=1-p_{\mathrm{opp}}$ satisfies
\[
  {p_{\mathrm{star}}}^2\ge4{p_1^{\mathrm{opp}}}{p_2^{\mathrm{opp}}}{p_3^{\mathrm{opp}}}.
\]
First assume positive weights. Let ${r_i^{\mathrm{opp}}}$ be the geometric mean in opposite pair $i$ and $c_i\ge1$ the ratio of its arithmetic and geometric means.
Write the total star weight as $Z_{\mathrm{star}}=4{r_1^{\mathrm{opp}}}{r_2^{\mathrm{opp}}}{r_3^{\mathrm{opp}}}c_{\mathrm{st}}$.
Grouping the stars according to their edge in pair $i$ and applying AM--GM to each remaining two-term sum gives $c_{\mathrm{st}}\ge c_i$ for every $i$.
The tree weight and shape probabilities are
\[
  Z=2\sum_{i\ne j}{r_i^{\mathrm{opp}}}^2{r_j^{\mathrm{opp}}}c_j
      +4{r_1^{\mathrm{opp}}}{r_2^{\mathrm{opp}}}{r_3^{\mathrm{opp}}}c_{\mathrm{st}},
\]
\[
  p_i^{\mathrm{opp}}=\frac{2{r_i^{\mathrm{opp}}}^2
       ({r_j^{\mathrm{opp}}}c_j+{r_k^{\mathrm{opp}}}c_k)}{Z} \quad\text{and}\quad
  p_{\mathrm{star}}=\frac{Z_{\mathrm{star}}}{Z}.
\]
Consequently
\[
 \begin{aligned}
 c_{\mathrm{st}}^2Z-2\prod_{i<j}
          ({r_i^{\mathrm{opp}}}c_i+{r_j^{\mathrm{opp}}}c_j)
 &= 2\sum_{i\ne j}{r_i^{\mathrm{opp}}}^2{r_j^{\mathrm{opp}}}c_j
                       (c_{\mathrm{st}}^2-c_i^2)
 +4{r_1^{\mathrm{opp}}}{r_2^{\mathrm{opp}}}{r_3^{\mathrm{opp}}}
                       (c_{\mathrm{st}}^3-c_1c_2c_3) \\
                       &\ge0.
 \end{aligned}
\]
Dividing by $Z^3/(16{r_1^{\mathrm{opp}}}^2{r_2^{\mathrm{opp}}}^2{r_3^{\mathrm{opp}}}^2)$ proves compatibility.
Equality forces $c_{\mathrm{st}}=c_i$ for all $i$. The two AM--GM equalities for each pair force internal balance in the other pairs.
Continuity includes zero weights.

If ${p_{\mathrm{star}}}\ge1/4$, convexity gives $\Phi\ge(2+{p_{\mathrm{star}}})\log(4/3)\ge(9/4)\log(4/3)$.
For $0\le{p_{\mathrm{star}}}<1/4$, minimize $\Phi$ at fixed ${p_{\mathrm{star}}}$ subject to the product constraint.
On the coordinate boundary, equalizing the other two coordinates gives
\[
  \Phi\ge(2+{p_{\mathrm{star}}})\log4+(1+{p_{\mathrm{star}}})\log\frac{1+{p_{\mathrm{star}}}}{2}
  -(2+{p_{\mathrm{star}}})\log(2+{p_{\mathrm{star}}})\ge\log2>\frac94\log\frac43.
\]
The middle expression has derivative $\log[2(1+{p_{\mathrm{star}}})/(2+{p_{\mathrm{star}}})]\ge0$.
At an interior minimum the product constraint is active, since otherwise the infeasible all-equal vector is forced.
The multiplier equation $x(\log(1-x)-\lambda)=\gamma$ and strict concavity permit at most two coordinate values.
After relabeling,
\[
  {p_1^{\mathrm{opp}}}={p_2^{\mathrm{opp}}}=\frac1{2(1+z)},
  \quad{p_3^{\mathrm{opp}}}=\frac{z^2}{(1+z)^2},
  \quad{p_{\mathrm{star}}}=\frac{z}{(1+z)^2},\quad\text{with } z>0.
\]
The resulting value of $\Phi$ is
\[
  \varphi(z)=\frac{1+2z}{(1+z)^2}
  \bigl[(z+1)\log(z+1)-(z+2)\log(z+2)+(z+3)\log2\bigr].
\]
For $\psi(z)=(1+z)^3\varphi'(z)$ one has
\[
  \begin{aligned}
  \psi(z)&=(z+1)\log(z+1)+(z-1)\log(z+2)+(1-3z)\log2,
  \\
  \psi''(z)&=\frac1{z+1}+\frac1{z+2}+\frac3{(z+2)^2}>0.
  \end{aligned}
\]
Since $\psi(0)=\psi(1)=0$, the unique minimum is $\varphi(1)=(9/4)\log(4/3)$.
Equality requires $p_{\mathrm{star}}=p_i^{\mathrm{opp}}=1/4$, forcing all six weights positive.
Compatibility equality balances each opposite pair, and the equal shape probabilities then equate the three pair weights.
\end{proof}

The transverse calibration uses the following basis identity.

\begin{proposition}
\label{prop:basis-entropy-identity}
Let $H=\sum_i x_iv_iv_i^{\mathsf T}>0$, where $v_i\in\R^d$, $x_i\ge0$, and every nonzero squared basis determinant equals $\gamma>0$.
The product-weighted basis law and its uniform deletion shadow are
\[
 \mu_d(B)=\frac{\gamma x_B}{{D_H}}\quad\text{and}\quad
 \mu_{d-1}(J)=\frac1d\sum_{B\supset J}\mu_d(B).
\]
For each independent $(d-1)$-set $J$, let $g_J$ be its Gram determinant and $n_J$ a unit normal.
The indexed cofactor weights $w_J=g_Jx_J$ (with repeated normal lines allowed) and marks $\kappa_J=cg_J$, $c>0$, give the compensated entropy
\[
 F_\kappa=d\operatorname{Sh}(\mu_{d-1})-(d-1)\operatorname{Sh}(\mu_d)
 -d\log d-(d-1)\log\gamma-d\log c.
\]
\end{proposition}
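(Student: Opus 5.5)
The plan is to compute every quantity in Theorem~\ref{lem:balanced-entropy-bridge} explicitly from Cauchy--Binet and then check that all the $\log D_H$ terms cancel. Throughout, only terms with $x_J>0$ (respectively $x_B>0$) are kept, consistent with the convention that zero weights are omitted.

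\emph{Step 1: cofactor balance.} For each independent $(d-1)$-set $J$, let $u_J$ be the Hodge dual of $\bigwedge_{i\in J}v_i$. Then $u_J$ is a normal vector to $\operatorname{span}\{v_i:i\in J\}$, with $\|u_J\|^2=g_J$, so $u_J=\pm\sqrt{g_J}\,n_J$. The adjugate form of Cauchy--Binet, applied to $H=\sum_ix_iv_iv_i^{\mathsf T}$, gives
\[
\operatorname{adj}H=\sum_J x_J\,u_Ju_J^{\mathsf T}=\sum_J g_Jx_J\,\Pi_{n_J}.
\]
This is exactly the balance $\sum_J w_J\Pi_{n_J}=\operatorname{adj}H$, so Theorem~\ref{lem:balanced-entropy-bridge} applies. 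The ordinary Cauchy--Binet formula gives $D_H=\gamma\sum_B x_B$, which confirms that $\mu_d$ is a probability law.

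\emph{Step 2: identifying $\lambda_J$.} Write $h_J=n_J^{\mathsf T}Hn_J=\sum_{i\notin J}x_i(n_J\cdot v_i)^2$. For $i\notin J$, one has $g_J(n_J\cdot v_i)^2=\det[v_J,v_i]^2$. This equals $\gamma$ when $B=J\cup\{i\}$ is a basis and $0$ otherwise. Hence
\[
w_Jh_J=\gamma\sum_{B\supset J}x_B=d\,D_H\,\mu_{d-1}(J).
\]
It follows that $\lambda_J=d\,\mu_{d-1}(J)$ and $h_J=d\,\mu_{d-1}(J)D_H/(g_Jx_J)$. As a consistency check, $\sum_J\lambda_J=d$.

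\emph{Step 3: assembling $F_\kappa$.} By Theorem~\ref{lem:balanced-entropy-bridge},
\[
F_\kappa=\log D_H-\sum_J\lambda_J\log(h_J\kappa_J).
\]
Substituting $\kappa_J=cg_J$ and the formula for $h_J$, the factor $g_J$ cancels, and we get
\[
F_\kappa=\log D_H-d\log d-d\log D_H-d\log c+d\operatorname{Sh}(\mu_{d-1})+\sum_J\lambda_J\log x_J .
\]
For the last sum, expand $\mu_{d-1}$ over the bases containing $J$ and exchange the order of summation. Each basis $B$ contains exactly $d$ of the sets $J$, and $\sum_{J\subset B}\log x_J=(d-1)\log x_B$. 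This gives
\[
\sum_J\lambda_J\log x_J=\sum_J\sum_{B\supset J}\mu_d(B)\log x_J=(d-1)\sum_B\mu_d(B)\log x_B .
\]
Now insert $\log x_B=\log\mu_d(B)+\log D_H-\log\gamma$. The last sum becomes $(d-1)\bigl(-\operatorname{Sh}(\mu_d)+\log D_H-\log\gamma\bigr)$. The coefficient of $\log D_H$ is then $1-d+(d-1)=0$, and what remains is exactly the claimed formula for $F_\kappa$.

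The only delicate point is the bookkeeping of supports. A set $J$ has $\lambda_J>0$ exactly when $x_J>0$ and $J$ extends to a positive-weight basis. Sets with $x_J>0$ but no such extension have $h_J$ possibly positive but $\lambda_J=0$, so they contribute nothing to any of the sums, and all logarithms taken above are finite. The double-counting exchange in Step 3 is where care is needed. The rest is direct substitution.
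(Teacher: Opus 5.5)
Your proof is correct and follows essentially the paper's route: Cauchy--Binet gives cofactor balance and $D_H=\gamma\sum_Bx_B$, adjoining one vector gives $\lambda_J=d\mu_{d-1}(J)$, and the generator logarithms cancel by the double count $\sum_{J\subset B}\log x_J=(d-1)\log x_B$, which is the paper's marginal identity $d\Pr_{\mu_{d-1}}(i\in J)=(d-1)\Pr_{\mu_d}(i\in B)$ summed against $\log x_i$; you also handle zero weights directly rather than by continuity. One small correction to your support remark: $H>0$ forces $h_J>0$, so every $J$ with $x_J>0$ has $\lambda_J=w_Jh_J/D_H>0$ and therefore extends to a positive-weight basis, which means the case you set aside cannot occur (harmlessly, since it would contribute nothing anyway).
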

\begin{proof}
For positive weights let $Z=\sum_{B\text{ basis}}x_B$ and $s_J=\sum_{i:J\cup\{i\}\text{ basis}}x_i$.
Cauchy--Binet gives $D_H=\gamma Z$ and $\sum_Jw_J\Pi_{n_J}=\operatorname{adj}H$.
Adjoining one vector to $J$ gives
\[
 h_J=n_J^{\mathsf T}Hn_J=\frac{\gamma s_J}{g_J}\quad\text{and}\quad
 \lambda_J=\frac{w_Jh_J}{{D_H}}=d\mu_{d-1}(J).
\]
Uniform deletion gives $d\Pr_{\mu_{d-1}}(i\in J)=(d-1)\Pr_{\mu_d}(i\in B)$, so the individual generator logarithms cancel in
\[
 d\operatorname{Sh}(\mu_{d-1})-(d-1)\operatorname{Sh}(\mu_d)
 =d\log d+\log Z-d\sum_J\mu_{d-1}(J)\log s_J.
\]
Substituting $\kappa_Jh_J=c\gamma s_J$ and $D_H=\gamma Z$ into $F_\kappa=\log D_H-\sum_J\lambda_J\log(\kappa_Jh_J)$ proves the identity.
Continuity includes zero weights whenever $D_H>0$.
\end{proof}

The next proposition identifies the transverse entropy with the quantity of Proposition~\ref{prop:sharp-K4-product-entropy}.
Cauchy--Binet assigns every projected basis the product of its generator weights, so the transverse law is product-weighted.
The companion pair has Gram determinant one instead of $1/2$, and this exceptional factor produces the marked term $-uvh\log2$.

\begin{proposition}
\label{prop:transverse-K4-calibration}
For the transverse metric $H_\perp$, whose determinant is $D_\perp$, use the two companion rays and the missing frame direction as coordinates.
Then
\[
  {H_\perp}:={H_-}|_{a^\perp}=\operatorname{diag}(u,v,0)+W_{\mathrm{tet}},
  \quad\text{where }\; W_{\mathrm{tet}}=\sum_{i=1}^4z_ib_i^{\perp}\bigl(b_i^{\perp}\bigr)^{\mathsf T}\;\text{ with }\; z_i\ge0,
\]
where
\[
  b_1^{\perp}=\frac12(1,1,1),\quad b_2^{\perp}=\frac12(1,-1,-1),
  \quad b_3^{\perp}=\frac12(-1,1,-1),
  \quad b_4^{\perp}=\frac12(-1,-1,1).
\]
The diagonal of $W_{\mathrm{tet}}$ is $(h,h,h)$, and
\[
  \sum_{e\perp a}\omega_e\Pi_e=\operatorname{adj}{H_\perp}
  \quad\text{and}\quad
  \frac{E_\perp+({D_\perp}-uvh)\log2}{{D_\perp}}
  \ge\frac94\log\frac43.
\]
The constant is sharp.
\end{proposition}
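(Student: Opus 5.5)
The plan is to reduce the statement to Proposition~\ref{prop:basis-entropy-identity} in dimension $d=3$. The resulting compensated entropy is then exactly the quantity bounded in Proposition~\ref{prop:sharp-K4-product-entropy}.

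\emph{Step 1: the coordinate form of $H_\perp$.} Write $H_-=t\Id+\sum_{i\ne a}x_i\Pi_{a_i}$ and restrict it to $a^\perp$. Use the orthonormal coordinates given by the two companion rays and the omitted vector of the frame of $a$.
\begin{itemize}
\item The companion rays contribute $\operatorname{diag}(u,v,0)$.
\item By Lemma~\ref{lem:frame-geometry}, every ray of another frame has squared inner product $1/4$ with each coordinate axis. After restriction, such a ray projects to a multiple of one of the four tetrahedral vectors $b_i^\perp$; I would check this sign pattern directly from the coordinate table.
\item The core part is handled by $t\Id_3=t\sum_i b_i^\perp(b_i^\perp)^{\mathsf T}$, since four tetrahedral vectors of squared length $3/4$ resolve the identity. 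So $t$ is absorbed into every $z_i$.
\end{itemize}
This gives the decomposition $H_\perp=\operatorname{diag}(u,v,0)+W_{\mathrm{tet}}$ with $z_i\ge0$. Since each $(b_i^\perp)_k^2=1/4$, the diagonal of $W_{\mathrm{tet}}$ is $t+s/4=h$.

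\emph{Step 2: balance and the $K_4$ structure.} The identity $\sum_{e\perp a}\omega_e\Pi_e=\operatorname{adj}H_\perp$ is the differentiated balance already obtained in the proof of Lemma~\ref{lem:marked-rank-one}. Next, identify the six generators $e_1,e_2,e_3,b_1^\perp,\dots,b_4^\perp$ with the $A_3$ root configuration, that is, with $\Gr(K_4)$ after a linear change. The third axis $e_3$ carries weight $0$, and the omitted direction has no retained ray. The opposite pairs of $K_4$ are then $\{e_k,\text{a pair of }b\text{'s}\}$-type pairings.

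Using the triple counts of Lemma~\ref{lem:frame-geometry}, I would verify three facts.
\begin{itemize}
\item Every basis has the same squared determinant $\gamma$.
\item The Gram determinants $g_J$ of independent pairs equal $1/2$ times a constant, except for the companion pair $\{e_1,e_2\}$, where $g_J$ is twice as large.
\item The $\omega_e$ coincide with the indexed cofactor weights $w_J=g_Jx_J$.
\end{itemize}
The last fact should follow because both sides resolve $\operatorname{adj}H_\perp$ by Cauchy--Binet and the normal lines match the old normals perpendicular to $a$.

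\emph{Step 3: calibration.} Apply Proposition~\ref{prop:basis-entropy-identity} with marks $\kappa_J=cg_J$, choosing $c$ so that $\kappa_J=1$ on generic pairs and $\kappa_J=2$ on the companion pair. Then
\[
E_\perp/D_\perp=F_\kappa+\lambda_{\{1,2\}}\log2.
\]
Here $\lambda_{\{1,2\}}=w_{12}h_{12}/D_\perp$ with $w_{12}=uv$ and $h_{12}=h$, the $e_3$ diagonal entry.

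The constants in Proposition~\ref{prop:basis-entropy-identity} should combine into $2\log2-3\log3$. Then
\[
\frac{E_\perp+(D_\perp-uvh)\log2}{D_\perp}=3\operatorname{Sh}(\mu_2)-2\operatorname{Sh}(\mu_3)+2\log2-3\log3,
\]
where the extra $D_\perp\log2$ is absorbed into the normalization constant. Proposition~\ref{prop:sharp-K4-product-entropy} then gives $\tfrac94\log\tfrac43$.

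\emph{Step 4: sharpness.} Choose $u,v,z_i$ so that the six $K_4$ edge weights are equal, or approach equality in the limit if the $e_3$ weight forces a degenerate slot. Then invoke the equality case of Proposition~\ref{prop:sharp-K4-product-entropy}.

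\emph{Main obstacle.} I expect the hardest part to be the bookkeeping in Steps 2--3: fixing the exact constants $\gamma$, $g_J$ and $c$ so that the constant terms collapse to $2\log2-3\log3$, and confirming that the exceptional companion Gram factor yields precisely $-uvh\log2$. I would first check this at the regular core $H=\Id$, where all quantities are explicit, and then propagate by homogeneity.
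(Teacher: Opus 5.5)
Your route matches the paper's. You write $H_\perp=\operatorname{diag}(u,v,0)+W_{\mathrm{tet}}$, apply Proposition~\ref{prop:basis-entropy-identity} with $d=3$, $\gamma=1/4$, $c=2$, and then invoke Proposition~\ref{prop:sharp-K4-product-entropy}. Your constants do collapse to $2\log2-3\log3$ once the extra $\log2$ is added. Two points in Step~2 are wrong or incomplete, however.

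\textbf{The generator list.} The $K_4$ configuration is the six vectors $e_1,e_2,b_1^\perp,\dots,b_4^\perp$, with weights $u,v,z_1,\dots,z_4$. They represent the edges $12,34,13,23,14,24$ through $b_1^\perp+b_2^\perp=-b_3^\perp-b_4^\perp=e_1$ and $b_1^\perp+b_3^\perp=-b_2^\perp-b_4^\perp=e_2$. The opposite pairs are $\{e_1,e_2\}$, $\{b_1^\perp,b_4^\perp\}$ and $\{b_2^\perp,b_3^\perp\}$.
\begin{itemize}
\item The missing direction $e_3$ is not a generator. Your list has seven vectors, and $\det[e_1,e_2,e_3]^2=1\neq1/4$ would violate unimodularity. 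Instead, $e_3$ is the normal line of the opposite pair $\{e_1,e_2\}$, i.e.\ the marked normal.
\item Treating $e_3$ as an edge slot with forced weight zero is what produced your hedge in Step~4. If that were true, the equality case of Proposition~\ref{prop:sharp-K4-product-entropy} could not be approached at all, let alone attained.
\item With the correct list, equality is attained outright. Companions of weight $t$ and zero outside weights give $u=v=z_i=t$.
\end{itemize}

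\textbf{The identification $\omega_e=w_J$.} Saying that both sides resolve $\operatorname{adj}H_\perp$ on the same normal lines is not enough. The pair normals fill seven lines: the six old normals $(e_i\pm e_j)/\sqrt2$ and $e_3$. Seven projectors in the six-dimensional space $\Sym(\R^3)$ are linearly dependent, for example $2\Pi_{e_3}=\sum_{\pm}\bigl(\Pi_{(e_1\pm e_3)/\sqrt2}+\Pi_{(e_2\pm e_3)/\sqrt2}-\Pi_{(e_1\pm e_2)/\sqrt2}\bigr)$. So two balanced resolutions can differ. To close the step:
\begin{itemize}
\item First match the weight on the marked line. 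It is $uv$ from the $R$-coefficient of the facet weight $x_{T_j}=Ruv$, and $g_{\{e_1,e_2\}}uv=uv$ from the pair.
\item Then use that the six old projectors form a basis of $\Sym(\R^3)$, so balance determines the remaining weights.
\end{itemize}
This is exactly how the paper closes the step.
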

\begin{proof}
Outside rays project onto the four lines $b_i^\perp$. Since $\sum_i b_i^\perp(b_i^\perp)^{\mathsf T}=\Id_3$, the core adds $t$ to each $z_i$, giving diagonal $h=t+s/4$.
The coefficient of $R$ in Lemma~\ref{lem:facet-cofactors} gives the transverse adjugate resolution: the missing frame normal is the third coordinate with cofactor $uv$, and the six old normals are $(e_i\pm e_j)/\sqrt2$.

The two coordinate generators and $b_1^{\perp},b_2^{\perp},b_3^{\perp},b_4^{\perp}$ represent the edges $12,34,13,23,14,24$ of $K_4$, respectively.
Indeed, their four triangle relations follow from
\[
  b_1^{\perp}+b_2^{\perp}=-b_3^{\perp}-b_4^{\perp}=(1,0,0)
  \quad\text{and}\quad
  b_1^{\perp}+b_3^{\perp}=-b_2^{\perp}-b_4^{\perp}=(0,1,0).
\]
Tree triples have squared determinant $1/4$, and pairs have Gram determinant $1/2$ except the opposite coordinate pair, whose determinant is one.
Let $y=(u,v,z_1,z_2,z_3,z_4)$, set $Z=\sum_{T\text{ tree}}y_T=4{D_\perp}$, and let $\mu_3(T)=y_T/Z$.

The pair-cofactor expansion equals $\omega$. Both assign $uv$ to the marked normal, and balance determines the other weights because the six old projectors form a basis of the symmetric $3\times3$ matrices.
For the pair Gram determinants $g_J$, the marks $\kappa_J=2g_J$ are one except at the coordinate pair, of cofactor mass $uvh/D_\perp$.
Proposition~\ref{prop:basis-entropy-identity} with $d=3$, $\gamma=1/4$, $c=2$, and pair shadow $\mu_2$ gives
\[
  \frac{E_\perp+({D_\perp}-uvh)\log2}{{D_\perp}}
  =3\operatorname{Sh}(\mu_2)-2\operatorname{Sh}(\mu_3)+2\log2-3\log3.
\]
Proposition~\ref{prop:sharp-K4-product-entropy} proves the bound, including zero weights by continuity. Equal positive weights attain equality.
\end{proof}

In the range $t\ge2R/3$ of Theorem~\ref{thm:maximal-weight-entropy} we need a bound on $E_\perp$ itself in units of $uvh$.
It follows from a variance bound and the curvature of $x\log x$.

\begin{proposition}
\label{prop:transverse-marked-variance}
For any $u,v,z_i\ge0$ in the preceding representation of $H_\perp$, use its pair-cofactor weights $\omega_e$, including the marked weight $uv$.
With $h$ the common diagonal of $W_{\mathrm{tet}}$,
\[
 V_\perp:=\sum_e\omega_e\operatorname{var}_e(H_\perp)
 \ge\frac{uvh}{2}\max_i\sum_j|(H_\perp)_{ij}|
 \quad\text{and}\quad E_\perp\ge\frac{uvh}{4}.
\]
Both constants are sharp.
\end{proposition}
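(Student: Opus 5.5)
The plan is to prove the variance inequality first, by explicit linear algebra in the coordinates of Proposition~\ref{prop:transverse-K4-calibration}. The entropy inequality will then follow from the curvature of $x\log x$ together with a spectral bound. Write $H_\perp=\operatorname{diag}(u,v,0)+W_{\mathrm{tet}}$. Its diagonal is $(u+h,v+h,h)$, and its off-diagonal entries are
\[
 (H_\perp)_{12}=\tfrac14(z_1-z_2-z_3+z_4),\quad
 (H_\perp)_{13}=\tfrac14(z_1-z_2+z_3-z_4),\quad
 (H_\perp)_{23}=\tfrac14(z_1+z_2-z_3-z_4).
\]
In particular every off-diagonal entry is bounded by $h$ in absolute value. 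The weights $\omega_e$ are the pair cofactors $g_Jy_J$ of the six generators $(u,v,z_1,\dots,z_4)$, grouped by normal line. The marked normal $e_3$ carries weight $uv$. Each old normal $(e_i\pm e_j)/\sqrt2$ receives the pairs $\{u,b_k\}$, $\{v,b_k\}$ and $\{b_k,b_l\}$ that it is perpendicular to. I will write these seven weights explicitly as bilinear expressions in $y$.

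For the variance bound I would use the elementary estimate
\[
 \operatorname{var}_e(A)=\|Ae\|^2-(e^{\mathsf T}Ae)^2\ge (f^{\mathsf T}Ae)^2
 \qquad\text{for every unit } f\perp e.
\]
I would keep only the terms of $V_\perp$ whose weights contain $u$ or $v$, which are bounded below by $uv$ times an expression in $h$ after using $\sum_k z_k=4h-4t\ge$ the relevant partial sums. The sign-flip symmetries $b_k\mapsto \pm b_{\sigma(k)}$ of the tetrahedron, together with the swap $u\leftrightarrow v$, reduce the problem to a fixed maximizing row $i$.

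The marked term contributes $uv\bigl((H_\perp)_{13}^2+(H_\perp)_{23}^2\bigr)$. The old normals $(e_1\pm e_3)/\sqrt2$ and $(e_2\pm e_3)/\sqrt2$ carry weights at least of order $u h$ and $v h$. Their variances, tested against suitable $f$, pick up $\tfrac12\bigl((H_\perp)_{ii}-(H_\perp)_{33}\pm\ldots\bigr)^2$ and the off-diagonal entries. Summing these lower bounds, I will aim to show that the total dominates $\tfrac{uvh}{2}\bigl(|(H_\perp)_{i1}|+|(H_\perp)_{i2}|+|(H_\perp)_{i3}|\bigr)$. The comparison should reduce to a quadratic-versus-linear inequality in the entries, closed by AM--GM using the entrywise bound $|(H_\perp)_{ij}|\le h$ and the fact that the diagonal is at least $h$.

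This step is the main obstacle. The right side is linear in the row sum while the variances are quadratic, so the weights must supply exactly one factor of $h$. Choosing the test vectors $f$ and the pair groupings so that this works uniformly, including when some $z_k$ vanish, is where I expect to iterate.

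For the entropy, apply Jensen to the spectral measure of $A$ in the state $e$. Since $(x\log x)''=1/x\ge1/\lambda_{\max}(A)$ on the spectrum, one gets
\[
 \operatorname{ent}_e(A)\ge \frac{\operatorname{var}_e(A)}{2\lambda_{\max}(A)}.
\]
Gershgorin gives $\lambda_{\max}(H_\perp)\le\max_i\sum_j|(H_\perp)_{ij}|$. Summing with weights $\omega_e$ then yields $E_\perp\ge V_\perp/(2\max_i\sum_j|(H_\perp)_{ij}|)\ge uvh/4$.

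For sharpness, I would test the degenerate limit $u=v\to0$ with all $z_k$ equal. There $H_\perp$ is nearly $h\Id$, every inequality used becomes asymptotically tight, and the ratios $V_\perp/(uvh\max_i\sum_j|(H_\perp)_{ij}|)$ and $E_\perp/(uvh)$ should tend to $1/2$ and $1/4$. I would confirm this by a first-order expansion in $u,v$.
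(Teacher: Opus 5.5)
Your entropy step (spectral Jensen with $(x\log x)''\ge 1/\lambda_{\max}$, then the Gershgorin row-sum bound) matches the paper, and so does your sharpness test ($W_{\mathrm{tet}}=h\Id$, $u=v=\varepsilon\downarrow0$). The gap is the variance inequality. You leave it open, and the reduction you propose for it fails exactly where the constant is sharp.

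Take $h=1$, $z_1=\dots=z_4=1$, $u=v=\varepsilon$, so $H_\perp=\operatorname{diag}(1+\varepsilon,1+\varepsilon,1)$. The marked term vanishes because $e_3$ is an eigenvector. The weight components carrying $u$ or $v$ come from the pairs $\{e_1,b_k^\perp\}$ and $\{e_2,b_k^\perp\}$. They have total mass $2\varepsilon$ on the normals $(e_2\pm e_3)/\sqrt2$ and $2\varepsilon$ on $(e_1\pm e_3)/\sqrt2$, and they meet variances $\varepsilon^2/4$. Everything you keep therefore sums to $\varepsilon^3$, while the right side is $\tfrac{\varepsilon^2}{2}(1+\varepsilon)$.

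The missing $\varepsilon^2/2$ comes from the tetrahedral pair weights $z_kz_l/2$ on those same four normals. These weights are of order $h^2$ and free of $u,v$, and they multiply variances of order $u^2$ and $v^2$; the factor $uv$ appears only after AM--GM. Two smaller points: here $\sum_k z_k=4h$, since the core is already absorbed into the $z_k$. Also, the third row is not symmetric to the first two, so it needs its own argument.

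Your suggested closing also points the wrong way. The bound $|(H_\perp)_{ij}|\le h$ gives $(H_\perp)_{ij}^2\le h|(H_\perp)_{ij}|$, so it cannot make quadratic variance terms dominate the linear off-diagonal terms in the row sum.

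The paper instead keeps every term and expands exactly in the correlations $r_{ij}$ of $W_{\mathrm{tet}}/h$. This gives $2V_\perp=uv(u+v)+c_{20}u^2+c_{02}v^2+c_{11}uv+c_{10}u+c_{01}v+c_{00}$, where $c_{11}$ can be negative ($c_{11}=-1$ at $W_{\mathrm{tet}}=\Id$). After sign changes making $r_{12},r_{13}\ge0$, it applies two AM--GM bounds:
\[
c_{20}u^2+c_{02}v^2\ge2uv\sqrt{c_{20}c_{02}},\qquad uv^2+c_{10}u\ge2uv\sqrt{c_{10}}.
\]
The second pairs terms of different degree in $(u,v)$, and it is what converts quadratic quantities into the linear terms $|r_{12}|+|r_{13}|$. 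What remains is the scalar inequality $c_{11}-1-r_{12}-r_{13}+2\sqrt{c_{20}c_{02}}+2\sqrt{c_{10}}\ge0$. It is tight at $r=0$, and the paper settles it with a Cauchy--Schwarz lower bound for $\sqrt{c_{10}}$ and an explicit positivity identity for the auxiliary polynomial $p_{\mathrm{aux}}$. The third row additionally uses $u^2v+c_{01}v\ge2uv\sqrt{c_{01}}$ and $c_{01}\ge2(|r_{12}|-|r_{23}|)^2$. Without bookkeeping of this kind your outline proves neither inequality, since the entropy bound rests on the variance bound.
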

\begin{proof}
Nonnegativity handles $uvh=0$. Otherwise, normalize $h=1$ and write
\[
  W_{\mathrm{tet}}=\begin{pmatrix}1&r_{12}&r_{13}\\r_{12}&1&r_{23}\\r_{13}&r_{23}&1\end{pmatrix},
  \quad {c_{20}}=1-r_{23}^2,
  \quad {c_{02}}=1-r_{13}^2,
\]
\[
  \begin{aligned}
  {c_{11}}&=-r_{12}^2+3r_{13}^2+3r_{23}^2-1,\\
  {c_{10}}&=2(r_{12}^2+r_{13}^2-2r_{12}r_{13}r_{23}),\\
  {c_{01}}&=2(r_{12}^2+r_{23}^2-2r_{12}r_{13}r_{23}).
  \end{aligned}
\]
Here $|r_{12}|,|r_{13}|,|r_{23}|\le1$ and ${c_{20}},{c_{02}},{c_{10}},{c_{01}}\ge0$.
The old normals $(e_i\pm e_j)/\sqrt2$ have weights ${w_{ij}^{\mathrm{sum}}}/2\pm\operatorname{adj}({H_\perp})_{ij}$, where
\[
  \begin{aligned}
  {w_{12}^{\mathrm{sum}}}&=1+r_{12}^2-r_{13}^2-r_{23}^2,\\
  {w_{13}^{\mathrm{sum}}}&=2v+1-r_{12}^2+r_{13}^2-r_{23}^2,\\
  {w_{23}^{\mathrm{sum}}}&=2u+1-r_{12}^2-r_{13}^2+r_{23}^2.
  \end{aligned}
\]
Adding their variances and the marked contribution gives
\[
  2V_\perp=uv(u+v)+{c_{20}} u^2+{c_{02}} v^2+uv{c_{11}}+u{c_{10}}+v{c_{01}}+{c_{00}},
\]
where
\[ {c_{00}}=\frac18\sum_{i<j}z_iz_j(z_k-z_l)^2\ge0.
\]
Here $\{k,l\}$ complements $\{i,j\}$.
The formula for $c_{00}$ uses weight $z_iz_j/2$ and variance $(z_k-z_l)^2/8$ at the normal to $b_i^{\perp},b_j^{\perp}$.

For the first-row bound, use coordinate sign changes to make $r_{12},r_{13}\ge0$. These preserve $r_{12}r_{13}r_{23}$ and the variance expansion.
Set $\alpha=\sqrt{1-r_{13}^2}$ and $\beta=\sqrt{1-r_{23}^2}$.
Cauchy--Schwarz gives $2\sqrt{{c_{10}}}\ge2(r_{12}-r_{13}r_{23}+r_{13}\beta)$.
Since $r_{12}-r_{12}^2\ge0$, $\alpha\ge1-r_{13}^2$, and $\beta\ge1-r_{23}^2$, it follows that
\[
  \begin{aligned}
  & {c_{11}}-1-r_{12}-r_{13}+2\sqrt{{c_{20}}{c_{02}}}+2\sqrt{{c_{10}}} \\
  \ge{}& {p_{\mathrm{aux}}} :=r_{13}^2+r_{13}-2r_{13}r_{23}+(1-2r_{13}+2r_{13}^2)r_{23}^2 \\ \ge{}& 0.
  \end{aligned}
\]
The last inequality follows from the identity
\[
  \chi {p_{\mathrm{aux}}}=(\chi r_{23}-r_{13})^2+r_{13}\bigl[(1-r_{13})^2(1-r_{13}^2)+r_{13}^4\bigr],\quad\text{where } \chi=1-2r_{13}+2r_{13}^2>0.
\]
AM--GM gives ${c_{20}} u^2+{c_{02}} v^2\ge2uv\sqrt{{c_{20}}{c_{02}}}$ and $uv^2+u{c_{10}}\ge2uv\sqrt{{c_{10}}}$.
The scalar inequality therefore yields
\[
  2V_\perp\ge uv(1+u+|r_{12}|+|r_{13}|).
\]
Interchanging the first two coordinates proves the second-row bound.
For the third row, additionally use $u^2v+v{c_{01}}\ge2uv\sqrt{{c_{01}}}$.
Its scalar remainder exceeds the first-row remainder by $|r_{12}|-|r_{23}|+2\sqrt{{c_{01}}}\ge0$, because ${c_{01}}\ge2(|r_{12}|-|r_{23}|)^2$.
Thus $2V_\perp\ge uv(1+|r_{13}|+|r_{23}|)$. Scaling restores $h$ and proves the rowwise estimate.

For $H_\perp>0$, spectral Jensen with $(x\log x)''\ge1/\lambda_{\max}(H_\perp)$ and the row-sum bound on $\lambda_{\max}(H_\perp)$ give $E_\perp\ge V_\perp/(2\lambda_{\max}(H_\perp))\ge uvh/4$.
Continuity includes singular matrices.
Finally, when $W_{\mathrm{tet}}=\Id$ and $u=v=\varepsilon\downarrow0$,
\[
  2V_\perp=\varepsilon^2+2\varepsilon^3
  \quad\text{and}\quad
  E_\perp=(1+2\varepsilon)\bigl[(1+\varepsilon)\log(1+\varepsilon)-(2+\varepsilon)\log(1+\varepsilon/2)\bigr]
  =\frac{\varepsilon^2}{4}+O(\varepsilon^3).
\]
This proves sharpness of both constants.
\end{proof}
\subsection{Entropy descent and completion}
\label{sec:descent}

We prove Theorem~\ref{thm:maximal-weight-entropy} by showing that $D_H F$ increases in every largest retained weight and integrating along the capped path $x_i(r)=\min\{x_i,r\}$.
The derivative consists of the transverse entropy of Subsection~\ref{sec:transverse} and a logarithmic correction, a cofactor-weighted mean over the normals not perpendicular to the chosen ray.
The following lemma, proved in Section~\ref{sec:mean-bounds}, bounds this correction through the common frame geometry and the maximality of $R$.

\begin{lemma}
\label{lem:maxray-mean-comparison}
In either nonprincipal configuration, choose a largest retained weight $R=x_i>0$ and write $H=R\Pi_{a_i}+H_->0$.
Thus $x_k\le R$ for every $k$.
Use the transverse quantities $H_\perp,D_\perp,u,v$ and $h=t+s/4$ above, and the mean numerator of Lemma~\ref{lem:marked-rank-one},
\[
 Q=RD_\perp-2\det H_-+\sum_{e\perp a_i}\zeta_eh_e.
\]
Then
\[
  \begin{aligned}
  9Q\ge D_H\qquad &\text{if}\quad  0\le t\le2R/3,\\
  Q\ge uvh(R+h)+\frac{23}{324}R^4\qquad &\text{if}\quad t\ge2R/3.
  \end{aligned}
\]
\end{lemma}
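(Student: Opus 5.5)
The plan is to make $Q$ explicit and then reduce both inequalities to finite polynomial checks. By the mean identity of Lemma~\ref{lem:marked-rank-one} with $d=4$, we have $D_H-Q=3D_\perp h_{\mathrm S}-N_{\perp,0}$. Hence
\[
 Q=RD_\perp-2D_\perp h_{\mathrm S}+N_{\perp,0},\qquad N_{\perp,0}=\sum_{e\perp a_i}\zeta_eh_e .
\]
Here $D_\perp$ and $h_{\mathrm S}$ come from the Schur splitting of $H_-=t\Id+\sum_{k\ne i}x_k\Pi_{a_k}$ along $a_i$. The weights $\zeta_e$ are the $R$-free parts of the cubic weights of Lemma~\ref{lem:facet-cofactors}, namely the terms of $w_e$ not containing $x_i$, for the old normals perpendicular to $a_i$ and for the new normal $m_j$ of the frame of $a_i$.

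By the retained-ray orbits in Lemma~\ref{lem:frame-geometry}, it suffices to treat $a_i=a_1$ and $a_i=a_3$ in type II, and $a_i=a_1$ in type III. In each case I would write $D_\perp$ through Proposition~\ref{prop:transverse-K4-calibration}: $H_\perp=\operatorname{diag}(u,v,0)+W_{\mathrm{tet}}$ with $W_{\mathrm{tet}}$ of diagonal $h=t+s/4$. The quantity $D_\perp h_{\mathrm S}=\det H_-$ is a multilinear polynomial in the $x_k$ with $k\ne i$ and a cubic-plus-linear polynomial in $t$.

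\emph{Case $t\le 2R/3$.} The target $9Q-D_H\ge0$ uses $D_H=D_\perp(R+h_{\mathrm S})$. The difference $9Q-D_H$ is a polynomial of degree four that is affine in $R$ and multilinear in each remaining $x_k$. This holds because each $\Pi_{a_k}$ has rank one, so $\det$, the adjugate cofactors, and the $h_e$ are affine in each $x_k$, and products $\zeta_eh_e$ stay multilinear after collecting terms. By scale invariance I set $R=1$. A multilinear function on the box $x_k\in[0,1]$ attains its minimum at a vertex, so the inequality reduces to $2^8$ vertex patterns, which the frame symmetries fixing $a_i$ cut down substantially. Each pattern leaves a one-variable polynomial in $t\in[0,2/3]$. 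I would verify its nonnegativity by expanding in the Bernstein basis on $[0,2/3]$ or by a Sturm count.

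\emph{Case $t\ge 2R/3$.} I would substitute $t=2R/3+\tau$ with $\tau\ge0$ and $R=1$. I would then show that $Q-uvh(R+h)-\tfrac{23}{324}$ has nonnegative coefficients in $\tau$ at every vertex of the $x$-box. The constant $23/324$ should appear as the value at $\tau=0$ in the worst vertex configuration, presumably all other weights zero. In that configuration $u=v=0$, $s=0$, and $Q$ reduces to a quartic in $t$ evaluated at $t=2/3$. The term $uvh(R+h)$ is not multilinear in the $x_k$, because $h$ appears squared. To handle it, I would first bound it above by an expression affine in each variable, using $u,v\le 2R$ and $h\le t+2R$, or else treat $h$ as an auxiliary variable together with the relation $s=4(h-t)$.

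The main obstacle is this second case. The subtracted term breaks multilinearity, and the bound must be uniform as $t\to\infty$. To control the $\tau$-dependence I would compare leading orders: both sides grow like $R t^3$ and $uvt^2$, and the coefficients of these top-order terms must be matched exactly using the resolution identities of Lemma~\ref{lem:frame-geometry}. If a direct coefficient comparison fails, I would fall back on an SOS certificate in $\tau$ for each vertex class.
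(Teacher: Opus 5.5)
\textbf{The gap is in the range $t\ge 2R/3$, at exactly the step you flag, and your proposed remedy cannot work.} Since $h=t+s/4$ involves only the outside weights, the sole obstruction to multiaffinity in $q_{\mathrm{rem}}=Q-uvh(R+h)$ is the term $-uvs^2/16$.

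Replacing $uvh(R+h)$ by a larger multiaffine expression is hopeless, because the target inequality is an \emph{equality} at an admissible point. Take $R=u=v=1$, $t=2/3$, and exactly two active outside rays differing only in the omitted direction (in type II with $a=a_1=e_1$, these are $a_4$ and $a_7$). A direct computation from $Q=RD_\perp-2\det H_-+\sum_{e\perp a}\zeta_eh_e$ gives $Q=\tfrac12+2t+\tfrac32t^2+\tfrac13t^3$ and $h=7/6$. Hence $Q(2/3)-uvh(R+h)=\tfrac52+\tfrac8{81}-\tfrac{91}{36}=\tfrac{23}{324}$.

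Any strict enlargement of the subtracted term at this point (e.g.\ via $h\le t+2R$) makes the target false. Freezing $h$ as an auxiliary variable does not help either: it leaves a multiaffine function on the slice $\{\sum x_k=s\}$ of the cube, which need not be minimized at binary points.

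\textbf{The missing observation is concavity.} The term $-uvs^2/16$ is \emph{concave} in each outside weight. So $q_{\mathrm{rem}}$ is affine in $u,v$ and separately concave in the outside weights, and its minimum over the box is still attained at binary weights, with no loss.

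The same example shows two further misreadings. First, your guessed extremal configuration is wrong: with all other weights zero, $Q=Rt^3$ and $Q(2R/3)=8R^4/27$. Second, the difficulty is not uniformity as $t\to\infty$. The $t,t^2,t^3$ coefficients of $q_{\mathrm{rem}}$ are $q_1-uv(R+s/2)$, $R(u+v)-2uv+(3R-u-v)s/4$ and $R-(u+v)/3$. All three are nonnegative once $q_1\ge Ruv+uvs/2$ is known (Lemma~\ref{lem:linear-core-frame}). So everything reduces to the endpoint bound $q_{\mathrm{rem}}(2R/3)\ge 23R^4/324$ at binary weights. The paper obtains this from the cube identities: at $R=u=v=1$ they give $q_{\mathrm{rem}}=\tfrac8{81}+\tfrac s9-\tfrac{s^2}{16}+q_{\mathrm{corr}}$ with $q_{\mathrm{corr}}\ge\max\{0,(5s-12)/12\}$.

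\textbf{Your treatment of $0\le t\le 2R/3$ is sound in principle.} For fixed $t$, $9Q-D_H$ is affine in each non-selected weight, so after setting $R=1$ only finitely many quartics in $t$ on $[0,2/3]$ remain to be checked. This is a brute-force version of the paper's argument. The paper makes the same binary reduction, then:
\begin{itemize}
\item computes $q_0,q_1$ uniformly for both types via the sign-cube model (Lemmas~\ref{lem:linear-core-frame}--\ref{lem:cube-constant-bound});
\item proves that the linear coefficient of $9Q-D_H$ is nonnegative;
\item uses a coefficient-sign (Descartes) argument to place the minimum on $[0,2R/3]$ at an endpoint;
\item handles the endpoints by Proposition~\ref{prop:sharp-zero-core-exchange} and Lemma~\ref{lem:common-core-endpoint}.
\end{itemize}
Your route avoids this combinatorics at the cost of a sizable computer verification that you have not carried out. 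The paper's route is checkable by hand and also yields the sharper zero-core bound $81Q\ge10D_H$, which is needed later for the uniform boundary gap.
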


\begin{theorem}
\label{thm:maximal-weight-entropy}
In either nonprincipal type, every positive-definite form satisfies $F\ge0$, with equality exactly when all retained weights vanish.
Consequently $\mathcal I(LV_H)\ge2^{11/4}$ for every such form and every $|\det L|=1$, strictly whenever some retained weight is positive.
On the positive-definite zero-core boundary $t=0$, one has the uniform bounds
\begin{equation}\label{eq:zero-core-uniform-gap}
 F\ge\frac14\log\frac{139968}{130321}>0
 \quad\text{and}\quad
 \mathcal I(LV_H)\ge2^{11/4}
          \left(\frac{139968}{130321}\right)^{1/32}.
\end{equation}
\end{theorem}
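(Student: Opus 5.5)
We will apply Theorem~\ref{thm:portable-global-comparison} with $d=4$, $N=9$, the rays $a_1,\dots,a_9$ of Section~\ref{sec:nonprincipal}, and the cofactor family of Lemma~\ref{lem:facet-cofactors} with marks $\kappa_e=1$ on old normals and $\kappa_{m_j}=2$. The algebraic hypotheses are already available. Balance and $\sum_e w_eh_e=4D_H$ come from Lemma~\ref{lem:facet-cofactors}. The weights are polynomials, homogeneous of degree $3$. In each $x_i$ they have the rank-one form of Lemma~\ref{lem:marked-rank-one}: $w_e$ is affine in $x_i$ when $e\perp a_i$ and independent of $x_i$ otherwise, as one reads off from the explicit cubic formulas. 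Finally, $w_{m_j}(t,0)=0$, so $\kappa_e=1$ whenever $w_e(t,0)>0$. The surface conclusions then follow from Corollary~\ref{lem:surface-calibration}. The whole task is therefore to verify one of the two tests at every largest coordinate $R=x_i$.

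First we compute the transverse budget $b_i^{\mathrm{ent}}$. The angular term needs the values of $\alpha_e\kappa_e$ on the normals not perpendicular to $a_i$. For an old normal $e\not\perp a_i$ we expect $\alpha_e=1/2$ with $\kappa_e=1$. For a new normal $m_j$ in another frame, Lemma~\ref{lem:frame-geometry} gives $\alpha=1/4$ with $\kappa=2$. In both cases $\alpha_e\kappa_e=1/2$, so Lemma~\ref{lem:marked-rank-one} with $b=\log2$ makes the angular term $D_\perp\log2$. The transverse penalty $E_{\mathrm{pen},\perp}$ comes only from the omitted vector of $a_i$'s own frame. Its $\omega$-weight is $uv$ (the companion product) and $h_{m}=h$, so the penalty is $uvh\log2$. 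Hence
\[
 b_i^{\mathrm{ent}}=\frac{E_\perp-uvh\log2}{D_\perp}+\log2=\frac{E_\perp+(D_\perp-uvh)\log2}{D_\perp},
\]
which is exactly the quantity bounded below by Proposition~\ref{prop:transverse-K4-calibration}.

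The argument then splits along the two ranges of Lemma~\ref{lem:maxray-mean-comparison}, and we use the linear test $b_i^{\mathrm{ent}}-\log2+q/2\ge0$ in both.
\begin{enumerate}
\item[(i)] For $0\le t\le2R/3$ we take $\underline b=\tfrac94\log\tfrac43$ and $\underline q=1/9$. Numerically $\tfrac94\log\tfrac43\approx0.6473$ and $\log2-\tfrac1{18}\approx0.6376$, so the test holds strictly.
\item[(ii)] For $t\ge2R/3$ we use $E_\perp\ge uvh/4$ from Proposition~\ref{prop:transverse-marked-variance}. This gives $b_i^{\mathrm{ent}}-\log2\ge -uvh(\log2-\tfrac14)/D_\perp$. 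We combine it with $Q\ge uvh(R+h)+\tfrac{23}{324}R^4$ and $D_H=D_\perp(R+h_{\mathrm S})$. The test then reduces to
\[
 \frac{uvh(R+h)+\tfrac{23}{324}R^4}{2D_\perp(R+h_{\mathrm S})}>\frac{uvh(\log2-\tfrac14)}{D_\perp}.
\]
Since $\log2-\tfrac14<\tfrac12$, this should follow once we check $h\ge h_{\mathrm S}$. That inequality holds because $h_{\mathrm S}$ is a Schur complement bounded by a diagonal entry of the same type. The term $\tfrac{23}{324}R^4$ supplies strictness when $uvh$ is small.
\end{enumerate}
Because both tests are strict, $F=0$ exactly at $x=0$.

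For the zero-core bound \eqref{eq:zero-core-uniform-gap}, case (i) applies at $t=0$ with uniform margin
\[
 \varepsilon=\tfrac94\log\tfrac43-\log2+\tfrac1{18}.
\]
The stated constant $\tfrac14\log(139968/130321)$ is smaller than this, so we expect it to come from a sharper evaluation or from the logarithmic test $\underline b-\log(2-\underline q)$. With $\underline q=1/9$ that test gives margin $\tfrac94\log\tfrac43-\log\tfrac{17}{9}$, and we will match the exact rational form by that bookkeeping. In either case the boundary clause of Theorem~\ref{thm:portable-global-comparison} yields $F\ge\varepsilon$ and the factor $e^{\varepsilon/8}$.

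The main obstacle is the exact identification of $b_i^{\mathrm{ent}}$: checking $\alpha_e\kappa_e=1/2$ uniformly for all nonperpendicular normals in both types, and that only the own-frame new normal carries a transverse mark. The second delicate point is the inequality $h\ge h_{\mathrm S}$ used in case (ii); Lemma~\ref{lem:maxray-mean-comparison} itself we take as given.
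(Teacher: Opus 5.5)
Your verification of the criterion's hypotheses, your identification of $b_i^{\mathrm{ent}}$, and both core ranges match the paper's proof, so $F\ge0$ with equality only at $x=0$, and $\mathcal I(LV_H)\ge2^{11/4}$, are established. The paper uses the logarithmic test for $t\le2R/3$. Your linear test also closes there, with margin $\tfrac94\log\tfrac43-\log2+\tfrac1{18}\approx0.0097>0$. Your case (ii), including $h_{\mathrm S}\le a^{\mathsf T}H_-a=t+s/4=h$, is exactly the paper's computation.

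The gap is the zero-core constant in \eqref{eq:zero-core-uniform-gap}. You claim $\tfrac14\log\tfrac{139968}{130321}$ is smaller than your margin, but the reverse holds. The stated constant is $\approx0.0179$, while your linear margin is $\approx0.0097$. The logarithmic test with $\underline q=1/9$ gives only $\tfrac94\log\tfrac43-\log\tfrac{17}{9}=\tfrac14\log\tfrac{262144}{250563}\approx0.0113$. So no bookkeeping with $\underline q=1/9$ reaches the stated bound, and your argument yields only a weaker uniform gap.

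The missing input is a sharper mean bound on the boundary itself: $81Q\ge10D_H$ at $t=0$ (Proposition~\ref{prop:sharp-zero-core-exchange}). This is a separate case analysis over active companion and outside counts, and it gives $\underline q=10/81$. It must then be fed into the logarithmic test:
$\tfrac94\log\tfrac43-\log\bigl(2-\tfrac{10}{81}\bigr)=\tfrac94\log\tfrac43-\log\tfrac{152}{81}=\tfrac14\log\tfrac{139968}{130321}$.
Even with $10/81$, the linear test gives only $\tfrac94\log\tfrac43-\log2+\tfrac5{81}\approx0.0159$, so the logarithmic form is essential. The boundary clause of Theorem~\ref{thm:portable-global-comparison} then yields $F\ge\tfrac14\log\tfrac{139968}{130321}$ and the factor $(139968/130321)^{1/32}$.
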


\begin{proof}
For a largest weight $R=x_i>0$, set $y=Q/D_H$.
The explicit cofactors satisfy the hypotheses of Theorem~\ref{thm:portable-global-comparison}.
Old and new nonperpendicular normals have $(\alpha_e,\kappa_e)=(1/2,1)$ and $(1/4,2)$, so the angular constant is $b=\log2$.
The transverse mark has cofactor $uv$ and height $h$, so $E_{\mathrm{pen},\perp}=uvh\log2$ and
\[
 b_i^{\mathrm{ent}}=\frac{E_\perp+({D_\perp}-uvh)\log2}{{D_\perp}},
 \quad D_H={D_\perp}(R+{h_{\mathrm S}}),\quad\text{with}\quad 0\le{h_{\mathrm S}}\le h.
\]
If $t\le2R/3$, Proposition~\ref{prop:transverse-K4-calibration} and Lemma~\ref{lem:maxray-mean-comparison} give
\[
 b_i^{\mathrm{ent}}\ge\frac94\log\frac43,
 \qquad y\ge\frac19,
 \qquad
 \frac94\log\frac43-\log\frac{17}{9}
 =\frac14\log\frac{262144}{250563}>0.
\]
This verifies the criterion's strict logarithmic test.
If $t\ge2R/3$, Proposition~\ref{prop:transverse-marked-variance} and the second mean bound verify the strict linear test, since
\[
 \begin{split}
 {D_\perp}\left(b_i^{\mathrm{ent}}-\log2+\frac y2\right)
 &=E_\perp-uvh\log2+\frac{Q}{2(R+{h_{\mathrm S}})}\\
 &\ge uvh\left(\frac34-\log2\right)
       +\frac{23R^4}{648(R+{h_{\mathrm S}})} \\ &>0.
 \end{split}
\]
The criterion proves the comparison and equality assertions.

At $t=0$, Proposition~\ref{prop:sharp-zero-core-exchange} improves the exchange bound to $y\ge10/81$.
The logarithmic test then has the uniform margin
\[
 c_\partial=\frac94\log\frac43-\log\frac{152}{81}
 =\frac14\log\frac{139968}{130321}>0.
\]
The criterion's boundary clause gives the claimed uniform gap.
\end{proof}

\begin{proposition}
\label{prop:direct-metric-coercivity}
In either nonprincipal configuration, let $H=t\Id+K>0$, $K=\sum_{i=1}^9x_i\Pi_{a_i}$, and ${s_{\rm tot}}=\sum_i x_i$.
Set $c_0=(16\cdot35^4)^{-1}$.
For $t>0$ one has
\begin{equation*}
 F(H)\ge
 \begin{cases}
  s_{\rm tot}^2/(256t^2),&s_{\rm tot}\le t/16,\\
  c_0,&s_{\rm tot}>t/16.
 \end{cases}
\end{equation*}
Thus $F(H)\ge c_0\min\{(s_{\rm tot}/t)^2,1\}$. At $t=0$ one also has $F\ge c_0$.
Consequently, if $|\det L|=1$ and
\[
 \eta=\log\frac{\mathcal I(LV_H)}{2^{11/4}}<\frac{c_0}{8},
\]
then $t>0$, ${s_{\rm tot}}/t<1/16$, and
\begin{equation*}
 \frac{s_{\rm tot}} t\le\sqrt{2048\eta}\quad\text{and}\quad
 \|H/t-\Id\|_{\mathrm{op}}\le\sqrt{2048\eta}.
\end{equation*}
\end{proposition}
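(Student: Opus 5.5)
We prove Proposition~\ref{prop:direct-metric-coercivity} by making the descent argument of Theorem~\ref{thm:maximal-weight-entropy} quantitative along the capped path $x_i(r)=\min\{x_i,r\}$. Write $F_{\mathrm{det}}=D_HF$. At every largest active coordinate $R$, Lemma~\ref{lem:marked-rank-one} gives
\[
\partial_{x_i}F_{\mathrm{det}}\ge D_\perp\,m_i ,
\]
where $m_i$ is the margin in the logarithmic or linear test. We bound $m_i$ from below in the two core ranges and integrate.

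In the range $t\le2R/3$, the margin is at least the constant $c_1=\tfrac14\log(262144/250563)$. In the range $t\ge2R/3$, the linear-test computation gives
\[
D_\perp m_i\ge uvh\Bigl(\tfrac34-\log2\Bigr)+\frac{23R^4}{648(R+h_{\mathrm S})}.
\]
Since $h_{\mathrm S}\le h\le t+s_{\rm tot}/4$, this is $\gtrsim R^4/t$ whenever $R\le t$.

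Along the capped path with parameter $r$, at least one coordinate is active and equal to $r$. Hence
\[
\frac{d}{dr}F_{\mathrm{det}}\ge c\min\{D_\perp,\ r^4/(r+t)\}.
\]
We also need a lower bound $D_\perp\gtrsim(t+r)^3$. This follows from $H_\perp\ge t\Id$ in the core case, and from the positive retained support in the zero-core case.

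Next integrate from $0$ to $r_{\max}=\max_ix_i\ge s_{\rm tot}/9$. In the small regime $s_{\rm tot}\le t/16$, every $r\le r_{\max}$ lies in the range $t\ge2R/3$. Using $R+h_{\mathrm S}\le2t$, we obtain
\[
F_{\mathrm{det}}\ge c\,r_{\max}^5/t .
\]
This looks too weak, since the target is quadratic in $s_{\rm tot}/t$. So the first thing to do is recheck the small regime: there the quadratic rate must come from $E_\perp\ge uvh/4$ and the angular budget rather than from the $R^4$ term. A cleaner route for small $s_{\rm tot}$ is a direct second-order bound instead of descent. Write $H=t(\Id+K/t)$. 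Then
\[
E\ge\frac{1}{2\lambda_{\max}D_H}\sum_e w_e\operatorname{var}_e(H),
\]
and at the core the weights are $w_e\approx t^3/3$ on the old normals. By \eqref{eq:D4-root-moments} this gives
\[
E\gtrsim\frac{\|\tf K\|^2}{t^2}.
\]
The penalty $\theta\log2$ is cubic, since $x_{T_j}\le s_{\rm tot}^3$. Moreover, $\|\tf K\|\gtrsim s_{\rm tot}$, because the retained projectors are linearly independent from $\Id$ modulo the trace. Tracking the constants gives $s_{\rm tot}^2/(256t^2)$.

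In the large regime, apply the first regime at the point $s_{\rm tot}=t/16$ along a ray in $x$ and use monotonicity. Equivalently, run the capped path only until the normalized weights reach size $t/144$. Descent monotonicity, together with scale invariance of $F$, carries the value $c_0$ forward. For $t=0$, Theorem~\ref{thm:maximal-weight-entropy} already gives $F\ge\tfrac14\log(139968/130321)>c_0$.

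The consequence is routine. Corollary~\ref{lem:surface-calibration} gives $\eta\ge F/8$, so $\eta<c_0/8$ forces $F<c_0$. This excludes both $t=0$ and $s_{\rm tot}>t/16$. In the remaining case,
\[
\frac{s_{\rm tot}^2}{256t^2}\le8\eta,
\]
which gives the first estimate. The operator bound follows from $\|K\|_{\mathrm{op}}\le s_{\rm tot}$.

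The main obstacle is the quantitative quadratic lower bound $E-\theta\log2\ge s_{\rm tot}^2/(256t^2)$ uniformly for $s_{\rm tot}\le t/16$. This requires explicit control of the $O(s_{\rm tot}^3)$ corrections to the weights and of $\lambda_{\max}$, together with a concrete lower bound for $\|\tf K\|$ over nonnegative retained weights in each type.
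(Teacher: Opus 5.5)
\textbf{The small-$s_{\rm tot}$ regime.} Your direct second-order argument here is the paper's own. It uses spectral Jensen with $\lambda_{\max}(H)\le t+s_{\rm tot}$, the root variance identity \eqref{eq:D4-root-moments}, $w_e\ge t^3/3$ on old normals, $D_H\le(t+s_{\rm tot}/4)^4$, and $\|\tf K\|\ge s_{\rm tot}/6$. The paper obtains the last bound from $\langle\tf K,\tf M_{\rm omit}\rangle=-s_{\rm tot}/4$ and $\|\tf M_{\rm omit}\|=3/2$; equivalently it follows from the frame Gram matrix $\Id_3-\frac14\1\1^{\mathsf T}$, whose least eigenvalue is $1/4$.

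What you call the main obstacle is actually routine. The bound $w_e\ge t^3/3$ holds exactly, term by term from the facet formula, so no control of $O(s_{\rm tot}^3)$ corrections is needed. One caution: to reach the constant $256$ you need the AM--GM bound $x_{T_j}\le s_j^3/27$. Your crude bound $x_{T_j}\le s_{\rm tot}^3$ makes the penalty exceed the entropy lower bound at $s_{\rm tot}=t/16$. The $t=0$ case and the final consequence are handled as in the paper.

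\textbf{The large regime: a genuine gap.} Descent makes $F_{\mathrm{det}}=D_HF$ monotone along the capped path, not $F$. Nothing is known about $F$ along rays $\lambda x$, and scale invariance of $F$ is irrelevant because the capped path is not a dilation. Since $D_H(r)$ grows without bound, $F_{\mathrm{det}}(r)\ge F_{\mathrm{det}}(r_0)$ only gives $F(r)\ge F(r_0)D_H(r_0)/D_H(r)$, and this can tend to zero. So the sentence ``descent monotonicity \dots\ carries the value $c_0$ forward'' is unsupported. Nothing in your argument produces the specific constant $c_0=(16\cdot35^4)^{-1}$.

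The paper closes this in two steps. Normalize $t=1$ and let $r_0$ be the cap level with $\sum_ix_i(r_0)=1/16$.
\begin{itemize}
\item For $r_0\le r\le 3/2$, the trace bound $D_H(r)\le(1+9r/4)^4\le(35/8)^4$ turns $F_{\mathrm{det}}(r)\ge F_{\mathrm{det}}(r_0)\ge 1/65536$ into $F\ge 1/(65536\,(35/8)^4)=c_0$. This is exactly where $c_0$ comes from.
\item For $r\ge3/2$, every active coordinate satisfies $t\le2R/3$. Since $\frac{d}{dr}D_H=\sum_{\text{active }i}D_{\perp,i}$, the strict logarithmic test gives $F_{\mathrm{det}}'\ge c_*D_H'$ with $c_*=\frac14\log\frac{262144}{250563}>c_0$. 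Integrating from $3/2$ yields $F_{\mathrm{det}}(r)\ge c_0D_H(3/2)+c_*\bigl(D_H(r)-D_H(3/2)\bigr)\ge c_0D_H(r)$.
\end{itemize}
You had the margin $c_1=c_*$ at the start, but you never combined it with $D_H'$ or with the intermediate determinant bound. These two ingredients are what the large-regime claim requires.
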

\begin{proof}
For $t>0$, scale to $t=1$ and set $M_{\rm omit}=\sum_j\Pi_{m_j}$.
The frame inner products give
\[
 \|\tf {M_{\rm omit}}\|^2=\frac94,\quad
 \langle\tf K,\tf {M_{\rm omit}}\rangle=-\frac{s_{\rm tot}}4,
 \quad {s_{\rm tot}}\le6\|\tf K\|.
\]
Spectral Jensen, \eqref{eq:D4-root-moments}, $w_e\ge1/3$ on old normals, and $\lambda_{\max}(H)\le1+s_{\rm tot}$ give
\[
 E\ge\frac{\|\tf K\|^2}{3D_H(1+{s_{\rm tot}})}
 \ge\frac{{s_{\rm tot}}^2}{108D_H(1+{s_{\rm tot}})}.
\]
Since $x_{T_j}\le s_j^3/27$ and $h_{m_j}\le1+s_{\rm tot}/4$, subtracting the marked penalty yields
\[
 F\ge\frac{{s_{\rm tot}}^2}{D_H}
 \left\{\frac1{108(1+{s_{\rm tot}})}
       -\frac{{s_{\rm tot}}(1+{s_{\rm tot}}/4)\log2}{27}\right\}.
\]
When ${s_{\rm tot}}\le1/16$, the first term in braces is at least $1/128$ and the second at most $1/384$.
Moreover $D_H\le(1+{s_{\rm tot}}/4)^4<4/3$.
This proves $F\ge{s_{\rm tot}}^2/256$ in the stated range.

For larger $s_{\rm tot}$, follow the capped path $x_i(r)=\min\{x_i,r\}$ of Theorem~\ref{thm:portable-global-comparison}.
Write $D_H(r)=\det H(r)$ and $F_{\mathrm{det}}(r)=D_H(r)F(H(r))$.
The latter is nondecreasing, and the small-core bound on every active coordinate gives, for $r\ge3/2$,
\[
 F_{\mathrm{det}}'(r)\ge c_*{D_H}'(r),\quad\text{where }
 c_*:=\frac14\log\frac{262144}{250563}>c_0,
\]
away from break points. The inequality $\log z\ge1-z^{-1}$ gives $c_*\ge11581/1048576>c_0$.
At and beyond the cap level $r_0$ with $\sum_i x_i(r_0)=1/16$,
\[
 F_{\mathrm{det}}(r)\ge F_{\mathrm{det}}(r_0)
 \ge {D_H}(r_0)/65536\ge1/65536.
\]
For $r\le3/2$, the trace bound gives
\[
 {D_H}(r)\le\left(1+\frac94r\right)^4
 \le(35/8)^4
 \quad\text{and}\quad F(H(r))\ge\frac1{65536(35/8)^4}=c_0.
\]
If the final cap exceeds $3/2$, integration of $F_{\mathrm{det}}'\ge c_*{D_H}'$ from that level gives
\[
 F_{\mathrm{det}}(r)\ge c_0{D_H}(3/2)
             +c_*\bigl({D_H}(r)-{D_H}(3/2)\bigr)\ge c_0{D_H}(r).
\]
This proves the entropy bound for $t>0$; the zero-core gap~\eqref{eq:zero-core-uniform-gap} covers $t=0$.
Finally, $F\le8\eta<c_0$ forces $t>0$ and $s_{\rm tot}/t<1/16$. The local bound and $\|K\|_{\mathrm{op}}\le\tr K=s_{\rm tot}$ give the metric estimates.
\end{proof}

\begin{corollary}\label{cor:entropy-geometric-control}
In the setting of Proposition~\ref{prop:direct-metric-coercivity}, write $L^{-\mathsf T}=Ue^Y$ with $Y=Y^{\mathsf T}$.
For the same logarithmic surface deficit $\eta<c_0/8$, one has $t>0$ and
\begin{equation*}
 \frac{s_{\rm tot}} t+\|Y\|_{\mathrm{op}}
 +d_{24}(LV_H)\le C\sqrt\eta
\end{equation*}
for an absolute constant $C$.
\end{corollary}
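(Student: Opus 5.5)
The plan is to combine three ingredients: the metric control of Proposition~\ref{prop:direct-metric-coercivity}, the affine remainder of Corollary~\ref{cor:exact-affine-coercivity}, and a Lipschitz estimate for how Voronoi cells depend on the data.

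\textbf{Metric part.} By Proposition~\ref{prop:direct-metric-coercivity}, $\eta<c_0/8$ gives $t>0$, $s_{\rm tot}/t\le\sqrt{2048\eta}$ and $\|H/t-\Id\|_{\mathrm{op}}\le\sqrt{2048\eta}$. This already bounds the first summand.

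\textbf{Affine part.} Since $F\ge0$ (Theorem~\ref{thm:maximal-weight-entropy}) and Corollary~\ref{lem:surface-calibration} identifies $\log\operatorname{surf}_H(L)=\eta$, the remainder satisfies $r_{\mathrm{aff}}\le\eta$. Corollary~\ref{cor:exact-affine-coercivity} then gives
\[
\|\log S+\tf(\log\widehat W_\kappa)\|_{\mathrm{op}}\le C(\sqrt\eta+\eta)\le C\sqrt\eta .
\]
Here $S=e^Y$, because polar decomposition is unique: $L^{-\mathsf T}=Ue^Y$ with $\det e^Y=1$. It therefore suffices to show $\|\tf\log\widehat W_\kappa\|_{\mathrm{op}}\le C s_{\rm tot}/t$. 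After scaling to $t=1$, the weights $w_e$ of Lemma~\ref{lem:facet-cofactors} are polynomials with $w_e=1/3+O(s_{\rm tot})$ on old normals and $w_{m_j}=x_{T_j}=O(s_{\rm tot}^3)$ on new ones. Also $h_e=1+O(s_{\rm tot})$ and $H^{1/2}=\Id+O(s_{\rm tot})$. Hence
\[
\widehat W_\kappa=\tfrac13\sum_{\mathcal N_{\rm old}}\Pi_e+O(s_{\rm tot})=\Id+O(s_{\rm tot}),
\]
using the resolution $\sum_{\mathcal N_{\rm old}}\Pi_e=3\Id$ of Lemma~\ref{lem:frame-geometry}. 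This yields $\|\tf\log\widehat W_\kappa\|\le C s_{\rm tot}$, and so $\|Y\|_{\mathrm{op}}\le C\sqrt\eta$.

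\textbf{Hausdorff part.} Write $LV_H=U^{-\mathsf T}e^{-Y}H^{-1/2}\mathcal C_H$ up to the orthogonal identification in Lemma~\ref{lem:facet-cofactors}. By Lemma~\ref{lem:facet-cofactors}, $\mathcal C_H=tC_{24}+\sum_i\frac{x_i}{2}[-a_i,a_i]$. Minkowski addition shows $d_H(\mathcal C_H/t,C_{24})\le s_{\rm tot}/(2t)$.

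Next apply the linear map $M=e^{-Y}(H/t)^{-1/2}$. From the bounds above, $\|M-\Id\|_{\mathrm{op}}\le C\sqrt\eta$, so it moves the bounded set $C_{24}$ by $O(\sqrt\eta)$ in Hausdorff distance. The map $M$ is also $O(1)$-Lipschitz, so it keeps the previous $O(s_{\rm tot}/t)$ error of that order.

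Normalization introduces no further issue. The volume normalization $\vol(LV_H)^{-1/4}$ scales the cell by $(2t^{2}\det(H/t)^{1/2})^{-1/4}$, which lies within a factor $1+O(\sqrt\eta)$ of $(2t^2)^{-1/4}$. Centers are origins by central symmetry. The orthogonal factor $U^{-\mathsf T}$, together with the fixed orthogonal identification, is absorbed into the infimum over $\mathrm{O}(4)$ in $d_{24}$. Together these give $d_{24}(LV_H)\le C\sqrt\eta$.

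\textbf{Expected obstacle.} The main point needing care is the uniform first-order estimate $\widehat W_\kappa=c(\Id+O(s_{\rm tot}/t))$. This must hold uniformly across both nonprincipal types and all supports of the weights. It follows because every weight is an explicit polynomial in $(t,x)$ with bounded coefficients, and $s_{\rm tot}/t<1/16$ keeps $H$ uniformly close to $t\Id$.
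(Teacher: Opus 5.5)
Your proposal is correct and follows the paper's proof. It takes the metric bound from Proposition~\ref{prop:direct-metric-coercivity}, feeds $r_{\mathrm{aff}}\le\eta$ (from $F\ge0$) into Corollary~\ref{cor:exact-affine-coercivity} together with $\|\tf\log\widehat W_\kappa\|_{\mathrm{op}}\le C\,s_{\rm tot}/t$, and then compares support functions through the Minkowski formula for $\widehat{LV_H}$.

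The only difference is how the $\widehat W_\kappa$ bound is obtained. The paper uses the Loewner sandwich $t^{7/2}(1+q)^{-1/2}\Id\le\widehat W_\kappa\le t^{7/2}(1+q/4)^4\Id$ with $q=s_{\rm tot}/t$, coming from cofactor balance, $w_e\ge t^3/3$ and AM--GM, which gives the explicit constant $\tfrac32$. You instead expand the polynomial weights perturbatively; this also works, since here $q\le\sqrt{2048\eta}$ is very small.
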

\begin{proof}
Proposition~\ref{prop:direct-metric-coercivity} gives $t>0$ and $q=s_{\rm tot}/t<1/16$, with $q\le\sqrt{2048\eta}$.
Since $t\Id\le H\le t(1+q)\Id$ and the attenuation factors are at most one, cofactor balance, $w_e\ge t^3/3$ on old normals, and $\sum_{\mathcal N_{\rm old}}\Pi_e=3\Id$ give
\[
 \frac{t^{5/2}}{\sqrt{1+q}}\Id\le W_\kappa
 \le t^{-1/2}\operatorname{adj}H.
\]
For $\widehat W_\kappa=H^{1/2}W_\kappa H^{1/2}$, congruence and AM--GM yield
\begin{align*}
 \frac{t^{7/2}}{\sqrt{1+q}}\Id\le\widehat W_\kappa
 &\le t^{7/2}(1+q/4)^4\Id,\\
 \|\tf\log\widehat W_\kappa\|_{\mathrm{op}}
 &\le4\log(1+q/4)+\frac12\log(1+q)\le\frac32q.
\end{align*}
Since $F\ge0$, Corollary~\ref{cor:exact-affine-coercivity} applies with remainder at most $\eta$ and gives $\|Y\|_{\mathrm{op}}\le C\sqrt\eta+3q/2\le C\sqrt\eta$.
Set $H_0=H/t$ and $D_{H_0}=\det H_0$.
The Minkowski formula gives
\[
 \widehat{LV_H}
 =U\,2^{-1/4}D_{H_0}^{-1/8}e^{-Y}H_0^{-1/2}
 \left(C_{24}+\frac12\sum_i\frac{x_i}{t}[-a_i,a_i]\right).
\]
The body in parentheses is within $q/2$ of $C_{24}$, and
\[
 \|H_0^{-1/2}-\Id\|_{\mathrm{op}}\le q/2,\qquad
 |D_{H_0}^{-1/8}-1|\le q/8,\qquad
 \|e^{-Y}-\Id\|_{\mathrm{op}}\le e^{\|Y\|_{\mathrm{op}}}-1.
\]
These factors are uniformly bounded, so comparison of support functions gives $d_{24}(LV_H)\le C(q+\|Y\|_{\mathrm{op}})$, completing the proof.
\end{proof}

\begin{proof}[Proof of Theorem~\ref{thm:main}]
By Theorem~\ref{thm:classification} and dilation invariance, it suffices to consider volume-preserving affine images in the three closed primitive cones.
Theorem~\ref{thm:zonotope-gap} is strict in the principal cone; Theorem~\ref{thm:maximal-weight-entropy} and Corollary~\ref{lem:entropy-bridge} are strict in the nonprincipal cones when $K\ne0$.
At $K=0$, Lemma~\ref{lem:reference-cell} and Corollary~\ref{lem:surface-calibration} give equality exactly for orthogonal images of the regular cell.
Restoring translations and dilations gives its similarities.
\end{proof}
\begin{proof}[Proof of Corollary~\ref{cor:fixed-inradius}]
Pass to the closure and choose $B(x_0,r)\subset P$.
The facet support distances satisfy $h_F\ge r$, so the pyramid formula gives $4\vol_4(P)\ge r\operatorname{area}(P)$, where $\operatorname{area}(P)=\mathcal H^3(\partial P)$.
Together with Theorem~\ref{thm:main},
\[
 \operatorname{area}(P)^4\ge2^{11}\vol_4(P)^3
 \ge32r^3\operatorname{area}(P)^3,
\]
proving the bound after division.
Equality forces a regular $24$-cell and $h_F=r$ for every facet, hence inradius $r$.
Conversely, Lemma~\ref{lem:reference-cell} and scaling give its area $32r^3$.
\end{proof}

\begin{remark}
\label{rem:24-cell-packing}
For a convex lattice fundamental domain containing a ball of radius $r$, Corollary~\ref{cor:fixed-inradius} and the pyramid formula give $\vol_4(P)\ge8r^4$, with equality only for the regular $24$-cell of inradius $r$.
For lattice Voronoi cells this recovers the optimal lattice packing density $\pi^2/16$, equivalently the value $\gamma_4=\sqrt2$ of the Hermite constant~\cite{ConwaySloane1999}, together with the uniqueness of $D_4$ up to similarity.
The $24$-cell conjecture~\cite{Musin2018} asks for the same cellwise volume bound for Voronoi cells of arbitrary packings of balls of radius $r$.
Since such cells need not be parallelohedra, Theorem~\ref{thm:main} does not settle that conjecture.
\end{remark}

\section{A unified frame comparison}
\label{sec:mean-bounds}

The two mean bounds in Lemma~\ref{lem:maxray-mean-comparison} come from a single geometric model: the six outside rays occupy two parity classes of a sign cube.
Rooted basis exchanges become its edges, diagonals, triangles, and stars.
This gives a common proof for both nonprincipal configurations, including the zero-core comparison and a positive large-core remainder.
The obstacle is to control negative exchange terms using the common frame geometry and the largest-weight constraint. Determinant expansion and interpolation organize the comparison.
Throughout, $R$ is a largest retained weight, so the other eight retained weights lie in $[0,R]$.
Its ray is $a$, its two companion weights are $u,v$, the outside total is $s$, and $h=t+s/4$.

The argument has three stages.
First, Lemmas~\ref{lem:linear-core-frame}--\ref{lem:cube-constant-bound} express the coefficients of $Q$ through cube incidences and bound their negative contributions.
These bounds give the large-core estimate in Lemma~\ref{lem:maxray-mean-packets} and the zero-core estimate in Proposition~\ref{prop:sharp-zero-core-exchange}.
Finally, the endpoint comparison at $t=2R/3$ and Lemma~\ref{lem:maxray-low-core} cover the intervening core range, completing Lemma~\ref{lem:maxray-mean-comparison}.

Recall the exchange numerator from the rank-one identity:
\[
 Q=RD_\perp-2\det H_-+\sum_{e\perp a}\zeta_eh_e.
\]
For a retained index set $J$, write
\[
  g_J=\det\bigl((a_i\cdot a_k)_{i,k\in J}\bigr),\quad\text{with}\quad g_\varnothing=1.
\]
Cauchy--Binet gives the coefficient formula
\begin{equation*}
 \det\left(t{\Id}+\sum_i x_i{\Pi_{a_i}}\right)
 =\sum_{J\subset\{1,\ldots,9\},\; |J|\le4}
       t^{4-|J|}g_Jx_J.
\end{equation*}
For ${D_\perp}=\det({H_-|_{a^\perp}})$, the analogous formula uses subsets avoiding the selected ray and the projected Gram entries $a_i\cdot a_k-(a_i\cdot a)(a_k\cdot a)$, with $t^{3-|J|}$ in place of $t^{4-|J|}$.
The remaining term in ${Q}$ is obtained by setting the selected weight to zero in the facet formula for $w_e$ and multiplying by $h_e$, for $e\perp a$.
Rooted basis exchanges determine the constant coefficient.
Weighted cube marginals control the linear coefficient, while elementary symmetric functions of the frame operator supply the dense-array bounds.

These formulas also show that ${Q}$ and $9{Q}-{D_H}$ are affine separately in every retained weight.
For the facet terms, a weight occurring in $w_e$ belongs to a ray perpendicular to $e$ and therefore does not occur in $h_e$.
We repeatedly use the following interpolation identity.
If $f$ is affine separately in $m$ variables and $0\le\xi_i\le1$, then
\begin{equation*}
 f(\xi)=\sum_{\varepsilon\in\{0,1\}^m}f(\varepsilon)
          \prod_{i:\varepsilon_i=1}\xi_i
          \prod_{i:\varepsilon_i=0}(1-\xi_i).
\end{equation*}
In particular, a lower bound at the vertices of the cube holds throughout the cube.
All expressions being compared are homogeneous of degree four in $(t,x)$.
When $R>0$, division by $R$ therefore reduces the cube $[0,R]^8$ to $[0,1]^8$.
These polynomial inequalities hold also where $H$ is singular.
Positive definiteness is used only in Subsection~\ref{sec:descent}.

Write ${Q}=q_0+tq_1+t^2q_2+t^3q_3$.
The fourth-degree core terms cancel because $-2{\det H_-}$ contributes $-2t^4$, and the six old normals perpendicular to $a$ contribute $2t^4$.
The common frame incidences give
\begin{equation}\label{eq:common-high-core-coefficients}
 q_3=R-\frac{u+v}{3}\quad\text{and}\quad
 q_2=R(u+v)-uv+\frac{3R-u-v}{4}s.
\end{equation}

\subsection{Frame identities and rooted exchanges}

The weight of a collection of vertex sets means the sum of the products of their vertex weights.

\begin{lemma}
\label{lem:linear-core-frame}
Use the selected complete frame as coordinates, with $a=e_0$, retained companions $e_1,e_2$ of weights $u,v$, and omitted ray $e_3$.
The outside rays, oriented toward $a$, are $b_\sigma=(1,\sigma_1,\sigma_2,\sigma_3)/2$. Their two frames are the two parity classes of the sign cube, with one vertex of weight zero in each class.
Write ${N_{{\rm edge},i}}$ for the weight of cube edges in direction $i$, ${N_{{\rm diag},i}}$ for the weight of same-parity pairs whose unchanged coordinate is $i$, and ${N_{\rm par}}$ for the weight of parity triples.
Then
\begin{align*}
 q_1={}&Ruv+\frac{R(u+v)-uv}{2}s\\
 &+\frac12\{R{N_{{\rm diag},3}}-{N_{\rm par}}+(R-u){N_{{\rm diag},1}}+(R-v){N_{{\rm diag},2}}\}\\
 &+\frac14\{(2R-v){N_{{\rm edge},1}}+(2R-u){N_{{\rm edge},2}}+(2R-u-v){N_{{\rm edge},3}}\}.
\end{align*}
In particular, if all retained weights are at most $R$, then $q_1\ge Ruv+uvs/2$.
\end{lemma}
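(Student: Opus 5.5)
We prove the identity by extracting the coefficient of $t^1$ from each of the three pieces of $Q=RD_\perp-2\det H_-+\sum_{e\perp a}\zeta_eh_e$, and then deduce the inequality by termwise nonnegativity.

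\emph{Frame coordinates.} In the coordinates of the selected complete frame, with $a=e_0$, the six outside rays have first coordinate $\pm1/2$. After orienting them toward $a$ they become $b_\sigma=(1,\sigma)/2$ with $\sigma\in\{\pm1\}^3$. The frame inner products of Lemma~\ref{lem:frame-geometry} are $\pm1/2$ or $0$. These force two outside frames that are orthonormal within themselves, and two such vectors are orthogonal exactly when $\sigma,\sigma'$ differ in two coordinates. So the two outside frames are the two parity classes of the cube, each of size four. The omitted vector of each frame has weight zero, which gives one zero vertex per class.

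\emph{Coefficient of $t$ in each piece.}
\begin{itemize}
\item Cauchy--Binet gives the coefficient of $t$ in $\det H_-$ as $\sum_{|J|=3}g_Jx_J$, summed over triples avoiding $a$.
\item The coefficient of $t$ in $D_\perp$ is the analogous sum over pairs, using projected Gram determinants on $a^\perp$.
\item For the facet term, set $x_a=0$ in $w_e$ and multiply by $h_e=t\,e^{\mathsf T}e+e^{\mathsf T}Ke$. The $t$-coefficient collects the pair terms $\tfrac t2(1+\1_{\perp})x_ix_k$ times $e^{\mathsf T}Ke$, and the cubic terms of $w_e$ times $1$.
\end{itemize}
The projections of $b_\sigma$ to $a^\perp$ are $\sigma/2$, so all Gram determinants are explicit. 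They depend only on the Hamming distance of the sign vectors, and on which coordinate is unchanged or changed. The retained companions $e_1,e_2$ meet $b_\sigma$ with squared inner product $1/4$.

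\emph{Sorting by weight type.} Group the resulting monomials by type:
\begin{itemize}
\item the triple $R\,u\,v$ and pairs $u$ or $v$ times outside weights, which produce $Ruv+\tfrac12(R(u+v)-uv)s$;
\item pairs of outside weights, classified as cube edges in direction $i$ (different parity) or same-parity diagonals with unchanged coordinate $i$;
\item outside triples, each either inside one parity class (a parity triple) or spanning both classes.
\end{itemize}
The main obstacle is the cancellation of the mixed-parity triples and of the edge--diagonal products against the old-normal cubic terms of $w_e$. I would verify this first by a symmetric count. Each old normal perpendicular to $a$ is $(e_i\pm e_j)/\sqrt2$ with $i,j\ge1$, and its perpendicular outside rays are determined by $\sigma_i=\mp\sigma_j$. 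The independent triples among these reduce to the $A_3$ incidences of Lemma~\ref{lem:frame-geometry}.

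The proof is simpler if this step is organized by the interpolation identity. $Q$ is affine in each weight, so it suffices to check the claimed coefficient at $0/1$ vertices of the outside weights, with $R,u,v$ kept symbolic. The cube symmetries of Lemma~\ref{lem:frame-geometry} reduce this to a handful of configurations: the empty set, a single vertex, an edge, a diagonal in each direction class, a parity triple, and a mixed triple. Higher vertex sets are then checked by affineness, so only products of at most three outside weights need be compared.

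\emph{Inequality.} Assume every retained weight is at most $R$.
\begin{itemize}
\item The edge coefficients $2R-v$, $2R-u$ and $2R-u-v$ are nonnegative.
\item The diagonal coefficients $R-u$ and $R-v$ are nonnegative.
\item A parity class has four vertices, one of weight zero. A parity triple is therefore supported on the three nonzero vertices of its class, and those three vertices also span the three diagonals of the class, one for each unchanged coordinate $i$.
\end{itemize}
Hence $N_{\rm par}\le R\cdot N_{{\rm diag},i}$ restricted to that class for any $i$, in particular for $i=3$. This absorbs $-N_{\rm par}$ into $RN_{{\rm diag},3}$. Finally $R(u+v)-uv\ge uv$, which yields $q_1\ge Ruv+uvs/2$.
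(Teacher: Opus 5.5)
\textbf{Verdict: genuine gap.} Your outline follows the paper's route: take the $t$-coefficient of $RD_\perp$, $-2\det H_-$ and $\sum_{e\perp a}\zeta_eh_e$, then sort the monomials by cube geometry. Your deduction of $q_1\ge Ruv+uvs/2$ from the identity is correct. That includes using the zero vertex in each parity class to get $N_{\rm par}\le RN_{{\rm diag},3}$.

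\textbf{What is missing.} The identity itself is never proved. You call the cancellation ``the main obstacle'' and defer it. Even the first group, $Ruv+\frac12(R(u+v)-uv)s$, is asserted rather than derived. The substance of the lemma is the coefficient table, which has two parts.
\begin{itemize}
\item The coefficient of $Rx_bx_c$ is $g_{\{a,b,c\}}$. It equals $1$ for the two companions, $\frac12$ for a companion--outside pair, a same-parity pair or a cube edge, and $0$ for an antipodal pair.
\item An unrooted triple $J$ has coefficient $c_J=-2g_J+c_J^{\rm fac}$. Here $c_J^{\rm fac}$ gives $\frac12$ for each old normal $n\perp a$ perpendicular to an independent $J$. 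It also gives $\frac12(1+\1_{b\perp c})(n\cdot d)^2$ for each pair $\{b,c\}\subset J$, each common old normal $n\perp a,b,c$, with $d$ the remaining ray.
\end{itemize}
The values you need are:
\begin{itemize}
\item $c_J=-\frac12$ for $\{e_1,e_2,b\}$ (there $2g_J=1$ and the pairs $\{e_i,b\}$ contribute $\frac14$ each), and also for parity triples;
\item $c_J=-\frac12\1_{i=j}$ for $e_i$ with a diagonal of unchanged coordinate $j$;
\item $c_J=-\frac14\1_{i\ne j}$ for $e_i$ with an edge in direction $j$;
\item $c_J=0$ for $e_i$ with an antipodal pair, and for every mixed outside triple.
\end{itemize}
What makes these computable is the location of the common old normals (perpendicular to $a$) of two outside rays. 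They sit on the changed coordinates if the rays differ in two signs, on the unchanged coordinates if they differ in one, and on all three coordinate pairs if they are antipodal.

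\textbf{Why interpolation does not help.} Since $q_1$ is a multiaffine cubic, evaluating it at indicator vectors of at most three rays is the same as reading off these coefficients. Every Gram determinant and facet count above must still be computed. Your list of representative configurations is also incomplete:
\begin{itemize}
\item it omits the antipodal pair;
\item it does not separate edges in direction $3$, where both companions contribute $-\frac14$, from edges in directions $1,2$, where only one does;
\item it treats ``a mixed triple'' as one case, although mixed triples with and without an antipodal pair are not related by any symmetry fixing $a$. Their facet contributions are $\frac12+\frac12$ and $\frac12+\frac14+\frac14$ respectively, each cancelling $2g_J=1$.
\end{itemize}
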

\begin{proof}
The parity description follows because outside rays are orthogonal exactly when two signs differ.
The coefficient of $Rx_bx_c$ is the Gram determinant of $\{a,b,c\}$: one for two companions, $1/2$ for a companion--outside pair, a same-parity pair, or a cube edge, and zero for an antipodal pair.
An unrooted triple $J$ has coefficient $c_J=-2g_J+{c_J^{\rm fac}}$, where
\[
 {c_J^{\rm fac}}=\frac12\#\{n\in{\mathcal N_{\rm old}}:n\perp a,J,\ J\text{ independent}\}
 +\frac12\sum_{\substack{\{b,c\}\subset J\\n\in{\mathcal N_{\rm old}},\ n\perp a,b,c}}
       (1+\1_{b\perp c})(n\cdot d)^2,
\]
where $\{d\}=J\setminus\{b,c\}$.
The relevant old normals are $(e_i\pm e_j)/\sqrt2$, $1\le i<j\le3$.
For a companion--outside pair the common normal uses the other two coordinates; for two outside rays it uses their changed coordinates when two signs differ, their unchanged coordinates when one differs, and all three coordinate pairs when they are antipodal.
Two companions have none.
The remaining squared inner products, zero or $1/2$, give
\[
\begin{array}{@{}c|c@{}}
 J&c_J\\\hline
 \{e_1,e_2,b\},\ b\text{ outside}&-1/2\\
 \text{one outside parity triple}&-1/2\\
 \{e_i,b,c\},\ \{b,c\}\text{ counted by }{N_{{\rm diag},j}}&-\1_{i=j}/2\\
 \{e_i,b,c\},\ \{b,c\}\text{ counted by }{N_{{\rm edge},j}}&-\1_{i\ne j}/4\\
 \{e_i,b,c\},\ b,c\text{ antipodal}&0\\
 \text{a mixed outside triple}&0.
\end{array}
\]
For the last row, $g_J=1/2$ and ${c_J^{\rm fac}}=1$. The latter has nonzero contributions $1/2,1/2$ with an antipodal pair, and $1/2,1/4,1/4$ otherwise.
Summation gives the formula for $q_1$.
Each parity triple contains a unique ${N_{{\rm diag},3}}$ pair, so ${N_{\rm par}}\le R{N_{{\rm diag},3}}$. The other brackets are nonnegative, and $R(u+v)\ge2uv$ gives the bound.
\end{proof}

\begin{lemma}\label{lem:rooted-exchange-polynomial}
Fix a retained ray $a$ of weight $R$.
For a retained basis $B$ omitting $a$, let
\[
 f_B=\#\{b\in B:(B\setminus\{b\})\cup\{a\}\text{ is a basis}\}.
\]
The constant coefficient of ${Q}$ is
\begin{equation}\label{eq:weighted-rooted-exchange}
 4q_0
 =R\sum_{\substack{|J|=3\\J\cup\{a\}\text{ is a basis}}}x_J
  -\sum_{\substack{|B|=4,\ a\notin B\\B\text{ is a basis}}}(f_B-2)x_B.
\end{equation}
In particular, this coefficient depends only on the rooted linear matroid.
\end{lemma}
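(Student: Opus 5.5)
Setting $t=0$ in the three constituents of $Q$ gives the constant coefficient
\[
 q_0=R\,D_\perp^{(0)}-2\det H_-^{(0)}+\sum_{e\perp a}\zeta_e^{(0)}h_e,
\]
and I will expand each term by Cauchy--Binet. With $t=0$, $\det H_-=\sum_{|B|=4,\,a\notin B}g_Bx_B$. By Lemma~\ref{lem:frame-geometry}, every independent retained four-tuple has $g_B=1/4$, so $-2\det H_-$ contributes $-\tfrac12\sum_B x_B$. For $D_\perp$ I use the projected Gram matrix of triples $J\not\ni a$. Its determinant is $g_{J\cup\{a\}}$, which equals $1/4$ exactly when $J\cup\{a\}$ is a basis and $0$ otherwise. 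Hence $R D_\perp$ contributes $\tfrac R4\sum x_J$ over triples completing $a$ to a basis. This is the first term of \eqref{eq:weighted-rooted-exchange} after multiplying by four.

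It remains to show that the facet term contributes $\tfrac14\sum_B(4-f_B)x_B$ at $t=0$. Once that holds, $4q_0=R\sum x_J-\sum_B(2-4+f_B)x_B$, which is the claim. With $t=0$ and the selected weight set to zero, Lemma~\ref{lem:facet-cofactors} gives
\[
 \zeta_e=\tfrac12\sum_{J\subset I_e\setminus\{a\},\,|J|=3,\,J\text{ indep}}x_J
\]
on old normals $e\perp a$. Since $e\perp J$ and $e\perp a$, each such $J$ together with $a$ spans $e^\perp$, so $J\cup\{a\}$ is dependent and $J$ is not of the first type. New normals $m_j\perp a$ have $\zeta_{m_j}=x_{T_j}$ only when $a\notin T_j$. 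Their heights $h_e=\sum_{i\notin J}x_i(e\cdot a_i)^2$ are linear at $t=0$, so the facet term is a sum of monomials $x_Jx_b$. Only monomials with $J\cup\{b\}$ a basis survive, because $(e\cdot b)^2=0$ when $b\in\operatorname{span}J=e^\perp$. Regrouping by $B=J\cup\{b\}$, the coefficient of $x_B$ is $\sum_{b\in B}c_{B\setminus b}(e_{B\setminus b}\cdot b)^2$, where $c=1/2$ on old normals and $1$ on the frame (new) normal. Each deleted element $b$ with $(B\setminus b)\cup\{a\}$ dependent contributes. The identity to prove is that each such term equals $1/4$, i.e.\ $c_J(n_J\cdot b)^2=g_B/g_J=\tfrac14/g_J$. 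This is the standard height formula $g_B=g_J\,(n_J\cdot b)^2$, combined with $c_J=1/(4g_J)$ read off from the Gram determinants in Lemma~\ref{lem:frame-geometry}: $g_J=1/2$ for old-normal triples and $1$ for same-frame triples. Summing over the $4-f_B$ non-exchangeable elements gives $(4-f_B)/4$.

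I expect the main obstacle to be the bookkeeping in this last step. One must check that every normal of a perpendicular independent triple $J\not\ni a$ with $a\in\operatorname{span}J$ appears exactly once in the family with weight $c_J=1/(4g_J)$. This rests on the classification of independent full-frame triples in Lemma~\ref{lem:frame-geometry}, with care for omitted frame vectors (zero weight) and for repeated normal lines. It also confirms the weight-$1/2$ normalization in $w_e$. Since every coefficient is expressed through bases and exchanges, the dependence on the rooted matroid alone follows.
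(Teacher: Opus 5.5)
Your proposal is correct and follows the paper's own proof. Cauchy--Binet with squared basis determinants $1/4$ turns $RD_\perp$ and $-2\det H_-$ into the first sum and $-\tfrac12\sum_B x_B$. The facet term, regrouped by $B=J\cup\{b\}$, then gives $x_B/4$ for each of the $4-f_B$ non-exchangeable deletions by the Gram-determinant-times-squared-height formula.

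Two slips in your justification need fixing.
\begin{itemize}
\item The relation that makes each term equal $1/4$ is $c_J=g_J$, so that $c_J(n_J\cdot b)^2=g_J\cdot g_B/g_J=g_B=1/4$. It is not $c_J=1/(4g_J)$, which even contradicts your own values $c=1$, $g_J=1$ for the frame normal.
\item New normals never contribute. A normal $m_j\perp a$ forces $a\in T_j$, hence $\zeta_{m_j}=0$. So every relevant $J$ (with $a\in\operatorname{span}J$ and $a\notin J$) is not a same-frame triple, and it has an old normal with $c_J=g_J=1/2$. In that case your numbers are right.
\end{itemize}
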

\begin{proof}
At zero core, squared basis determinants $1/4$ give the first sum as $4R{D_\perp}$ and ${\det H_-}=\frac14\sum_Bx_B$.
Deleting $b\in B$ gives a cofactor normal perpendicular to $a$ exactly when $(B\setminus\{b\})\cup\{a\}$ is dependent.
Each of these $4-f_B$ deletions contributes $x_B/4$ to $\sum_{e\perp a}\zeta_eh_e$, by the Gram determinant times squared-height formula.
Substitution in $q_0=R{D_\perp}-2{\det H_-}+\sum_{e\perp a}\zeta_eh_e$ proves the identity.
\end{proof}

For direct exchange calculations, write $a=\sum_{b\in B}\beta_b b$ in an unrooted basis $B$.
Cramer's rule gives
\[
 f_B=\#\{b\in B:\beta_b\ne0\}.
\]
Thus the exchange coefficient $-(f_B-2)$ in~\eqref{eq:weighted-rooted-exchange} can be read from the nonzero coordinates of the selected ray in that basis.

\subsection{One cube comparison for both configurations}

\begin{lemma}
\label{lem:full-companion-cube}\label{lem:cube-marginals}
In the coordinates of Lemma~\ref{lem:linear-core-frame}, set $R=u=v=1$.
Write $N_{\rm edge}=N_{{\rm edge},1}+N_{{\rm edge},2}$, $N_{\rm diag}=N_{{\rm diag},3}$, and $N_{\rm tri}$ for weights of edges, diagonals, and triples in the horizontal faces $\sigma_3=\pm1$.
Let $N_{\rm star}$ count weighted cube stars (a vertex and its three neighbors), and retain the parity-triple weight $N_{\rm par}$.
For arbitrary nonnegative outside weights,
\begin{align*}
 4{q_0}&=s+{N_{\rm edge}}+2{N_{\rm diag}}+{N_{\rm tri}}-{N_{\rm par}}-2{N_{\rm star}},
 \\
 {q_1}&=1+\frac{s}{2}+\frac{{N_{\rm edge}}}{4}+\frac{{N_{\rm diag}}}{2}-\frac{{N_{\rm par}}}{2}.
\end{align*}
If the outside weights lie in $[0,1]$, then
\begin{equation}
 N_{\rm edge}+2N_{\rm diag}-2N_{\rm par}\ge2(s-4)_+
 \quad\text{and}\quad
 4q_0\ge s+N_{\rm edge}+N_{\rm diag}-N_{\rm star}.
 \label{eq:cube-marginal-pair-bound}
\end{equation}
For binary outside weights, $s$ counts occupied vertices and
\begin{equation*}
 5{N_{\rm edge}}+10{N_{\rm diag}}+3{N_{\rm tri}}-7{N_{\rm par}}-6{N_{\rm star}}\ge5s-12.
\end{equation*}
Writing ${n_{\rm pair}}(s)=\binom{\lfloor s/2\rfloor}{2} +\binom{\lceil s/2\rceil}{2}$, one also has
\begin{equation*}
 4{q_0}\ge s+{n_{\rm pair}}(s)-\left\lfloor\frac{(s-2)_+}{2}\right\rfloor.
\end{equation*}
In particular, $4{q_0}\ge5,8,10$ for $s=4,5,6$, respectively, and ${q_1}\ge5$ for $s=6$.
\end{lemma}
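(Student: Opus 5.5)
The plan is to derive both coefficient formulas by specializing Lemma~\ref{lem:rooted-exchange-polynomial} and Lemma~\ref{lem:linear-core-frame} to $R=u=v=1$. After that, each inequality reduces, by the interpolation identity, to a check at the vertices of the weight cube or to a direct counting argument.

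\textbf{The formula for $q_1$.} Substituting $R=u=v=1$ into Lemma~\ref{lem:linear-core-frame} gives
\[
 q_1=1+\tfrac s2+\tfrac12(N_{{\rm diag},3}-N_{\rm par})+\tfrac14(N_{{\rm edge},1}+N_{{\rm edge},2}).
\]
The terms $(R-u)N_{{\rm diag},1}$ and $(R-v)N_{{\rm diag},2}$ vanish, as does $(2R-u-v)N_{{\rm edge},3}$. This is exactly the stated expression.

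\textbf{The formula for $4q_0$.} I would apply~\eqref{eq:weighted-rooted-exchange} and classify retained bases and triples by how many companions $e_1,e_2$ they contain. For each class, the value $f_B$ is read off from the nonzero coordinates of $a=e_0$ in $B$, using $b_\sigma=(1,\sigma)/2$.

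Triples $J$ with $J\cup\{a\}$ a basis produce the terms $s$ (from $\{e_1,e_2,b\}$), $N_{\rm edge}$ and $N_{\rm diag}$ contributions (from $\{e_i,b,c\}$), and outside-triple terms.

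The unrooted bases then supply the signed corrections $-(f_B-2)$:
\begin{itemize}
\item bases with two companions have $f_B=2$ and contribute nothing;
\item bases with one companion contribute to the diagonal count;
\item bases with four outside vertices split by their cube shape. A star has $f_B=4$, giving $-2$. Horizontal-face configurations give the $N_{\rm tri}$ term, parity-mixed ones give $-N_{\rm par}$, and the rest have $f_B=2$.
\end{itemize}
I expect the bookkeeping here to be the main risk. The step I would check most carefully is which outside four-sets are independent, since the antipodal and coplanar configurations of $\{\pm1\}^3$ are dependent.

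\textbf{Inequality~\eqref{eq:cube-marginal-pair-bound}.} Both sides of each inequality are affine in each outside weight: no product repeats a vertex, and $(s-4)_+$ can be replaced by the affine bound $s-4$ together with $0$. So it suffices to check binary weights, that is, occupied subsets of the eight vertices with one zero vertex forced in each parity class. The number of such subsets is small and they can be enumerated up to cube symmetry.

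For the first inequality, each parity triple contains three same-parity pairs and each diagonal lies in at most two occupied parity triples. Combining this with a count of edges should give $N_{\rm edge}+2N_{\rm diag}-2N_{\rm par}\ge2(s-4)$.

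For the second inequality, it suffices to prove
\[
 N_{\rm diag}+N_{\rm tri}-N_{\rm par}-N_{\rm star}\ge0 .
\]
I would try to prove this at binary vertices by injecting each parity triple and each star into horizontal-face triples or diagonals.

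\textbf{The binary bounds.} The inequality with coefficients $5,10,3,-7,-6$ and the $n_{\rm pair}$ bound are statements about binary weights only, so I would verify them by enumeration over $s=0,\dots,6$ and cube-symmetry orbits. The key input for the $n_{\rm pair}$ bound is the following. Occupied vertices split between the two parity classes, and within a class every pair is a diagonal. Hence $N_{\rm diag}$ over all three coordinate directions is at least $n_{\rm pair}(s)$ by convexity of $\binom{\cdot}{2}$. The $-\lfloor (s-2)_+/2\rfloor$ term absorbs the excess of stars and parity triples over edges and face triples.

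The particular values $4q_0\ge5,8,10$ for $s=4,5,6$ follow by substituting into the $n_{\rm pair}$ bound. The value $q_1\ge5$ at $s=6$ follows from the $q_1$ formula: six occupied vertices force three in each class, so $N_{\rm par}\le2$, and counting gives $N_{\rm edge}\ge6$ and $N_{\rm diag}\ge2$.
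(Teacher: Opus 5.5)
Your outline is the paper's: specialize Lemma~\ref{lem:linear-core-frame} for $q_1$, expand~\eqref{eq:weighted-rooted-exchange} for $q_0$, and reduce to binary weights by multiaffinity. The $q_1$ identity and your splitting of $(s-4)_+$ into two affine bounds are fine. The gap is the $4q_0$ bookkeeping: the classification you state is false and would not yield the identity.

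With $u=v=1$, terms must be matched by degree in the outside weights. A basis $\{e_1,e_2,b,c\}$ gives a quadratic term, $\{e_i,b,c,d\}$ a cubic term, and four outside rays a quartic term.

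\emph{Two-companion bases.} These do not all have $f_B=2$. For $\sigma=(+,+,+)$ one has $e_0=b_\sigma+b_\tau-e_1-e_2$ when $\tau=(+,+,-)$ (vertical edge, $f_B=4$). One has $e_0=b_\sigma+b_\tau-e_2$ when $\tau=(-,+,-)$ (nonhorizontal diagonal, $f_B=3$). Only antipodal pairs have $f_B=2$, and for them $r=0$. These corrections are exactly what cancel the rooted counts $r=2$ and $r=1$ of those pairs. Under your claim, the spurious terms $2N_{{\rm edge},3}+N_{{\rm diag},1}+N_{{\rm diag},2}$ would survive.

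\emph{One-companion bases.} These are cubic, so they correct the triple coefficients, not the diagonals. A horizontal face corner has no companion completion, since its differences are $\pm e_1,\pm e_2$. A nonhorizontal corner has exactly one completion, with $f=3$. A parity triple has two completions, each with $f=3$. This gives coefficients $1,0,-1$, hence $N_{\rm tri}-N_{\rm par}$.

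\emph{Four outside rays.} These cannot produce $N_{\rm tri}$ or $N_{\rm par}$ at all: those terms are cubic, and the four vertices of a face are dependent anyway. Such a basis either contains an antipodal pair ($f_B=2$) or is a star. For a star, $e_0=b_1+b_2+b_3-b_0$ gives $f_B=4$ and the term $-2N_{\rm star}$.

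Two later steps are also wrong as stated.

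\emph{The $n_{\rm pair}$ bound.} The count of same-parity pairs in all directions is the wrong quantity, because $N_{{\rm diag},1}$ and $N_{{\rm diag},2}$ do not occur in $4q_0$. What you need is the split by horizontal faces, $N_{\rm edge}+N_{\rm diag}=\binom{n_+}2+\binom{n_-}2\ge n_{\rm pair}(s)$. Insert this into the second inequality of~\eqref{eq:cube-marginal-pair-bound}, together with $N_{\rm star}\le\lfloor(s-2)_+/2\rfloor$. The star bound holds because a star centered in one class must sit antipodal to the other class's zero vertex. So there are at most two stars, and two stars need all six vertices.

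\emph{The bound $q_1\ge5$ at $s=6$.} Your count $N_{\rm edge}\ge6$ fails. If the two zero vertices are antipodal or vertically adjacent, each horizontal face has three occupied vertices. Then $N_{\rm edge}=4$ and $N_{\rm diag}=N_{\rm par}=2$, so $q_1=5$ exactly. The correct argument inserts $N_{\rm edge}+2N_{\rm diag}-2N_{\rm par}\ge2(s-4)_+=4$ into the $q_1$ formula.

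Your fallback of enumerating binary configurations could repair the inequalities. It only works once the $4q_0$ identity has been derived correctly.
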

\begin{proof}
\emph{Identities.}
Setting $R=u=v=1$ in Lemma~\ref{lem:linear-core-frame} gives the formula for $q_1$.
For $4q_0$, the rooted exchange formula~\eqref{eq:weighted-rooted-exchange} assigns coefficient one to a single outside ray, completed by both companions.
For an outside pair $b_\sigma,b_\tau$, let $r$ count the companions $e_i$, $i=1,2$, for which $\{a,e_i,b_\sigma,b_\tau\}$ is a basis, and put $B=\{e_1,e_2,b_\sigma,b_\tau\}$.
Its coefficient in $4q_0$ is $r-(f_B-2)$ when $B$ is a basis, and $r$ otherwise.
The following table lists all pair types, with representative $\sigma=(+,+,+)$. A dash means that $B$ is dependent.
\begin{center}
\small
\begin{tabular}{@{}c|cccc@{}}
pair type & $\tau$ & $r$ & $f_B$ & coefficient \\
\midrule
horizontal edge & $(-,+,+)$ & $1$ & --- & $1$ \\
horizontal diagonal & $(-,-,+)$ & $2$ & --- & $2$ \\
vertical edge & $(+,+,-)$ & $2$ & $4$ & $0$ \\
nonhorizontal diagonal & $(-,+,-)$ & $1$ & $3$ & $0$ \\
antipodal pair & $(-,-,-)$ & $0$ & $2$ & $0$ \\
\end{tabular}
\end{center}
Here horizontal means contained in a face $\sigma_3=\pm1$.
Thus the pair terms contribute exactly $N_{\rm edge}+2N_{\rm diag}$.

A triple without an antipodal pair is a face corner or a parity triple, with rooted contribution one.
A face corner admits a companion completion only in its remaining coordinate direction, with exchange number three, since the selected ray is the opposite pair's sum minus the signed companion.
Thus horizontal and nonhorizontal face triples have coefficients one and zero, respectively.
A parity triple has two companion completions of exchange number three, giving coefficient minus one. A triple with an antipodal pair contributes zero.

A four-ray basis containing an antipodal pair has exchange number two, since that pair sums to the selected ray.
Without such a pair, a four-set is a dependent square, a parity tetrahedron of zero weight, or a cube star.
For a cube star, sign changes reduce the calculation to the center $b_0=b_{(+,+,+)}$ and its neighbors
\[
 b_1=b_{(-,+,+)},\quad b_2=b_{(+,-,+)},\quad b_3=b_{(+,+,-)}.
\]
Since $b_i-b_0=-e_i$ for $i=1,2,3$, these four rays form a basis, with squared determinant $1/4$.
Moreover,
\[
 a=b_1+b_2+b_3-b_0.
\]
All four coefficients are nonzero, so $f_B=4$ and the star contributes $-2x_{b_0}x_{b_1}x_{b_2}x_{b_3}$ to $4q_0$.
This explains the term $-2N_{\rm star}$.
These contributions give the formula for $q_0$.

\emph{Comparisons.}
For weights in $[0,1]$, deletion increases products.
Deleting a star's center or vertical neighbor injects stars into parity triples or horizontal triples, respectively. Parity triples inject into their horizontal diagonals because each parity class retains at most three vertices.
Thus
\[
 N_{\rm star}\le N_{\rm par}\le N_{\rm diag},\quad
 N_{\rm star}\le N_{\rm tri},\quad N_{\rm par}\le2.
\]
For binary weights, the horizontal face sizes $n_+,n_-$ satisfy $n_++n_-=s$ and
\[
 N_{\rm edge}+N_{\rm diag}=\binom{n_+}2+\binom{n_-}2
 \quad\text{and}\quad N_{\rm tri}=\binom{n_+}3+\binom{n_-}3.
\]
The inequality $\binom{k}{2}\ge2k-3$ and multiaffinity therefore give $N_{\rm edge}+N_{\rm diag}\ge2s-6$ throughout $[0,1]^6$.
Hence
\[
 N_{\rm edge}+2N_{\rm diag}-2N_{\rm par}
 \ge N_{\rm edge}+N_{\rm diag}-N_{\rm par}\ge2s-8.
\]
Since $N_{\rm diag}\ge N_{\rm par}$ also makes the left side nonnegative, the weighted pair bound follows.
Subtracting the claimed weighted lower bound from the formula for $4q_0$ leaves $(N_{\rm diag}-N_{\rm par})+(N_{\rm tri}-N_{\rm star})\ge0$.

For binary weights, balancing $n_+,n_-$ minimizes the pair count, giving $N_{\rm edge}+N_{\rm diag}\ge n_{\rm pair}(s)$.
One star occupies four vertices, two occupy at least six, and $N_{\rm star}\le2$. Hence $N_{\rm star}\le\lfloor(s-2)_+/2\rfloor$, proving the cardinality bound and its stated values.
At $s=6$, the weighted pair bound and the formula for $q_1$ give $q_1\ge5$.

For the remaining binary inequality, note that for integral $0\le k\le4$,
\[
 g(k):=5\binom k2-3\binom k3-5k+6
 =1+\frac{(k-1)(k-2)(5-k)}2\ge1+(k-2)_+.
\]
For $k\le2$ the product is nonnegative; for $k\ge2$, subtracting $k-1$ leaves $(k-2)\{2-(k-3)^2\}/2\ge0$.
Each parity class supports at most one triple, and two triples require $s=6$.
Hence $2N_{\rm par}\le2+(s-4)_+$, so
\[
 g({n_+})+g({n_-})\ge2+({n_+}-2)_++({n_-}-2)_+
 \ge2+(s-4)_+\ge2{N_{\rm par}}.
\]
Together,
\begin{align*}
 &5{N_{\rm edge}}+10{N_{\rm diag}}+3{N_{\rm tri}}-7{N_{\rm par}}-6{N_{\rm star}}-5s+12\\
 \ge{}& 5{N_{\rm edge}}+10{N_{\rm diag}}-3{N_{\rm tri}}-7{N_{\rm par}}-5s+12\\
 ={}& g({n_+})+g({n_-})+5({N_{\rm diag}}-{N_{\rm par}})-2{N_{\rm par}} \ge 0.
\end{align*}
\end{proof}
The dense array has a corresponding spectral identity.
Let ${M_{\rm omit}}=\sum_{j=1}^3{\Pi_{m_j}}$, let ${G_{\rm omit}}$ be the Gram matrix of the three omitted unit normals, and set ${g_{\rm omit}}=\det {G_{\rm omit}}\ge0$.
Since $\operatorname{tr}{G_{\rm omit}}=3$ and $e_2({G_{\rm omit}})=9/4$, where $e_k$ is the $k$th elementary symmetric function of the eigenvalues, Sylvester's identity gives
\begin{equation}\label{eq:dense-frame-spectral-identities}
 \begin{gathered}
 \det(c{\Id}-{M_{\rm omit}})=c^2(c-3/2)^2-c{g_{\rm omit}},\\
 e_3(c{\Id}-{M_{\rm omit}})=4c^3-9c^2+\frac92c-{g_{\rm omit}}.
 \end{gathered}
\end{equation}
These two identities supply all dense-array endpoints below.
\begin{lemma}\label{lem:cube-constant-bound}
In either retained frame system, if every retained weight is at most $R$, then
\[
 q_0\ge\frac14Ruvs.
\]
After normalizing $R=1$ and specializing the other weights to zero or one, if exactly one companion is active then also $q_0\ge\frac12(s-4)_+$.
\end{lemma}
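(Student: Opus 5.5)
The plan is to reduce both inequalities to vertex checks with the interpolation identity, and then read those checks off Lemma~\ref{lem:full-companion-cube}. By the rooted exchange formula of Lemma~\ref{lem:rooted-exchange-polynomial}, $4q_0$ is a polynomial in the retained weights other than $R$. It is homogeneous of degree four once $R$ is included, and affine separately in each of $u,v$ and the six outside weights. After normalizing $R=1$, both $q_0-\frac14uvs$ and $q_0-\frac12(s-4)_+$ are multiaffine on $[0,1]^8$; the second is only checked at binary points in any case. It therefore suffices to verify the first bound at the vertices $u,v\in\{0,1\}$ with binary outside weights.

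At the vertex $u=v=1$ the target is $4q_0\ge s$. The formula
\[
4q_0=s+N_{\rm edge}+2N_{\rm diag}+N_{\rm tri}-N_{\rm par}-2N_{\rm star}
\]
from Lemma~\ref{lem:full-companion-cube}, together with its weighted comparison~\eqref{eq:cube-marginal-pair-bound}, gives $4q_0\ge s+N_{\rm edge}+N_{\rm diag}-N_{\rm star}$. The injection $N_{\rm star}\le N_{\rm par}\le N_{\rm diag}$ then yields $4q_0\ge s$, which is what is needed. This holds uniformly in both configurations, because the cube model of Lemma~\ref{lem:linear-core-frame} is common to types II and III.

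At the vertices where at least one companion vanishes, the target is $q_0\ge0$, together with the sharper bound $q_0\ge\frac12(s-4)_+$ when exactly one companion is active. I would rederive the rooted exchange coefficients of \eqref{eq:weighted-rooted-exchange} with one or both companions deleted, repeating the pair, triple and four-set table from the proof of Lemma~\ref{lem:full-companion-cube}.

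With a single companion, say $u=1$, $v=0$, the singleton outside terms disappear, since a single outside ray needs both companions to complete a basis with $a$. A pair $\{b_\sigma,b_\tau\}$ then contributes only through the completion $\{a,e_1,b_\sigma,b_\tau\}$, and I expect a coefficient of one for diagonals and edges not in direction $1$. The remaining contributions are triples completed by $a$ alone, namely horizontal face triples and parity triples, minus the $(f_B-2)$ corrections from outside four-sets. Those corrections are the stars, with coefficient $-2$, and four-sets containing an antipodal pair, with coefficient $0$. For binary weights, the bound $q_0\ge\frac12(s-4)_+$ should follow from a counting argument like the proof of $4q_0\ge s+n_{\rm pair}(s)-\lfloor (s-2)_+/2\rfloor$. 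In each parity class, one balances the number of occupied pairs against the at most two stars.

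With both companions vanishing, only outside triples and four-sets remain. I would show that the positive parity and face triple terms dominate twice the star count, using the injection $N_{\rm star}\le N_{\rm tri}$.

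I expect the one-companion case to be the main obstacle. The exchange table must be recomputed there, and the asymmetry between the two companion directions of the cube means that edges in direction $1$ and direction $2$ enter differently. The bound $\frac12(s-4)_+$ has no slack at $s=4$, so the tightest configurations have to be checked by hand. If the counting does not close directly, I would enumerate occupancy patterns up to the cube symmetries preserving the chosen direction. This is a finite list of $2^6$ binary patterns modulo symmetry, and the same enumeration also confirms $q_0\ge0$ at the all-companion-free vertices.
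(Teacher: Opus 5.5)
Your overall structure matches the paper: normalize $R=1$, pass to binary weights, split by the number of active companions, and settle $u=v=1$ via $N_{\rm star}\le N_{\rm diag}$. The gap is the one-companion vertex. It carries the second assertion and part of the first (where it reads $q_0\ge0$), and your accounting there is wrong.

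With only $e_1$ active, the rooted exchange formula~\eqref{eq:weighted-rooted-exchange} also contains the bases $B=\{e_1,b,c,d\}$ formed by the companion and three outside rays. Each contributes $-(f_B-2)x_B$, but you subtract corrections only for four-sets of outside rays. These terms do not vanish. For such a basis, projecting away from $e_1$ sends $b,c,d$ to three distinct corners of a square, and the opposite pair sums to $a$ plus $0$ or $\pm e_1$. So $f_B\in\{2,3\}$, and $f_B=3$ occurs exactly for face corners in the faces $\sigma_1=\pm1$ and for parity triples.

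Two further corrections to your list:
\begin{itemize}
\item The rooted triple term counts every outside triple without an antipodal pair. That includes face corners in the faces $\sigma_1=\pm1$ and $\sigma_2=\pm1$, not only horizontal face triples and parity triples.
\item The pairs completed by $a,e_1$ are those whose signs $(\sigma_2,\sigma_3)$ differ in exactly one place. So diagonals with unchanged coordinate $1$ do not count.
\end{itemize}
The correct expression is
\[
 4q_0=N_{{\rm root},0}+N_{{\rm root},1}-N_{{\rm exc},1}-2N_{\rm star},
\]
where $N_{{\rm root},1}$ is the weight of outside pairs completed by $a,e_1$ and $N_{{\rm exc},1}$ is that of the count-three bases above. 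For a single occupied parity triple this gives $4q_0=2$, whereas your list gives at least $3$. Your proposed counting would therefore be done on the wrong polynomial.

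The missing step is to compare the exchange term with the rooted triples. Every count-three triple has independent sign vectors, so $N_{{\rm exc},1}\le N_{{\rm root},0}$. Then $q_0\ge0$ follows from $N_{{\rm root},1}\ge N_{\rm diag}+N_{{\rm edge},2}\ge2N_{\rm star}$. In fact $N_{{\rm root},0}-N_{{\rm exc},1}$ equals the number of face corners in the faces $\sigma_2=\pm1$ and $\sigma_3=\pm1$. Each star contains one corner of each kind, so even $4q_0\ge N_{{\rm root},1}$ holds.

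For the bound $\frac12(s-4)_+$, the relevant split is by the product $\sigma_2\sigma_3$, not by parity class. With class sizes $k$ and $s-k$, one has $N_{{\rm root},1}=k(s-k)\ge4,8$ at $s=5,6$. Together with $N_{\rm star}\le1,2$ this gives $4q_0\ge2(s-4)_+$.

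Your enumeration fallback could replace this argument only after the coefficients are recomputed correctly. It would also have to cover every placement of the two zero-weight vertices, since these differ between configurations and selected rays.

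Finally, in the companion-free case, $N_{\rm star}\le N_{\rm tri}$ supplies only one copy of $N_{\rm star}$. You also need $N_{\rm star}\le N_{\rm par}$.
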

\begin{proof}
Normalize $R=1$ and use multiaffinity to take all weights binary.
Retain the cube notation of Lemma~\ref{lem:full-companion-cube}, so ${N_{\rm edge}}={N_{{\rm edge},1}}+{N_{{\rm edge},2}}$.
Let $N_{{\rm root},0}$ count outside triples completed to a basis by $a$, equivalently triples with independent sign vectors.
By the injections of Lemma~\ref{lem:cube-marginals}, $N_{{\rm root},0}\ge N_{\rm tri}+N_{\rm par}\ge2N_{\rm star}$.
With no active companion, the rooted exchange identity~\eqref{eq:weighted-rooted-exchange} gives
\[
 4q_0={N_{{\rm root},0}}-2{N_{\rm star}}\ge0.
\]
Indeed, a four-ray basis containing an antipodal pair has $f_B=2$ and contributes zero, while every other outside basis of positive weight is a cube star and has $f_B=4$.
If only the companion $e_i$ is active, let ${N_{{\rm root},i}}$ count outside pairs completed by $a,e_i$, and let ${N_{{\rm exc},i}}$ sum the negative exchange coefficients of bases consisting of $e_i$ and three outside rays.
Then
\[
 4q_0={N_{{\rm root},0}}+{N_{{\rm root},i}}-{N_{{\rm exc},i}}-2{N_{\rm star}},\quad\text{with } {N_{{\rm exc},i}}\le {N_{{\rm root},0}}.
\]
Projection away from $e_i$ leaves three distinct square corners. Their opposite pair represents $a$ with coefficients one, while the $e_i$ coefficient is zero or $\pm1$.
Thus exchange counts are two or three, and each count-three triple contributes to ${N_{{\rm root},0}}$.
Also ${N_{{\rm root},i}}\ge {N_{\rm diag}}+{N_{{\rm edge},j}}$ for the other companion coordinate $j\ne i$, and
\[
 {N_{\rm diag}}+{N_{{\rm edge},j}}\ge2{N_{\rm star}}.
\]
A nonfull horizontal face supports at most one star and then contains one diagonal and one edge in direction $j$.
A full face supplies two of each, while the other face has at most two vertices, allowing at most two stars.
This proves $q_0\ge0$.
For the stronger bound, the projected class sizes $k,s-k$ are at least one when $s=5$ and two when $s=6$, so $N_{{\rm root},i}=k(s-k)\ge4,8$, respectively.
With $N_{\rm star}\le1,2$, this gives $4q_0\ge2(s-4)_+$.

When both companions are active, the weighted lower bound for $q_0$ and ${N_{\rm star}}\le {N_{\rm diag}}$ give $4q_0\ge s+{N_{\rm edge}}\ge s$.
Interpolation completes the proof.
\end{proof}

\begin{lemma}\label{lem:maxray-mean-packets}
For a largest retained weight $R$, one has ${Q}\ge uvh(R+t)$.
If $R>0$ and $t\ge2R/3$, then
\[
 {Q}\ge uvh(R+h)+\frac{23}{324}R^4.
\]
\end{lemma}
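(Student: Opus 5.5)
We expand $Q=q_0+tq_1+t^2q_2+t^3q_3$ in the core weight and compare each coefficient with the matching coefficient of the target. Write $h=t+s/4$ and expand
\[
 uvh(R+t)=uv\Bigl(t^2+t\bigl(R+\tfrac s4\bigr)+\tfrac{Rs}{4}\Bigr).
\]
The coefficients of the target are then as follows. The $t^3$ coefficient is $0$. The $t^2$ coefficient is $uv$. The $t$ coefficient is $uvR+uvs/4$. The constant coefficient is $Ruvs/4$.

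For the first bound we compare coefficient by coefficient.
\begin{itemize}
\item[(i)] The constant term is handled by Lemma~\ref{lem:cube-constant-bound}, which gives $q_0\ge Ruvs/4$.
\item[(ii)] The linear term is handled by Lemma~\ref{lem:linear-core-frame}, which gives $q_1\ge Ruv+uvs/2\ge Ruv+uvs/4$.
\item[(iii)] For the quadratic term, \eqref{eq:common-high-core-coefficients} gives $q_2=R(u+v)-uv+\frac{3R-u-v}{4}s$. Since $u,v\le R$, we have $R(u+v)\ge 2uv$ and $3R-u-v\ge R\ge0$, so $q_2\ge uv$.
\item[(iv)] For the cubic term, $q_3=R-(u+v)/3\ge R/3\ge0$.
\end{itemize}
Since $t\ge0$, summing these gives $Q\ge uvh(R+t)$.

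For the second bound we need the extra quantity
\[
 uvh(R+h)-uvh(R+t)=uvh\cdot\frac s4,
\]
together with $\tfrac{23}{324}R^4$. We normalize $R=1$, so all weights lie in $[0,1]$ and $t\ge 2/3$. The plan is to collect the surplus left over in the coefficient comparisons and show that, for $t\ge2/3$, it dominates
\[
 \frac{uvs}{4}\Bigl(t+\frac s4\Bigr)+\frac{23}{324}.
\]
\begin{itemize}
\item[(a)] The cubic surplus is $t^3\bigl(1-(u+v)/3\bigr)\ge t^3/3\ge 8/81$. This is already larger than $23/324$. Note that $8/81=32/324$, so a margin of $9/324=1/36$ remains for absorbing losses.
\item[(b)] The quadratic surplus is $t^2\bigl(u+v-2uv+\frac{3-u-v}{4}s\bigr)$, which contains the term $t^2 s/4$.
\item[(c)] The linear surplus is at least $t\,uvs/4$. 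It also contains the bracketed cube terms of Lemma~\ref{lem:linear-core-frame}, but we will not need those.
\end{itemize}
The required term $uvs\,t/4$ is matched exactly by the linear surplus (c).

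It remains to dominate $uvs^2/16$. Since $u,v\le1$, we have $(3-u-v)/4\ge 1/4$. The quadratic surplus therefore supplies at least $t^2s/4\ge s/9$. Because $s\le6$, we have $uvs^2/16\le 6s/16=3s/8$. This is larger than $s/9$, so the direct comparison falls short when $s$ is large.

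The main obstacle is this regime: $uv$ and $s$ both large while $t$ is near $2/3$. We would handle it in three steps.
\begin{enumerate}
\item Use multiaffinity. By the interpolation identity, $Q-uvh(R+h)$ is affine in each retained weight. This is because $h$ involves only the outside weights linearly, and $uvh^2$ is quadratic in $s$. So the difference is not fully multiaffine, and instead of interpolating the whole difference we keep the variable $s$ explicit.
\item Treat the term $t\cdot s\cdot(u+v)/2$ in $q_1$ separately. Its coefficient $(R(u+v)-uv)/2$ exceeds $uv/2$. Together with the $q_2$ term $(3-u-v)s/4$, it gives a quadratic polynomial in $t$ with positive coefficients.
\item Bound $uvs^2/16\le uvs\cdot 6/16$. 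Compare this with $t\,s(u+v-uv)/2+t^2 s(3-u-v)/4$. At $u=v=1$ this comparison reads $ts/2+t^2s/4\ge 3s/8$ versus $1/3+1/9\approx0.44>0.375$. This is favorable, and since $t\ge2/3$ the comparison only improves as $t$ grows.
\end{enumerate}
In these bounds the $q_0$ surplus is not needed at all. The slack $1/36$ from the cubic term absorbs the remaining losses. The final check would be a short verification at the vertices $u,v\in\{0,1\}$ with $s$ treated as a parameter in $[0,6]$, for $t\ge 2/3$.
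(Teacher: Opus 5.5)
Your proof of the first inequality is correct and matches the paper's coefficientwise comparison. The second part has a genuine gap, and it comes from double counting.

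The $t$-coefficient of $uvh(R+h)=uv(t+\frac s4)(R+t+\frac s4)$ is $uv(R+\frac s2)$. At $u=v=R$ this equals exactly the non-incidence part $Ruv+\frac{R(u+v)-uv}{2}s$ of $q_1$ in Lemma~\ref{lem:linear-core-frame}. In (ii) and (c) you have already spent that term: once on the $t$-coefficient $uvs/4$ of $uvh(R+t)$, and once on the extra $uvst/4$. Reusing $ts(u+v-uv)/2$ in your step~3 is therefore illegitimate. The remaining non-incidence linear surplus is $\frac{R(u+v)-2uv}{2}s$, which vanishes at $u=v=R$.

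Correct bookkeeping at $R=u=v=1$, $t=2/3$ is where the reduction lands, since $Q-uvh(R+h)$ has nonnegative $t,t^2,t^3$ coefficients and is affine in $u,v$. There the exact identity is
\[
 Q-uvh(R+h)=\frac8{81}+\frac s9-\frac{s^2}{16}+q_{\rm corr},\qquad
 q_{\rm corr}=\Bigl(q_0-\frac s4\Bigr)+\frac23\Bigl(q_1-1-\frac s2\Bigr).
\]
Here $q_{\rm corr}$ consists precisely of the $q_0$ surplus and the bracketed cube terms of $q_1$ that you declare unnecessary. Without it you get exactly $23/324$ at $s=2$, where your slack of $1/36$ is used up. At $s=3$ you get $-169/1296$, and worse for larger $s$. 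So no allocation of the coarse bounds $q_0\ge Ruvs/4$ and $q_1\ge Ruv+uvs/2$ can close the argument.

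The missing ingredient is the incidence structure of the outside rays. $Q$ is affine and $uvh(R+h)$ is convex in each outside weight, so $Q-uvh(R+h)$ is separately concave in the outside weights and you may take them binary. Treating $s$ as a free continuous parameter, as you propose, discards exactly the information you need. For binary weights, Lemma~\ref{lem:cube-marginals} gives $5N_{\rm edge}+10N_{\rm diag}+3N_{\rm tri}-7N_{\rm par}-6N_{\rm star}\ge5s-12$, i.e.\ $12q_{\rm corr}\ge5s-12$. Together with $q_{\rm corr}\ge0$ this yields
\[
 Q-uvh(R+h)\ge\frac8{81}+\frac s9-\frac{s^2}{16}+\max\Bigl\{0,\frac{5s-12}{12}\Bigr\},
\]
whose minimum over integers $0\le s\le5$ is $23/324$, attained at $s=2$. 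At $s=6$ this bound gives only $5/324$. There you need the sharper values $q_0\ge5/2$ and $q_1\ge5$ from the same lemma, which give $q_{\rm corr}\ge5/3$ and the bound $59/324$. The case $uv=0$ is fine as you have it, via the cubic term alone.
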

\begin{proof}
The common cube bound, the frame identity, and the coefficients~\eqref{eq:common-high-core-coefficients} compare coefficientwise with $uv(t+s/4)(R+t)$, proving the first assertion.
Set $q_{\rm rem}=Q-uvh(R+h)$.
Its coefficients of $t,t^2,t^3$ are nonnegative: the first is at least $[R(u+v)-2uv]s/2$, and the other two are
\[
 R(u+v)-2uv+(3R-u-v)s/4\quad\text{and}\quad R-(u+v)/3.
\]
Monotonicity in $t$ and separate affinity in $u,v$ reduce to $t=2R/3$ and $u,v\in\{0,R\}$.
If $uv=0$, then ${q_{\rm rem}}={Q}\ge t^3q_3\ge16R^4/81$.
Otherwise scale to $R=u=v=1$. Separate concavity of ${q_{\rm rem}}$ in the outside weights reduces them to binary values.
The formulas for $q_0,q_1$ in Lemma~\ref{lem:full-companion-cube} give
\[
 q_{\rm rem}=\frac8{81}+\frac s9-\frac{s^2}{16}+q_{\rm corr},
\]
where
\[
12q_{\rm corr}=5N_{\rm edge}+10N_{\rm diag}+3N_{\rm tri}-7N_{\rm par}-6N_{\rm star}.
\]
Moreover, ${q_{\rm corr}}=(q_0-s/4)+(2/3)(q_1-1-s/2)\ge0$ by the constant-coefficient bound and the frame identity.
Therefore
\[
 {q_{\rm rem}}\ge\frac8{81}+\frac s9-\frac{s^2}{16}
                 +\max\left\{0,\frac{5s-12}{12}\right\}.
\]
For integer $s\le5$, concavity on $[0,2]$ and $[3,5]$ gives respective endpoint minima $23/324$ and $155/1296>23/324$.
At $s=6$, Lemma~\ref{lem:cube-marginals} gives $q_0\ge5/2$, $q_1\ge5$, and $q_{\rm corr}\ge5/3$, hence $q_{\rm rem}\ge59/324>23/324$.
Restore $R$ by homogeneity.
\end{proof}

\subsection{The zero-core bound}

\begin{proposition}
\label{prop:sharp-zero-core-exchange}
In either nonprincipal configuration, suppose that $t=0$ and that $R>0$ is a largest retained weight.
Then
\[
 81{Q}\ge10{D_H}.
\]
In particular, $9{Q}\ge{D_H}$.
\end{proposition}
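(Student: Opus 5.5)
At $t=0$ both sides are homogeneous of degree four in the retained weights, and $Q=q_0$ is the constant coefficient. The plan is to prove the polynomial inequality $81q_0-10D_H\ge0$ on the cube $\{0\le x_k\le R\}$, without using positive definiteness. After normalizing $R=1$, fix the selected ray at weight one. Then $D_H=\frac14\sum_B x_B$ over retained bases, and $4q_0$ is given by the rooted exchange formula \eqref{eq:weighted-rooted-exchange}.

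\textbf{Reduction to vertices.} Both $Q$ and $D_H$ are affine separately in each remaining retained weight. The interpolation identity therefore reduces the claim to binary companion weights $u,v\in\{0,1\}$ and binary outside weights on the six occupied cube vertices. The task becomes a finite check organized by the cube model of Lemma~\ref{lem:linear-core-frame}.

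\textbf{Expressing both sides.} For $4q_0$ I use the case formulas: Lemma~\ref{lem:full-companion-cube} when $u=v=1$, and the computation in the proof of Lemma~\ref{lem:cube-constant-bound} when one or no companion is active. For $D_H$ at $t=0$, I expand Cauchy--Binet into cube incidences. With $R=1$, each basis containing the selected ray contributes $\tfrac14$ times the weight of the completing triple. Bases avoiding it contribute $\tfrac14 x_B$. Since every independent retained four-tuple has squared determinant $1/4$ (Lemma~\ref{lem:frame-geometry}), $4D_H$ equals the weighted count of bases among the nine rays. This count is the rooted-triple term $R\sum x_J$ plus the count of unrooted bases.

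The rooted exchange formula then gives
\[
4(81Q-10D_H)=71\cdot(\text{rooted triples})-\sum_B(81f_B-152)x_B.
\]
The coefficient $81f_B-152$ equals $10$, $91$, or $172$ for $f_B=2,3,4$. Because every unrooted basis carries positive weight in this expression, the needed inequality is
\[
71\,N_{\rm rooted}\ge \sum_B (81 f_B-152)\,x_B .
\]
Here $N_{\rm rooted}$ counts triples $J$ with $J\cup\{a\}$ a basis.

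\textbf{Finite check.} I would first show that, in each companion case, the left side dominates via the injections of Lemma~\ref{lem:cube-marginals}. Each basis $B$ with $f_B\ge3$ contains at least $f_B$ triples completed by $a$, namely $B\setminus\{b\}$ for the $f_B$ exchangeable $b$. Under binary weights a double count then gives roughly $N_{\rm rooted}\ge \sum_B f_B x_B/\text{(multiplicity)}$. The multiplicity is bounded by how many bases share a rooted triple, which is at most the number of remaining rays, about five. This bound is crude. I therefore expect instead to enumerate orbits by the horizontal face sizes $(n_+,n_-)$ and the companion pattern, using the symmetry orbits of Lemma~\ref{lem:frame-geometry}, and to compare the two sides explicitly.

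\textbf{Main obstacle.} The hard step is identifying and certifying the extremal configuration where $81Q=10D_H$. Stars contribute $f_B=4$ and thus carry the largest negative coefficient, so the tight case is probably dense: $u=v=1$ and all six outside vertices occupied, where $D_H$ is largest relative to $q_0$. There I would compute $D_H$ directly from the spectral identities \eqref{eq:dense-frame-spectral-identities}, writing $H$ in terms of $c\Id-M_{\rm omit}$ with $c$ the common full-frame weight. I would then check the value $10/81$ against $4q_0\ge10$ from Lemma~\ref{lem:full-companion-cube}. The remaining sparser configurations should have slack and can be handled by the cardinality bound $4q_0\ge s+n_{\rm pair}(s)-\lfloor (s-2)_+/2\rfloor$ together with a crude upper bound on $D_H$ by basis counting. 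The final statement $9Q\ge D_H$ follows since $10/81>1/9$.
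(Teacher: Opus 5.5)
Your outline follows the paper's proof. It has the same reduction to binary weights with $R=1$ and the same split by the number $n$ of active companions and $s$ of active outside vertices. It also treats the dense array the same way: $H=3\Id-M_{\rm omit}$ gives $4D_H=81-12g_{\rm omit}\le81$, which is compared with $4q_0\ge10$. Your guess about the extremal case is right. In type III the three omitted vectors are dependent, so $g_{\rm omit}=0$. The two missing cube vertices are vertically adjacent, which gives $4q_0=10$ and equality. Your identity $4(81Q-10D_H)=71N_{\rm rooted}-\sum_B(81f_B-152)x_B$ is correct, but it only restates the claim. The content of the proposition is the finite check, and that is where your proposal has a gap.

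The tools you name for the sparser configurations do not close it.

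\emph{Missing $q_0$ bounds for $n\le1$.} The cardinality bound $4q_0\ge s+n_{\rm pair}(s)-\lfloor(s-2)_+/2\rfloor$ is proved in Lemma~\ref{lem:full-companion-cube} only when both companions are active. For $n\le1$, Lemma~\ref{lem:cube-constant-bound} gives only $q_0\ge0$ or $4q_0\ge2(s-4)_+$, which cannot control a ratio. You need explicit counts:
\begin{itemize}
\item for $n=0$, use $4q_0=N_{{\rm root},0}-2N_{\rm star}$ with $N_{{\rm root},0}=\binom{s}{3}-N_{\rm anti}(s-2)$, together with rank constraints on $N_{\rm anti}$ and $N_{\rm star}$;
\item for $n=1$, split the outside rays by $\varepsilon_2\varepsilon_3$, use $N_{{\rm root},1}=n_{23,+}n_{23,-}$, and apply the matching bound on count-three exchanges.
\end{itemize}

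\emph{The crude basis count is too weak.} The bound $4D_H\le\binom{m}{4}$, with $m=1+n+s$, fails exactly where the $q_0$ bounds are tight or nearly so.
\begin{itemize}
\item At $(n,s)=(2,5)$ the cardinality bound gives $4q_0\ge8$, a value attained in type III, yet $81\cdot8<10\binom{8}{4}=700$.
\item The same failure occurs at $(1,6)$ with $4q_0\ge8$.
\item At $(1,5)$, the bound $4q_0\ge4$ also fails against $\binom{7}{4}=35$.
\end{itemize}
The paper handles these cases with sharper determinant bounds. It uses the trace bound $4D_H\le m^4/64$, from $\tr H=m$ and AM--GM, which gives $64$ at $m=8$. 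At $(1,5)$ it uses Maclaurin's inequality with $e_2(H)=17$, giving $4D_H\le289/9$.

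Without such sharper determinant bounds, or an actual configuration-by-configuration evaluation of both sides, the sparse cases remain unproved.
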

\begin{proof}
Normalize $R=1$ and interpolate to binary retained weights. At $t=0$, write
\[
 q=4q_0=4{Q}\quad\text{and}\quad {N_{\rm bas}}=4{D_H}.
\]
If the active basis count ${N_{\rm bas}}$ is zero, the rooted exchange formula gives $q=0$. Assume henceforth ${N_{\rm bas}}>0$.

Let $s$ and $n\in\{0,1,2\}$ count active outside rays and companions.
If $N_{\rm anti}$ counts outside antipodal pairs, then
\begin{equation*}
 {N_{{\rm root},0}}=\binom{s}{3}-{N_{\rm anti}}(s-2),
\end{equation*}
since a triple of cube vectors is independent exactly when it has no antipodal pair.
Recall $N_{\rm star}=0$ for $s\le3$ and $N_{\rm star}\le1$ for $s\le5$.

With $m=1+n+s$ active rays, $\operatorname{tr}H=m$, so AM--GM gives
\begin{equation*}
 {N_{\rm bas}}\le\frac{m^4}{64}.
\end{equation*}
Also ${N_{\rm bas}}\le\binom m4$.

For $n=0$, $q={N_{{\rm root},0}}-2{N_{\rm star}}$. Rank forces $s\ge3$, with $q={N_{\rm bas}}=1$ at $s=3$.
For $s=4$, rank excludes two antipodal pairs, and one pair excludes stars. For $s=5$, $N_{\rm anti}\le2$ gives $N_{{\rm root},0}\ge4$ and $N_{\rm star}\le1$.
At $s=6$, the opposite-parity omitted vertices are adjacent or antipodal.
The respective triples $({N_{\rm anti}},{N_{\rm star}},{N_{{\rm root},0}})$ are $(2,2,12)$ and $(3,0,8)$, so $q=8$.

For $n=1$, choose companion $e_1$ and partition the outside rays by $\varepsilon_2\varepsilon_3$, with part sizes $n_{23,+},n_{23,-}$ positive by rank.
The exchange decomposition in Lemma~\ref{lem:cube-constant-bound} gives
\[
 q=N_{{\rm root},0}+N_{{\rm root},1}-N_{{\rm exc},1}-2N_{\rm star},\quad
 \text{where } N_{{\rm root},1}=n_{23,+}n_{23,-},
\]
with 
\[
N_{{\rm exc},1}\le\min\left\{N_{{\rm root},0},
 \left\lfloor\frac{n_{23,+}}2\right\rfloor n_{23,-}
 +\left\lfloor\frac{n_{23,-}}2\right\rfloor n_{23,+}\right\}.
\]
For the second bound, a count-three exchange uses a same-part face diagonal, and these diagonals form a matching.
Antipodal pairs have exchange count two, while pairs differing only at coordinate $1$ give dependence.
For $s\le3$, $q\ge {N_{{\rm root},1}}\ge s-1$.
At $s=4$, no star gives $q\ge3$, while one star has part sizes $2,2$ and gives $q\ge4-2=2$.
At $s=5$, no star gives $q\ge4$, while one star has part sizes $2,3$ and gives $q\ge6-2=4$.
The occupied frame sizes are $2,3,2$, so
\[
 e_2(H)=\binom22+\binom32+\binom22
       +\frac34(2\cdot3+2\cdot2+3\cdot2)=17.
\]
Maclaurin's inequality gives
\[
 {N_{\rm bas}}=4\det H\le\frac{e_2(H)^2}{9}
   =\frac{289}{9}<\frac{81}{10}\,4\le\frac{81}{10}q.
\]
At $s=6$, ${N_{{\rm root},0}}-2{N_{\rm star}}=8$, while part sizes $2,4$ or $3,3$ give ${N_{{\rm root},1}}\ge8$ and ${N_{{\rm exc},1}}\le8$, hence $q\ge8$.

For $n=2$, Lemma~\ref{lem:cube-marginals} gives $q\ge s$ and, at $s=4,5,6$, the sharper bounds $q\ge5,8,10$.
The table combines these lower bounds with ${N_{\rm bas}}\le\binom{1+n+s}{4}$. Its entries $64$ use the AM--GM trace bound, and $289/9$ is the Maclaurin estimate.
Full rank requires $s\ge3-n$, and there are six outside rays, so the table covers every case except $(n,s)=(2,6)$.
\begin{center}
\begin{tabular}{@{}ccc@{\quad}|ccc@{\quad}|ccc@{\quad}}
\multicolumn{3}{c}{$n=0$}&\multicolumn{3}{c}{$n=1$}&\multicolumn{3}{c}{$n=2$}\\
\cmidrule(lr){1-3}\cmidrule(lr){4-6}\cmidrule(lr){7-9}
$s$&$q\ge$&$N_{\rm bas}\le$&$s$&$q\ge$&$N_{\rm bas}\le$&$s$&$q\ge$&$N_{\rm bas}\le$\\
\midrule
3&1&1&2&1&1&1&1&1\\
4&2&5&3&2&5&2&2&5\\
5&2&15&4&2&15&3&3&15\\
6&8&35&5&4&$289/9$&4&5&35\\
 && &6&8&64&5&8&64
\end{tabular}
\end{center}
Every listed comparison gives $81q>10N_{\rm bas}$.

For the remaining fully occupied array, $H=3{\Id}-{M_{\rm omit}}$, so the dense-frame determinant identity~\eqref{eq:dense-frame-spectral-identities} gives
\[
 {N_{\rm bas}}=4\det H=81-12{g_{\rm omit}}\le81.
\]
Together with $q\ge10$, this proves $81q\ge10{N_{\rm bas}}$. Interpolation and homogeneity cover all weights.
\end{proof}

\subsection{Continuation from the two endpoints}

\begin{lemma}\label{lem:uniform-linear-core}
Let ${p_{\rm cmp}}(t)=9{Q}(t)-{D_H}(t)$ and write ${p_{{\rm cmp},1}}$ for its coefficient of $t$.
If every retained weight is at most the selected weight $R$, then
\[
 {p_{{\rm cmp},1}}\ge0.
\]
\end{lemma}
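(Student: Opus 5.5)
The plan is to compute $p_{{\rm cmp},1}=9q_1-d_1$ directly, where $d_1$ is the coefficient of $t$ in $D_H$, and then compare term by term using the formula for $q_1$ in Lemma~\ref{lem:linear-core-frame}.

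\emph{Coefficient of $D_H$.} The Cauchy--Binet coefficient formula gives
\[
 d_1=\sum_{|J|=3}g_Jx_J .
\]
This is a sum over all retained triples, including those containing the selected ray $a$. Split it into three parts:
\begin{itemize}
\item triples containing $a$, which contribute $R\cdot\{\text{pair terms}\}$;
\item triples meeting the companions $e_1,e_2$ but not $a$;
\item purely outside triples.
\end{itemize}
The Gram determinants come from Lemma~\ref{lem:frame-geometry}. Same-frame triples have $g=1$ and all other independent triples have $g=1/2$. Pair Gram determinants are $1$ for orthogonal pairs and $3/4$ otherwise. Hence every triple coefficient lies in $[0,1]$.

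\emph{Coefficient of $9Q$.} Lemma~\ref{lem:linear-core-frame} gives $9q_1$ as
\[
 9Ruv+\tfrac92\bigl(R(u+v)-uv\bigr)s
\]
plus nonnegative brackets, after using $N_{\rm par}\le RN_{{\rm diag},3}$. It suffices to dominate each monomial class of $d_1$ by the corresponding class in $9q_1$, using $x_k\le R$ wherever a factor $R$ is missing.

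\emph{Triples containing $a$.} These contribute
\[
 R\Bigl(uv+\tfrac12(u+v)s+\tfrac12\sum_{\text{outside pairs}}g\,x_bx_c\Bigr).
\]
The first two pieces are absorbed by $9Ruv$ and $\tfrac92R(u+v)s$, with room to spare. For the outside pairs, the triple $\{a,b,c\}$ has $g=1/2$ on cube edges and same-parity diagonals, and $g=0$ on antipodal pairs. So these terms equal
\[
 \tfrac R2\bigl(N_{\rm edge,\cdot}+N_{\rm diag,\cdot}\bigr)
\]
summed over directions. They are dominated by $\tfrac92RN_{{\rm diag},3}$, $\tfrac94(2R-v)N_{{\rm edge},1}$ and the analogous terms. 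The edge and diagonal terms in directions $1,2$ are the delicate ones, since their coefficients in $9q_1$ are $\tfrac92(R-u)$ and $\tfrac94(2R-v)$, which can degenerate when $u=R$ or $v=R$.

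\emph{Triples without $a$.} Each such triple $x_J$, with $J\subset\{e_1,e_2\}\cup\text{outside}$, carries $g_J\le1$ in $d_1$. In $9q_1$ the only available positive terms are $-\tfrac92uvs$ (negative), $\tfrac92R(u+v)s$, $9Ruv$ and the brackets. So the plan is to use $R\ge x_k$ to convert the brackets into these monomials. For example, $u\,x_bx_c\le R\,x_bx_c$ pairs the $\{e_1,b,c\}$ triple with an $R N$-type term, and $uv\,x_b\le R u x_b$ pairs it with $\tfrac92R(u+v)s$ after subtracting the $\tfrac92uvs$ deficit. Purely outside triples $x_J\le R\,x_bx_c$ are charged to pair terms of $R$.

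\emph{Reduction and main obstacle.} To make this rigorous I would normalize $R=1$. The quantity is multiaffine in $u,v$ and the outside weights (degree one in each, as noted before the interpolation identity), so it reduces to binary weights. Then $p_{{\rm cmp},1}$ becomes a counting inequality on the sign cube with $n\in\{0,1,2\}$ active companions, checked with the incidence counts $n_+,n_-$ and $N_{\rm star}\le N_{\rm par}\le N_{\rm diag}$ from Lemma~\ref{lem:cube-marginals}. The hard case is $n=2$, where $9q_1$ has coefficient $\tfrac94$ on edges in directions $1,2$ and $\tfrac92$ on diagonals. There the outside triples of $d_1$, up to $\binom{n_+}3+\binom{n_-}3$ plus mixed triples of weight $1/2$, must be beaten by $9+\tfrac92s+\tfrac94N_{\rm edge}+\tfrac92(N_{\rm diag}-N_{\rm par})$. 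I would check this by the $(n_+,n_-)$ enumeration up to $s=6$. I expect the fully occupied array to be tightest, and I would first try it via the $e_3$ identity in \eqref{eq:dense-frame-spectral-identities}, which gives $d_1$ exactly there.
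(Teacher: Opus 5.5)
Your reduction is sound: $p_{{\rm cmp},1}=9q_1-d_1$ with $d_1=e_3(H_0)=\sum_{|J|=3}g_Jx_J$; multiaffinity reduces to $R=1$ and binary weights; and the split by $n=u+v$, together with the dense identity \eqref{eq:dense-frame-spectral-identities}, is how the paper proceeds too. The gap is that the inequality itself is never established, and the class-by-class domination you describe does not hold.

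Grade terms by degree in the outside weights and take $R=u=v=1$:
\begin{itemize}
\item $9q_1$ has no positive term for antipodal pairs, for vertical edges, or for same-parity pairs with unchanged coordinate $1$ or $2$. The coefficient that vanishes for vertical edges is $\frac94(2R-u-v)$, not $\frac94(2R-v)$.
\item Its horizontal-diagonal bracket $RN_{{\rm diag},3}-N_{\rm par}$ vanishes on a parity triple, and its only term cubic in outside weights is $-\frac92N_{\rm par}$.
\item By contrast, $d_1$ assigns $\frac32$ to each vertical edge and each same-parity pair, and $1$ to each horizontal edge and each antipodal pair. This comes through $\{a,b,c\}$ and $\{e_i,b,c\}$, where $g_{\{e_i,b,c\}}=\frac12$ unless $b-c=\pm e_i$. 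It also carries $N_{\rm par}+\frac12N_{\rm mix}$ from outside triples.
\end{itemize}
All of this must be paid from the surplus $8+3s$ in degrees $0$ and $1$. That is a global count with configuration-dependent multiplicities, and the proposal gives no charging scheme for it. For instance, in the full type II array with $a=a_1$ one has $9q_1=\frac{189}{4}$ and $d_1=40$. The pair and triple terms of $d_1$ total $30$, against only $\frac{45}{4}$ from the edge, diagonal and parity terms of $9q_1$.

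The fallback enumeration is not carried out, and as set up it would not work.
\begin{itemize}
\item $q_1$ and $d_1$ are not functions of the horizontal face sizes $(n_+,n_-)$. They depend on parity-class sizes, edge directions, antipodal pairs, and where the two missing cube vertices sit. Depending on type and selected ray, these are adjacent in a companion direction, antipodal, or adjacent in the omitted direction. The full array then gives $9q_1-d_1=\frac{29}{4}$, $5$ and $\frac92$, respectively.
\item The weight-one outside triples are the parity (same-frame) triples, not the $\binom{n_+}{3}+\binom{n_-}{3}$ horizontal-face triples. A full parity class alone has $N_{\rm par}=1$ but $N_{\rm tri}=0$.
\item Your $n=2$ target drops the triples of $d_1$ through $a$ or a companion, which supply $29$ of the $40$ above.
\end{itemize}

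What makes the case analysis tractable in the paper is bounding $d_1$ for $n\ge1$ without bookkeeping, by Maclaurin: $e_3(H_0)\le(\operatorname{tr}H_0)^3/16=(1+n+s)^3/16$. This is paired with lower bounds for $q_1$ depending only on $s$, namely $q_1\ge uv+\frac12\{s+(s-4)_+\}$. For $n=1$ this comes from the perfect matching of same-parity pairs fixing the inactive companion coordinate; for $n=2$ it comes from \eqref{eq:cube-marginal-pair-bound}. Concavity in $s$ then leaves a few endpoint checks, and the full case $n=2$, $s=6$ follows from $e_3(H_0)\le\frac{81}{2}$ and $q_1\ge5$. For $n=0$ Maclaurin is too weak: at $s=1$ it gives only $p_{{\rm cmp},1}\ge-\frac12$. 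There the paper counts exactly, $p_{{\rm cmp},1}=4(N_{\rm same}+N^{a}_{\rm cross})-\frac{11}{2}N_{\rm par}-\frac12N_{\rm mix}$, and closes with $N_{\rm par}\le N_{\rm same}/3$ and $N_{\rm mix}\le3N_{\rm same}$. Your proposal still needs either these uniform bounds or a genuinely complete enumeration over both types, all selected-ray orbits, and all occupancy patterns.
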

\begin{proof}
For $R>0$, normalize $R=1$ and reduce to binary weights. The case $R=0$ is immediate.
Let $n=u+v$ count active companions, $s$ active outside vertices, and $H_0=\sum_i x_i{\Pi_{a_i}}$.
Then ${p_{{\rm cmp},1}}=9q_1-e_3(H_0)$, and Maclaurin's inequality gives
\begin{equation*}
 e_3(H_0)\le\frac{(1+n+s)^3}{16}.
\end{equation*}

If $n=0$, let ${N_{\rm same}}$ count same-parity outside pairs, ${N_{\rm cross}^{a}}$ count cross-parity pairs independent with $a$, and ${N_{\rm mix}}$ count mixed outside triples.
The frame identity and the Gram determinants give
\[
 q_1=\frac{{N_{\rm same}}+{N_{\rm cross}^{a}}-{N_{\rm par}}}{2}\quad\text{and}\quad
 e_3(H_0)=\frac{{N_{\rm same}}+{N_{\rm cross}^{a}}+{N_{\rm mix}}}{2}+{N_{\rm par}}.
\]
Each parity class has at most three vertices, so ${N_{\rm par}}\le {N_{\rm same}}/3$ and ${N_{\rm mix}}\le3{N_{\rm same}}$.
Hence
\[
 {p_{{\rm cmp},1}}=4({N_{\rm same}}+{N_{\rm cross}^{a}})-\frac{11}{2}{N_{\rm par}}-\frac12{N_{\rm mix}}
 \ge\frac23{N_{\rm same}}+4{N_{\rm cross}^{a}}\ge0.
\]

For $n=1,2$, matching and cube marginal bounds, respectively, combined with the Maclaurin bound for $e_3(H_0)$, give
\[
 q_1\ge uv+\frac{s+(s-4)_+}{2}\quad\text{and}\quad
 p_{{\rm cmp},1}\ge9uv+\frac92\{s+(s-4)_+\}
                         -\frac{(1+n+s)^3}{16}.
\]
Indeed, for $n=1$, same-parity pairs whose unchanged coordinate is the inactive companion form a perfect matching, with at least $(s-4)_+$ occupied edges.
For $n=2$, substitute the weighted pair bound~\eqref{eq:cube-marginal-pair-bound} into the formula for $q_1$.
For $n=1$ the second bound is concave on $[1,4]$ and $[4,6]$, with positive endpoint values $45/16,9/2,4$. At $s=0$, $q_1=e_3(H_0)=0$.
For $n=2$ and $0\le s\le4$, concavity gives the endpoint minimum $\min\{117/16,89/16\}=89/16>0$.
At $s=5$, the same bounds give $q_1\ge4$ and $e_3(H_0)\le32$, hence ${p_{{\rm cmp},1}}\ge4>0$.
At $s=6$, $H_0=3\Id-{M_{\rm omit}}$. The dense-frame identity~\eqref{eq:dense-frame-spectral-identities} and the cube marginal bound give $e_3(H_0)\le81/2$ and $q_1\ge5$, hence ${p_{{\rm cmp},1}}\ge9/2>0$.
Interpolation and homogeneity prove the result.
\end{proof}

\begin{lemma}\label{lem:common-core-endpoint}
At binary retained weights with $R=1$, one has $9{Q}(2/3)>{D_H}(2/3)$.
\end{lemma}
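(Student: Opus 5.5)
We prove $9Q(2/3)>D_H(2/3)$ by expanding both sides as cubic polynomials in $t$ and evaluating them at $t=2/3$. Write
\[
 Q=q_0+tq_1+t^2q_2+t^3q_3,\qquad D_H=\sum_{J}t^{4-|J|}g_Jx_J ,
\]
where $D_H$ is given by the Cauchy--Binet coefficient formula. After normalizing $R=1$ with binary weights, the configuration is described by the number $n\in\{0,1,2\}$ of active companions and by the active outside vertex set of the sign cube, with $s$ vertices. The coefficients $q_3=1-n/3$ and $q_2$ come from~\eqref{eq:common-high-core-coefficients}. The coefficient $q_1$ comes from Lemma~\ref{lem:linear-core-frame}, and $q_0$ from Lemmas~\ref{lem:full-companion-cube} and~\ref{lem:cube-constant-bound}.

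On the determinant side we use $D_H=\sum_k t^{4-k}e_k(H_0)$ with $H_0=\sum_i x_i\Pi_{a_i}$. Here $e_1(H_0)=m=1+n+s$. We bound $e_2,e_3,e_4$ by Maclaurin, $e_k\le\binom4k(m/4)^k$, or by counting as in Proposition~\ref{prop:sharp-zero-core-exchange}. The comparison then splits into three pieces.
\begin{itemize}
\item The constant term: Proposition~\ref{prop:sharp-zero-core-exchange} already gives $9q_0\ge(10/9)e_4(H_0)$.
\item The linear term: Lemma~\ref{lem:uniform-linear-core} gives $9q_1\ge e_3(H_0)$.
\item The top coefficients: it remains to show
\[
 9\bigl(t^2q_2+t^3q_3\bigr)+\frac{e_4(H_0)}{9}>t^4+t^3m+t^2e_2(H_0)\quad\text{at }t=2/3,
\]
or to use some slack borrowed from the lower-order comparisons.
\end{itemize}

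For the top coefficients we substitute $t=2/3$ and compute $e_2(H_0)$ exactly. Distinct frames have squared inner products $1/4$, so
\[
 e_2(H_0)=\sum_j\binom{k_j}{2}+\frac34\sum_{j<j'}k_jk_{j'},
\]
where $k_j$ are the occupied sizes of the three frames. For example, $k=(1+n,\ \lceil s/2\rceil,\ \lfloor s/2\rfloor)$ for a balanced split, and in general the two outside parity classes give $k_2+k_3=s$. By \eqref{eq:common-high-core-coefficients},
\[
 q_2=n-\1_{n=2}+\frac{3-n}{4}s .
\]
The inequality then becomes a finite table over $n\in\{0,1,2\}$, $0\le s\le6$, and the class split $k_2+k_3=s$. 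Each entry reduces to checking an explicit rational number.

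The main obstacle is the dense regime $n=2$, $s\ge5$. There $q_3=1/3$ is small, while $e_2(H_0)$ and $m$ are large, so the crude reduction may fail. In that regime we expect to need the slack from the sharper lower-order bounds rather than the bare termwise comparison. At $s=6$ the array is full, $H_0=3\Id-M_{\rm omit}$, and the dense-frame identity~\eqref{eq:dense-frame-spectral-identities} gives
\[
 D_H(t)=\det\bigl((t+3)\Id-M_{\rm omit}\bigr)=(t+3)^2(t+3/2)^2-(t+3)g_{\rm omit}\le(t+3)^2(t+3/2)^2 .
\]
At $s=5$ we use $q_0\ge2$, $q_1\ge4$ and the Maclaurin bound $e_3\le32$ from Lemma~\ref{lem:uniform-linear-core}. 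In both cases we then compare against $9Q(2/3)$ computed from the exact coefficients $q_0\ge5/2$ and $q_1\ge5$ in Lemma~\ref{lem:cube-marginals}.

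If a case is still tight, we fall back on computing $Q$ exactly for that configuration. The configurations are finite up to the symmetry orbits of Lemma~\ref{lem:frame-geometry}, and the rooted exchange formula~\eqref{eq:weighted-rooted-exchange} together with the cube incidences gives $Q$ directly.
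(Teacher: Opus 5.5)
Your route works and is genuinely different from the paper's. You have not actually run the finite table, though, and you misplace where the bare top-coefficient comparison fails.

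\textbf{Where the bare comparison fails.} Write
\[
9Q-D_H=p_{{\rm cmp},0}+tp_{{\rm cmp},1}+t^2p_{{\rm cmp},2}+t^3p_{{\rm cmp},3}-t^4,
\]
where $p_{{\rm cmp},3}=8-4n-s$ and $p_{{\rm cmp},2}=8n-10uv+(6-3n)s-N_{\rm same}-\frac34N_{\rm cross}$. The inequality you isolate already fails at $(n,s)=(2,4)$, not only for $s\ge5$.
\begin{itemize}
\item With parity split $(3,1)$, one gets $\frac49p_{{\rm cmp},2}+\frac8{27}p_{{\rm cmp},3}-\frac{16}{81}=-\frac{85}{81}$.
\item The extra term available is only $e_4/9\le35/36$, since $e_4$ is a quarter of the number of bases among seven rays.
\end{itemize}
So slack must also be borrowed at $(2,4)$. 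There it succeeds:
\begin{itemize}
\item $q_0\ge5/4$, $q_1\ge3$, $q_2=2$ and $q_3=1/3$ give $9Q(2/3)\ge1373/36$.
\item $e_4\le35/4$, $e_3\le343/16$, $e_2\le69/4$ and $e_1=7$ give $D_H(2/3)<33$.
\end{itemize}

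\textbf{A mix-up at $s=5$.} The constants $q_0\ge5/2$ and $q_1\ge5$ you quote belong to $s=6$. The correct ones at $s=5$ are $q_0\ge2$ and $q_1\ge4$. These still suffice: $9Q(2/3)\ge467/9$, while $e_4\le16$, $e_3\le32$, $e_2=91/4$ and $e_1=8$ give $D_H(2/3)\le4051/81$.

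\textbf{Where the termwise check is fine.} For $n\le1$, and for $n=2$ with $s\le3$, the termwise check goes through for every split. At $(2,6)$ your dense-frame argument is exactly the paper's.

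\textbf{Comparison with the paper.} The paper never expands $D_H$ in $t$. It applies Hadamard's inequality to the diagonal $(1+h,u+h,v+h,h)$, with $h=2/3+s/4$, to get $D_H(2/3)\le(1+h)^{n+1}h^{3-n}$. It pairs this with lower bounds for $9Q(2/3)$, one for each $n$, that are affine in $s$:
\begin{itemize}
\item $q_0\ge0$ and $2q_1\ge s(s-2)/3$ for $n=0$;
\item matching bounds for $n=1$;
\item the $q_{\rm corr}$ estimate for $n=2$.
\end{itemize}
Concavity of lower minus upper between breakpoints then reduces everything to a handful of endpoint values.

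Your decomposition reuses Proposition~\ref{prop:sharp-zero-core-exchange} and Lemma~\ref{lem:uniform-linear-core} for the two lowest coefficients and computes $e_1,e_2$ exactly. That is clean for $n\le1$ and sparse $n=2$, but it needs checks split by split and ad hoc slack in the dense $n=2$ cases. The Hadamard bound is cruder but uniform in the split, and needs neither of those earlier results.
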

\begin{proof}
Write $n=u+v\in\{0,1,2\}$ for the active companion count, $s$ for the outside count, and $h=2/3+s/4$.
The diagonal of $H$ is $(1+h,u+h,v+h,h)$, so Hadamard's inequality gives
\[
 {D_H}(2/3)\le {\operatorname{upper}_n(s)}:=(1+h)^{n+1}h^{3-n}.
\]
The coefficient and cube bounds give
\[
 9Q(2/3)\ge\operatorname{lower}_n(s):=
 \begin{cases}
  \frac83+s+s^2,&n=0,\\
  \frac{52}{9}+5s+\frac{15}{2}(s-4)_+,&n=1,\\
  \frac{98}{9}+\frac{25s}{4}+\frac{15}{4}(s-3)_+,&n=2.
 \end{cases}
\]
For $n=0$, use the preceding pair counts $N_{\rm same},N_{\rm cross}^{a}$ and parity sizes $n_{{\rm par},\pm}\le3$.
Dependent cross-parity pairs form a matching, so $N_{\rm cross}^{a}\ge n_{{\rm par},+}n_{{\rm par},-} -\min(n_{{\rm par},+},n_{{\rm par},-})$.
Since $N_{\rm par}\le N_{\rm same}/3$,
\[
 2q_1={N_{\rm same}}-{N_{\rm par}}+{N_{\rm cross}^{a}}
 \ge\frac23{N_{\rm same}}+{n_{{\rm par},+}}{n_{{\rm par},-}}-\min({n_{{\rm par},+}},{n_{{\rm par},-}})\ge\frac{s(s-2)}3.
\]
For the last inequality, assume ${n_{{\rm par},+}}\ge {n_{{\rm par},-}}$. Three times the difference is ${n_{{\rm par},+}}{n_{{\rm par},-}}+{n_{{\rm par},+}}-2{n_{{\rm par},-}}\ge0$ for these nonnegative integers.
Together with $q_0\ge0$, $q_2=3s/4$, and $q_3=1$, this proves the $n=0$ bound.
For $n=1$, use $q_0\ge(s-4)_+/2$, the matching bound $q_1\ge\{s+(s-4)_+\}/2$ from the proof of Lemma~\ref{lem:uniform-linear-core}, and $q_2=1+s/2$, $q_3=2/3$.

For $n=2$, the cube identities with the large-core correction $q_{\rm corr}$ give
\[
 Q(2/3)=\frac{98}{81}+\frac{25s}{36}+q_{\rm corr},\quad
 \text{with}\quad q_{\rm corr} \ge\max\{0,(5s-12)/12\}\ge\frac5{12}(s-3)_+.
\]

For $n=0$, set $z=s/4\in[0,3/2]$. Expansion gives
\[
 {\operatorname{lower}_0}(s)-{\operatorname{upper}_0(s)}
 =\frac{176}{81}+\frac{40z}{27}
       +\left(\frac{34}{3}-\frac{11z}{3}-z^2\right)z^2>0,
\]
because the bracket is at least $43/12$ on this interval.
For $n=1,2$, convexity of ${\operatorname{upper}_n}$ and piecewise affinity of ${\operatorname{lower}_n}$ make ${\operatorname{lower}_n}-{\operatorname{upper}_n}$ concave between breakpoints.
Its values exceed $4,6,3$ at $s=0,4,6$ for $n=1$, and $7,9,2$ at $s=0,3,5$ for $n=2$.
This proves the comparison on $[0,6]$ and $[0,5]$, respectively.

At the remaining integral value $n=2,s=6$, Lemma~\ref{lem:cube-marginals} gives $q_0\ge5/2$ and $q_1\ge5$, hence ${q_{\rm corr}}\ge5/3$ and $9{Q}(2/3)\ge1141/18$.
The dense-frame identity \eqref{eq:dense-frame-spectral-identities}, with $c=11/3$, gives
\[
 {D_H}(2/3)\le c^2(c-3/2)^2=\frac{20449}{324}.
\]
The difference is at least $89/324>0$.
\end{proof}

\begin{lemma}\label{lem:maxray-low-core}
If $R>0$ is a largest retained weight and $0\le t\le2R/3$, then $9{Q}\ge{D_H}$.
\end{lemma}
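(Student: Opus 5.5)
We regard $p_{\rm cmp}(t)=9Q(t)-D_H(t)$ as a polynomial in the core weight $t$ on $[0,2R/3]$, with the retained weights fixed. Lemma~\ref{lem:maxray-low-core} then follows from an interpolation between its two endpoints. By homogeneity we normalize $R=1$, so the interval is $[0,2/3]$.

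\textbf{Reduction to binary weights.} The paper already records that $9Q-D_H$ is affine separately in every retained weight. Hence, for each fixed $t$, the vertex interpolation identity reduces the inequality on the cube $[0,1]^8$ to binary retained weights. The reduction is pointwise in $t$, so it commutes with the argument below, and it suffices to treat binary configurations.

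\textbf{Endpoint values.} For binary weights we already have:
\begin{itemize}
\item at $t=0$, Proposition~\ref{prop:sharp-zero-core-exchange} gives $p_{\rm cmp}(0)\ge0$;
\item at $t=2/3$, Lemma~\ref{lem:common-core-endpoint} gives $p_{\rm cmp}(2/3)>0$;
\item Lemma~\ref{lem:uniform-linear-core} gives $p_{\rm cmp}'(0)=p_{{\rm cmp},1}\ge0$.
\end{itemize}

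\textbf{Interior of the interval.} Write $p_{\rm cmp}(t)=\sum_{k=0}^4 c_kt^k$. The quartic coefficient is $c_4=9\cdot 0-1=-1$, because the fourth-degree core terms of $Q$ cancel. The cubic and quadratic coefficients are $c_3=9q_3-e_1(H_0)$ and $c_2=9q_2-e_2(H_0)$. These are explicit through~\eqref{eq:common-high-core-coefficients} and the frame counts.

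The cleanest route is concavity. If $p_{\rm cmp}''(t)=2c_2+6c_3t-12t^2\ge0$ fails, argue as follows instead. Because $c_4<0$, the quartic $p_{\rm cmp}$ lies above the chord through its endpoint values, or above the tangent line at $0$, on any subinterval where it is concave.

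I would therefore check one of two conditions. The first is $c_2\ge0$ and $c_3\ge0$, in which case $p_{\rm cmp}''$ is concave in $t$ and changes sign at most once. The second is to split $[0,2/3]$ at the inflection point. On the convex part, $p_{\rm cmp}(0)\ge0$ together with $p_{\rm cmp}'(0)\ge0$ gives $p_{\rm cmp}\ge0$. On the concave part, the endpoint at $2/3$ together with the value at the inflection point gives positivity.

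Concretely, $p''$ is a downward parabola in $t$. If $p''(0)\ge0$, then $p$ is convex on an initial interval $[0,t_*]$ and concave on $[t_*,2/3]$. On $[0,t_*]$ we get $p\ge p(0)+p'(0)t\ge0$. On $[t_*,2/3]$, concavity gives $p\ge\min\{p(t_*),p(2/3)\}>0$ or $\ge 0$. If $p''(0)<0$ and $p''$ has two roots, the same pattern concave--convex--concave needs $p'$ control at the convex piece; I expect this case to be excluded by showing $c_2\ge0$.

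\textbf{Main obstacle.} The main step is the coefficient sign $c_2=9q_2-e_2(H_0)\ge0$ for binary weights. Here $q_2=(u+v)-uv+(3-u-v)s/4$, and $e_2(H_0)$ is bounded via the frame Gram entries: the same-frame pairs give $1$ and the distinct-frame pairs give $3/4$, as in the computation of $e_2=17$.

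I would enumerate the cases $n=u+v\in\{0,1,2\}$ against $s\le6$, using the crude bound $e_2\le\binom{m}{2}$ with corrections. If $c_2$ turns out to be negative in a dense case, such as $n=2$, $s=6$, I would fall back on the exact dense-frame identities~\eqref{eq:dense-frame-spectral-identities} with $c=3+t$. These give $D_H(t)$ exactly, so the inequality can be checked directly as an explicit one-variable polynomial on $[0,2/3]$.
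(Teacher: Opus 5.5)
Your skeleton is the paper's: normalize $R=1$, reduce to binary weights, and combine $p_{\rm cmp}(0)\ge0$, $p_{\rm cmp}(2/3)>0$ and $p_{{\rm cmp},1}\ge0$ with a sign analysis of the quartic. Your convex-then-concave argument is valid whenever $c_2\ge0$. The gap is in the step you call the main obstacle: $c_2=9q_2-e_2(H_0)\ge0$ is false for binary weights. Evaluating $e_2(H_0)$ with the Gram entries you describe gives
\[
 c_2=8n-10uv+(6-3n)s-N_{\rm same}-\tfrac34N_{\rm cross}.
\]
For $n=2$ this is $c_2=6-N_{\rm same}-\tfrac34N_{\rm cross}$, which equals $-5/2$ at $s=5$ (class sizes $3,2$) and $-27/4$ at $s=6$.

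Your fallback only targets $s=6$. Even there, the dense-frame identity gives $D_H(t)$ but not $Q(t)$, so you would still need the bounds $q_0\ge5/2$ and $q_1\ge5$ from Lemma~\ref{lem:cube-marginals}. The case $n=2$, $s=5$ is not handled at all. As written, the concave--convex--concave pattern you flagged is therefore never actually excluded.

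The missing observation is that this pattern cannot occur. One has $c_3=9q_3-e_1(H_0)=8-4n-s$, so $c_3>0$ forces $n\le1$ and $uv=0$. Then $N_{\rm same}\le s$ and $N_{\rm cross}\le3s/2$ (each parity class has at most three active vertices) give $c_2\ge8n+(31/8-3n)s\ge0$. Hence $c_2<0$ implies $c_3\le0$. In that case $p_{\rm cmp}''(t)=2c_2+6c_3t-12t^2<0$ for $t\ge0$, so $p_{\rm cmp}$ is concave and the two endpoint values suffice. When $c_2\ge0$, your convex-then-concave argument finishes, and no explicit dense computation is needed.

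This is the paper's argument in another form. It shows that the coefficient sequence $(p_{{\rm cmp},1},2p_{{\rm cmp},2},3p_{{\rm cmp},3},-4)$ of $p_{\rm cmp}'$ has at most one sign change. Descartes' rule then gives at most one positive critical point, necessarily a maximum, so the minimum on $[0,2/3]$ lies at an endpoint.
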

\begin{proof}
Normalize $R=1$ and reduce to binary weights.
For the comparison polynomial ${p_{\rm cmp}}(t)=9{Q}(t)-{D_H}(t)$, write
\[
 {p_{\rm cmp}}(t)={p_{{\rm cmp},0}}+{p_{{\rm cmp},1}}t+{p_{{\rm cmp},2}}t^2
                         +{p_{{\rm cmp},3}}t^3-t^4.
\]
Lemma~\ref{lem:uniform-linear-core} gives ${p_{{\rm cmp},1}}\ge0$.
Let $n=u+v$, with ${N_{\rm same}}$ counting same-parity outside pairs and ${N_{\rm cross}}$ counting all cross-parity pairs, including those dependent with $a$.
Cauchy--Binet and the coefficients~\eqref{eq:common-high-core-coefficients} give
\[
 {p_{{\rm cmp},3}}=8-4n-s\quad\text{and}\quad
 {p_{{\rm cmp},2}}=8n-10uv+(6-3n)s-{N_{\rm same}}-\frac34{N_{\rm cross}}.
\]
If ${p_{{\rm cmp},3}}>0$, then $n\le1$ and $uv=0$.
Since ${N_{\rm same}}\le s$ and ${N_{\rm cross}}\le3s/2$, we have ${p_{{\rm cmp},2}}\ge8n+(31/8-3n)s\ge0$.
Thus ${p_{{\rm cmp},2}}<0$ implies ${p_{{\rm cmp},3}}\le0$.
The derivative ${p_{\rm cmp}}'$ therefore has at most one coefficient sign change. Descartes' rule gives at most one positive critical point, necessarily a maximum, so the minimum is at an endpoint.
Proposition~\ref{prop:sharp-zero-core-exchange} gives ${p_{\rm cmp}}(0)\ge0$, and Lemma~\ref{lem:common-core-endpoint} gives ${p_{\rm cmp}}(2/3)>0$.
Interpolation and homogeneity finish the proof.
\end{proof}

\begin{proof}[Proof of Lemma~\ref{lem:maxray-mean-comparison}]
The assertions are Lemmas~\ref{lem:maxray-low-core} and \ref{lem:maxray-mean-packets}, respectively.
\end{proof}

\section{Sharp stability of the 24-cell}
\label{sec:local-variation}

The direct entropy estimate already controls Hausdorff distance from the regular cell.
We now derive the local quadratic cost at $D_4$.
The essential geometric step is its sharp comparison with support displacements, which determines the optimal stability coefficient and the extremal deformation.
Recall that $\operatorname{def}(P)=\mathcal I(P)/2^{11/4}-1$ is the normalized area excess and $d_{24}(P)$ compares centered unit-volume bodies modulo orthogonal maps.
We follow the three-dimensional argument of Song~\cite{Song2026Local} in separating metric and affine strain, using polar exponential coordinates, computing mixed second-order terms, and comparing these coordinates with Hausdorff distance in both directions.

\subsection{Joint second variation at \texorpdfstring{$D_4$}{D4}}

Use the cells $\mathcal C_H,V_H$ and facet weights of Lemma~\ref{lem:universal-cofactors} with $\Lambda=D_4$.
For $|\det L|=1$, write $L^{-\mathsf T}=Ue^Y$, where $U$ is orthogonal and $Y\in\Sym_0(\R^4)$.
The logarithm $Y$ records affine strain after the orthogonal factor is removed.

\begin{proposition}\label{prop:local-isoperimetric-cost}
Let $H=\Id+X>0$, with $X\in\Sym_0(\R^4)$.
Uniformly across adjacent Voronoi chambers, as $r_{\rm par}=\|X\|+\|Y\|\to0$,
\[
  \log\frac{\mathcal I(LV_H)}{2^{11/4}}
  =\frac1{20}\|X\|^2+\frac5{24}\|Y-X/10\|^2
  +O(r_{\rm par}^3).
\]
For small $X$, the minimum over $|\det L|=1$ is attained at a unique polar logarithm $Y_H$, with
\[
  \inf_{|\det L|=1}\log\frac{\mathcal I(LV_H)}{2^{11/4}}
  =\frac1{20}\|X\|^2+O(\|X\|^3)
  \quad\text{and}\quad
  Y_H=X/10+O(\|X\|^2).
\]
The optimized coefficient $1/20$ is sharp.
\end{proposition}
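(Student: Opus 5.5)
The plan is to expand each term of the exact deficit identity of Theorem~\ref{lem:balanced-entropy-bridge} to second order at $H=\Id$, $S=\Id$. For $d=4$ that identity reads
\[
 \log\frac{\mathcal I(LV_H)}{2^{11/4}}=\frac F8+\frac{\operatorname{KL}(\mu\|\nu)}4+\log\frac{\tr T}4+\log(1+g_{\mathrm{ang}}),
\]
with the actual $D_4$ facet data of Lemma~\ref{lem:universal-cofactors}, $v_\Lambda=2$, $\ell_0=\sqrt2$. Here $S=e^Y$ and the orthogonal factor $U$ drops out.

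\textbf{Reduction to the 24 octahedral facets.} First I would show that only old facets matter to quadratic order, uniformly across chambers. At $H=\Id$ only the 24 octahedral facets have positive area. In any adjacent chamber a newly created facet has cofactor weight cubic in the perturbation, like $w_{m_j}=x_{T_j}$ in Lemma~\ref{lem:facet-cofactors}. It therefore enters $E$, $\theta$, $\widehat W_\kappa$ and $g_{\mathrm{ang}}$ only at $O(r_{\rm par}^3)$. On old normals $\kappa_e=1$ and $w_e=1/3+O(\|X\|)$.

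I would never differentiate individual facet areas. The twelve old projectors are linearly dependent in the 10-dimensional space of symmetric matrices, so balance does not fix the first-order weight changes $\delta w_e$ individually. Every quantity I need, however, is a projector sum: $W_\kappa$ contains $\sum_e\delta w_e\Pi_e=\delta\operatorname{adj}H=-X$. Alternatively a term involves $\delta w_e$ only multiplied by a first-order quantity, where a second-order moment identity from \eqref{eq:D4-root-moments} applies.

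\textbf{Term-by-term expansion.}
\begin{itemize}
\item Compensated entropy: $\operatorname{ent}_e(\Id+X)=\tfrac12\operatorname{var}_e(X)+O(\|X\|^3)$. With $w_e=1/3$ and $\sum_e\operatorname{var}_e(X)=2\|X\|^2$ this gives $E=\|X\|^2/3+O(\|X\|^3)$. The mark term $\theta\log2$ is cubic.
\item Relative entropy: with $\kappa_e=1$, $\mu$ and $\nu$ differ by the factor $\prod_{e\in B}h_e^{-1/2}$, normalized. Hence $\operatorname{KL}(\mu\|\nu)=\tfrac18\operatorname{Var}_{\mu}\bigl(\sum_{e\in B}e^{\mathsf T}Xe\bigr)+O(\|X\|^3)$. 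The variance is taken under the uniform-like basis law of the regular cell. I would evaluate it using the incidence counts of Lemma~\ref{lem:frame-geometry}, reduced by symmetry to a multiple of $\|X\|^2$.
\item Trace term: $\log(\tr T/4)=\tfrac18\|\log T\|^2+O(\cdot^3)$ with $\log T=Y+\tf\log\widehat W_\kappa+O(r^2)$. To first order
\[
 W_\kappa=\operatorname{adj}H-\tfrac12\sum_e\tfrac13(e^{\mathsf T}Xe)\Pi_e=\Id-X-\tfrac16X
\]
by \eqref{eq:D4-root-moments}. Then $\widehat W_\kappa=H^{1/2}W_\kappa H^{1/2}$ gives $\tf\log\widehat W_\kappa=-X/6$, so $\log T\approx Y-X/6$.
\item Angular term: $\|Sn\|-n^{\mathsf T}Sn=\tfrac12\operatorname{var}_n(Y)+O(\|Y\|^3)$. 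Since $g_{\mathrm{ang}}$ is already quadratic, I can put $n_e=e$ and $w_e^{\mathrm{surf}}=1/3$. Then $\tr(S\widehat W_\kappa)\approx4$ and the variance identity give $g_{\mathrm{ang}}\approx\|Y\|^2/12$.
\end{itemize}

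\textbf{Assembling the quadratic form.} Adding the terms gives a quadratic form in $(X,Y)$. The $X$--$Y$ cross term comes only from the trace term. I expect the coefficients to combine into
\[
 \tfrac1{20}\|X\|^2+\tfrac5{24}\|Y-X/10\|^2 .
\]
For example, the $Y$ part is $\|Y\|^2/8+\|Y\|^2/12=\tfrac5{24}\|Y\|^2$. The cross term $-\tfrac1{24}\langle X,Y\rangle\cdot2$ then matches $-\tfrac5{24}\cdot\tfrac{2}{10}\langle X,Y\rangle$, consistent with the center $X/10$. The pure-$X$ coefficient must then be checked against $E/8$, the KL variance and $\tfrac18\|X/6\|^2$.

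The cubic remainder is uniform because all expansions are of smooth functions of $(X,Y)$ plus the $O(r^3)$ new-facet contributions. Those are uniform since finitely many chambers meet at $\Id$.

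\textbf{Minimization and sharpness.} The expansion is strictly convex in $Y$ with Hessian $\tfrac5{12}\Id$. The implicit function theorem applied to $\partial_Y$ of the exact (smooth-in-$Y$) expression then gives a unique minimizer $Y_H=X/10+O(\|X\|^2)$ and the optimized value $\tfrac1{20}\|X\|^2+O(\|X\|^3)$. Sharpness of $1/20$ follows by evaluating along $H=\Id+\varepsilon X$ with $L$ chosen by $Y=\varepsilon X/10$.

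\textbf{Main obstacle.} The hardest step is the KL variance computation, together with justifying that first-order weight changes never enter except through projector sums. I would first try to rewrite the KL term via Theorem~\ref{lem:balanced-entropy-bridge}'s identity $\log j_\kappa=F/2+\operatorname{KL}$. That lets me expand $\log\det W_\kappa$ directly, using the second-order expansion of $W_\kappa=\sum w_eh_e^{-1/2}\Pi_e$. Its second-order part again involves $\sum\delta w_e(e^{\mathsf T}Xe)\Pi_e$, which I would control by differentiating cofactor balance traced against $X$.
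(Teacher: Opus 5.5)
Your quadratic expansion is correct, and you reach it by a different route from the paper.

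\textbf{Comparison with the paper.} The paper does not use the exact deficit identity here. It Taylor-expands the surface formula \eqref{eq:universal-surface-formula} directly: it traces cofactor balance against $\Id$, $X$, $Y$, uses the root moments with $w_e=1/3$ in the quadratic terms, and uses $D_H^{-7/8}=1+\tfrac7{16}\|X\|^2+O(\|X\|^3)$. This gives $\tfrac5{96}\|X\|^2+\tfrac5{24}\|Y\|^2-\tfrac1{24}\langle X,Y\rangle$. Your four terms reproduce exactly this.
\begin{itemize}
\item Entropy: $F/8=\|X\|^2/24$.
\item Relative entropy: at $\Id$ the basis law is determinantal with kernel $\tfrac13\,e\cdot f$, so by \eqref{eq:D4-root-moments} $\operatorname{Var}_\mu(\sum_{e\in B}e^{\mathsf T}Xe)=\tfrac13\sum_e(e^{\mathsf T}Xe)^2-\tfrac19\|\sum_e(e^{\mathsf T}Xe)\Pi_e\|^2=\tfrac29\|X\|^2$. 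Hence $\operatorname{KL}/4=\|X\|^2/144$. Your alternative $\log\det W_\kappa$ route, using $\sum_e(w_e-\tfrac13)e^{\mathsf T}Xe=-\|X\|^2+O(\|X\|^3)$, gives $\log j_\kappa=\tfrac7{36}\|X\|^2$ and confirms this.
\item Trace and angular terms: $\tfrac18\|Y-X/6\|^2$ and $\tfrac1{12}\|Y\|^2$.
\end{itemize}
The $\|X\|^2$ coefficients sum to $\tfrac1{24}+\tfrac1{144}+\tfrac1{288}=\tfrac5{96}$. The cross term is $-\tfrac1{24}\langle X,Y\rangle$; your ``$\cdot2$'' is a slip. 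The paper's route is shorter and needs no variance computation. Yours explains the answer: the optimal strain $X/10$ is the compromise between the trace term, which prefers the strain $X/6$ of the inverse facet matrix, and the angular term, which prefers $Y=0$.

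\textbf{Gap in the minimization step.} The proposition asserts that the infimum over all $|\det L|=1$ is attained and has a unique minimizer. Your expansion holds only for small $r_{\rm par}$. An implicit-function argument near $Y=0$ therefore yields only a local critical point; it rules out neither competitors with large $Y$ nor an unattained infimum. Moreover, the weights are only piecewise polynomial in $X$ across chambers, so a joint implicit function theorem in $(X,Y)$ is not literally available. You must argue at fixed $X$, via strong convexity in $Y$ and $\nabla_Y=\tfrac5{12}(Y-X/10)+O(r_{\rm par}^2)$. The paper supplies the global input through $\operatorname{surf}(X,Y)\ge c\operatorname{tr}(e^Y)$ and uniform convergence.

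In your framework the repair is immediate.
\begin{itemize}
\item $F$ and $\operatorname{KL}$ do not depend on $Y$. So the affine remainder of Corollary~\ref{cor:exact-affine-coercivity} satisfies $r_{\rm aff}(Y)\le r_{\rm aff}(0)=O(\|X\|^2)$ whenever $\operatorname{surf}(X,Y)\le\operatorname{surf}(X,0)$.
\item That corollary then gives $\|\log T\|=O(\|X\|)$ and $\|Y\|_{\rm op}\le\|\log T\|_{\rm op}+\|\tf\log\widehat W_\kappa\|_{\rm op}=O(\|X\|)$.
\item The same two inequalities force $r_{\rm aff}\to\infty$ as $\|Y\|\to\infty$, which gives existence. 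They also confine every minimizer to a small ball.
\item On that ball, $Y$ enters only through the explicit analytic trace and angular terms. Their data $\widehat W_\kappa$, $n_e$, $w_e^{\mathrm{surf}}$ lie within $O(\|X\|)$ of the regular values. Hence the $Y$-Hessian is $\tfrac5{12}\Id+O(r_{\rm par})$, which yields uniqueness and $Y_H=X/10+O(\|X\|^2)$.
\end{itemize}

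\textbf{Uniformity across chambers.} This is a smaller point. You justify the cubic size of new-facet weights only by analogy with $w_{m_j}=x_{T_j}$. Using that formula also requires knowing that every $H$ near $\Id$ is, up to a $D_4$ symmetry, a type II or III parametrization with $\sum_ix_i=O(\|X\|)$. The paper argues directly instead. Near $\Id$ the only possible facet vectors are roots and norm-four vectors. The latter expose vertices of $C_{24}$, so their facets have diameter $O(\|X\|)$ and cofactor weight $O(\|X\|^3)$.
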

\begin{proof}
Let $\varepsilon=\|X\|$.
The inclusions $(1-\varepsilon)C_{24}\subseteq\mathcal C_H\subseteq(1+\varepsilon)C_{24}$ and the unit circumradius of $C_{24}$ bound every facet vector by $\|v\|\le2(1+\varepsilon)/(1-\varepsilon)$.
Squared norms in $D_4$ are even integers, so for small $\varepsilon$ only roots and vectors of squared norm four can occur.
All longer-vector inequalities are then strictly slack throughout $(1+\varepsilon)C_{24}$.
The vectors of squared norm four are $\pm2e_i$ and those with four coordinates $\pm1$, each exposing a vertex of $C_{24}$.
For a point $x$ of such a perturbed facet, $x/(1+\varepsilon)\in C_{24}$ has supporting slack $O(\varepsilon)$.
Its vertex representation assigns total weight $O(\varepsilon)$ away from the exposed vertex, so the facet has diameter $O(\varepsilon)$ and cofactor weight $O(\varepsilon^3)$.

For an old root facet, translation by $-Hv/2$ gives inequalities $u\cdot y\le(u^{\mathsf T}Hu-u^{\mathsf T}Hv)/2$ in $v^\perp$.
For the finite candidate set and $u\ne0,v$, their right sides are positive at $\Id$ and vary by relative $O(\varepsilon)$.
Comparison with $(1\pm C\varepsilon)$ dilates of the old octahedron gives $w_e=1/3+O(\varepsilon)$.
These estimates are uniform across chambers. Give absent facets weight zero.

Write $\operatorname{surf}(X,Y)=\mathcal I(LV_H)/2^{11/4}$.
The surface formula~\eqref{eq:universal-surface-formula} has prefactor $D_H^{-7/8}/4$ and marks $\kappa_e=1$ on roots and $\kappa_e=2$ on new normals.
Since new weights are $O(\varepsilon^3)$, replacing all attenuation factors by one changes $\operatorname{surf}$ by $O(r_{\rm par}^3)$.
For $a=e^{\mathsf T}Xe$ and $b=e^{\mathsf T}Ye$, the surface factor expands as
\[
 \|e^YH^{1/2}e\|
 =1+\frac12a+b+\frac12e^{\mathsf T}(XY+YX)e+e^{\mathsf T}Y^2e
 -\frac18a^2-\frac12ab-\frac12b^2+O(r_{\rm par}^3).
\]
Cofactor balance, traced against $\Id$, $X$, and $Y$, gives
\begin{align*}
 \sum_e w_e&=4-\|X\|^2+O(\varepsilon^3),\\
 \sum_e w_e a&=-\|X\|^2+O(\varepsilon^3),\\
 \sum_e w_e b&=-\langle X,Y\rangle+O(\varepsilon^2\|Y\|).
\end{align*}
Use $w_e^0=1/3$ in the quadratic terms, the root moments of Lemma~\ref{lem:frame-geometry}, and $D_H^{-7/8}=1+7\|X\|^2/16+O(\varepsilon^3)$ to obtain
\[
 \log\operatorname{surf}(X,Y)
 =\frac5{96}\|X\|^2+\frac5{24}\|Y\|^2
 -\frac1{24}\langle X,Y\rangle+O(r_{\rm par}^3).
\]
Completing the square proves the joint expansion.
Its first two $Y$-derivative remainders are $O(r_{\rm par}^2)$ and $O(r_{\rm par})$ because the weights are $Y$-independent, the finite surface factors are analytic, and old-weight errors multiply quadratic terms.

At $X=0$, cofactor balance gives $\operatorname{surf}(0,Y)\ge\tr(e^Y)/4\ge1$, with equality only at $Y=0$.
For small $X$, the old normal matrix $\sum_{e\in\mathcal N_{\rm old}}w_e\sqrt{h_e}\Pi_{n_e}$ tends to $\Id$, so $\|e^Yn_e\|\ge n_e^{\mathsf T}e^Yn_e$ gives the uniform bound $\operatorname{surf}(X,Y)\ge c\tr(e^Y)$.
This is coercive on $\tr Y=0$, so minima exist. Uniform convergence on bounded sets forces every minimizer to zero as $X\to0$.
There the $Y$-Hessian tends uniformly to $5\Id/12>0$ on $\Sym_0(\R^4)$, giving uniqueness.
The gradient equation first gives $Y_H=O(\varepsilon)$ and then $Y_H=X/10+O(\varepsilon^2)$. Substitution proves the optimized formula and sharpness.
\end{proof}

\subsection{Local coordinates and sharp Hausdorff stability}
\label{sec:four-dimensional-stability}

The next lemma recovers metric and affine parameters from Hausdorff proximity.
It converts the cubic error in the local expansion into a cubic Hausdorff remainder and yields the reverse bound $\operatorname{def}(P)\le Cd_{24}(P)^2$.

\begin{lemma}\label{lem:24-cell-local-converse}
Let $P_0=\widehat C_{24}$ be the centered unit-volume regular $24$-cell.
There is $C>0$ such that every centered unit-volume four-dimensional parallelohedron $P$ sufficiently close to $P_0$ admits $P=\widehat{LV_H}$ for some $H$ in a nonprincipal cone such that
\[
 \|H-\Id\|+\|Y\|\le C d_H(P,P_0),\quad
 \text{with}\quad \tr H=4,\;\; |\det L|=1,\;\; L^{-\mathsf T}=Ue^Y.
\]
\end{lemma}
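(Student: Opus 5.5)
The plan is to pass from Hausdorff proximity to combinatorial proximity first, and only then to parameters. Concretely, I would identify which cone $P$ comes from, recover the affine part $L$ from facet normals, and recover the metric $H$ from facet support data.

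\emph{Cone and coordinates.} By Theorem~\ref{thm:classification} write $P=\widehat{L'V_{H'}}$ with $H'$ in the closure of one of the three primitive cones. Theorem~\ref{thm:zonotope-gap} shows that principal-cone cells satisfy $\mathcal I>2^{11/4}\cdot(1+\delta)$ for a fixed $\delta>0$. Since $\mathcal I$ is continuous in Hausdorff distance on convex bodies, a principal cell cannot lie near $P_0$; hence $H'$ lies in a closed nonprincipal cone. In the coordinates of Lemma~\ref{lem:facet-cofactors} this gives $H'=t\Id+K$.

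\emph{Combinatorial stability.} Because $P$ is Hausdorff close to $P_0$, it has $24$ facets whose outer normals and support numbers are close to those of $P_0$, plus possibly new facets of small area. The support function is $1$-Lipschitz in $d_H$, and the $24$-cell's facets are exposed with a uniform angular margin. After an integral change of basis, these large facets are the images of the $D_4$ roots.

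\emph{The affine part $L$.} The lattice of $P$ is $\Lambda_P=L'(H')^{1/2}\Lambda$, with $\Lambda=D_4$ in dual normalization. I would reconstruct $\Lambda_P$ from the facet vectors of $P$: each pair of opposite facets gives a translation vector equal to twice the facet-to-center displacement along the tiling. These translation vectors, hence a basis of $\Lambda_P$, are $O(d_H(P,P_0))$-close to the corresponding $D_4$ vectors of $P_0$ (suitably scaled). This step is the main obstacle. The translation vectors are determined by face-to-face adjacency, not by normals alone, so one must show that the tiling vector of each large facet is a Lipschitz function of the facet geometry. My first attempt would use the centrally symmetric facet identity from Lemma~\ref{lem:universal-cofactors}: each facet is centered at half its lattice vector. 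The facet centroid is Lipschitz in $d_H$ for facets with area bounded below, since these facets are $d_H$-close as sets by the angular margin. Hence the Gram matrix $G=B_P^{\mathsf T}B_P$ of the recovered basis satisfies $\|G-G_0\|\le Cd_H$.

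\emph{The metric $H$ and the strain $Y$.} With the lattice Gram matrix in hand, I would renormalize $H'$. The relation $V_{H'}=(H')^{-1/2}\mathcal C_{H'}$ together with $L'$ leaves a gauge: any $(H,L)$ with $L(H)^{1/2}\Lambda=\Lambda_P$ up to rotation represents $P$. Choose $H$ to equal the form $H_{\mathbb Z}$ read in $B_{D_4}$ coordinates, i.e.\ $H=B_{D_4}^{-\mathsf T}GB_{D_4}^{-1}$ scaled so that $\tr H=4$. This $H$ lies in the same closed nonprincipal cone, since the Delaunay type is the one already fixed, and $\|H-\Id\|\le Cd_H$. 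Then $L$ is determined up to orthogonal factors by $P=\widehat{LV_H}$.

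To bound $Y$, compare support functions. Corollary~\ref{cor:entropy-geometric-control} gives the Minkowski representation of $\widehat{LV_H}$, and $V_H$ is within $C\|H-\Id\|$ of $C_{24}$ by Lemma~\ref{lem:facet-cofactors}. Therefore $e^{-Y}C_{24}$ is within $Cd_H$ of $U^{-1}P_0$. Because $C_{24}$ spans and has inradius $1/\sqrt2$, the support-function inequality $|h_{e^{-Y}C_{24}}-h_{C_{24}}|\le Cd_H$ forces $\|e^{-Y}-V\|\le Cd_H$ for some orthogonal $V$. Polar uniqueness for matrices near $\Id$ then gives $\|Y\|\le Cd_H$.
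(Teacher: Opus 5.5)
There is a genuine gap, in two places. First, the step that produces $H$ is wrong, because the gauge freedom you assert does not exist. Different tiles can tile with the same lattice, so $LH^{1/2}\Lambda=\Lambda_P$ does not imply $P=\widehat{LV_H}$. Once $P\propto A\,\mathcal C_{H'}$ with $\mathcal C_{H'}=tC_{24}+\frac12\sum_i x_i[-a_i,a_i]$, the segment summands pin down the metric up to scale and symmetries: $H\propto t\Id+\sum_i x_i\Pi_{a_i}$, and then $L\propto AH^{1/2}$. The Gram matrix of $\Lambda_P$ is instead $H^{1/2}L^{\mathsf T}LH^{1/2}$, which mixes metric and strain.

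Concretely, take $H_0=\Id+\epsilon\sum_i x_i\Pi_{a_i}$ with some $x_i>0$, and $L\propto H_0^{-1/2}$. The tiling lattice of $P=\widehat{LV_{H_0}}$ is then a multiple of $D_4$, so your recipe returns $H=\Id$. But $P$ has a nonzero segment summand, and no affine image of $V_{\Id}=C_{24}$ does, so no $L$ gives $P=\widehat{LV_{\Id}}$. The same example shows the lattice cannot separate $H$ from the affine strain. It also shows that the claim that your $H$ stays in the original cone ``since the Delaunay type is fixed'' is unfounded.

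Second, the quantitative content of the lemma is a bound $\|tA-T_0\|+\sum_i x_i/t\le C\,d_H(P,P_0)$ for some symmetry $T_0$ of $C_{24}$. Your facet-centroid step assumes this rather than proves it. Hausdorff proximity and the angular margin show only that a large facet of $P$ has nearly the right normal and lies near the matching facet of $P_0$; they do not give the reverse inclusion. In this family the facet of $\mathcal C_{H'}$ with normal $e$ is a translate of $tF_e+\frac12\sum_{a_i\perp e}x_i[-a_i,a_i]$, where $F_e$ is the facet of $C_{24}$. In addition, box facets with edges $x_i$, $i\in T_j$, appear near $\pm t\,m_j$. So Lipschitz closeness of facets is essentially the estimate to be proved. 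Even matching facets requires knowing first that $tA$ is close to a symmetry; otherwise distinct normals in $\mathcal N$ may nearly coincide.

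The paper gets this qualitative closeness by compactness, using that $C_{24}$ has no segment summand. It then proves the linear bound by blow-up. A normalized limit $(B,z)$ would satisfy $u\cdot Bv+\frac12\sum_i z_i|a_i\cdot u|=0$ on every open vertex normal cone $N_v^\circ$. Since $m_j\in N_{m_j}^\circ$ is perpendicular to each retained $a_i$ of its frame, $|a_i\cdot u|$ has a kink across $a_i^\perp$ that no linear term can produce. This forces $z=0$ and then $B=0$, a contradiction. Only after this does $H\propto\Id+\sum_i y_i\Pi_{a_i}$, $L\propto T(\Id+\sum_i y_i\Pi_{a_i})^{1/2}$ give the required representation.

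Two parts of your proposal are sound. Excluding the principal cone via the uniform gap of Theorem~\ref{thm:zonotope-gap} and continuity of $\mathcal I$ is a valid alternative to the paper's segment-summand argument. Your final polar-decomposition bound on $Y$ would also work once the correct $H$ is used.
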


\begin{proof}
The cell $C_{24}$ has no nonzero segment summand.
Exposing a vertex of the projection of $Q+[-z,z]$, $z\ne0$, to $z^\perp$ gives an edge parallel to $z$.
Every two-face containing that edge has $[-z,z]$ as a summand and hence two parallel edges, contradicting the triangular two-faces of $C_{24}$.

Let $P_j\to P_0$ and set $\widetilde P_j=2^{1/4}P_j\to C_{24}$.
Classification allows a fixed cone type along a subsequence.
The principal type is impossible: its at most ten centered segment summands lie in $\widetilde P_j$, so their bounded generators would converge to a zonotope representation of $C_{24}$.

In either nonprincipal type, Lemma~\ref{lem:facet-cofactors} gives
\[
  \widetilde P_j
  =A_j\left(t_jC_{24}+\frac12\sum_{i=1}^9x_{ji}[-a_i,a_i]\right),
  \quad\text{with}\quad t_j,x_{ji}\ge0,
\]
where $A_j$ is invertible and the ray configuration is fixed.
Let $T_j=t_jA_j$ and $z_{ji}=x_{ji}A_ja_i$.
The summands lie in $\widetilde P_j$, so $T_j,z_{ji}$ are bounded and their subsequential limits satisfy
\[
  C_{24}=TC_{24}+\frac12\sum_{i=1}^9[-z_i,z_i].
\]
The absence of segment summands forces $z_i=0$ and $TC_{24}=C_{24}$. The vertex identity $\sum_{v\in\operatorname{vert}C_{24}}vv^{\mathsf T}=6\Id$ makes $T$ orthogonal.
Thus $t_j>0$ eventually and $x_{ji}/t_j=\|z_{ji}\|/\|T_ja_i\|\to0$.
Consequently every sufficiently close $P$ satisfies $2^{1/4}P=\operatorname{cell}(T,y)$ with $T$ near a symmetry and $y$ small, where
\[
 \operatorname{cell}(T,y)=T\left(C_{24}+\frac12\sum_i y_i[-a_i,a_i]\right),
 \quad\text{with}\quad y_i\ge0.
\]
We claim, near $(\Id,0)$,
\begin{equation}\label{eq:24-cell-inverse-support}
 \|T-\Id\|+\sum_i y_i
 \le C d_H\bigl(\operatorname{cell}(T,y),C_{24}\bigr).
\end{equation}
Otherwise, some $T_j\to \Id$, $y_j\to0$ satisfy
\[
 \varepsilon_j=\|T_j-\Id\|+\sum_i y_{ji}>0
 \quad\text{and}\quad d_H\bigl(\operatorname{cell}(T_j,y_j),C_{24}\bigr)=o(\varepsilon_j).
\]
Normalize $(T_j-\Id,y_j)$ by $\varepsilon_j$ and pass to a limit $(B,z)$, with $z_i\ge0$ and $\|B\|+\sum_i z_i=1$.
For fixed $u$ in the interior $N_v^\circ$ of a vertex normal cone, eventually $T_j^{\mathsf T}u\in N_v^\circ$, so the normalized support identity gives
\begin{equation*}
 u\cdot Bv+\frac12\sum_i z_i|a_i\cdot u|=0,
 \quad\text{with}\quad  u\in N_v^\circ.
\end{equation*}
For each retained $a_i$, its omitted frame vector $m_k$ lies in $N_{m_k}^\circ$ and is perpendicular to $a_i$.
Along $u=m_k+sa_i$, the $i$th term has derivative jump $z_i$ at zero, all other absolute-value terms have nonnegative jumps, and the linear term has none.
Hence $z=0$, and then $Bv=0$ for every vertex, contradicting normalization.

Left multiplication by the nearby symmetry $T_0^{-1}$ gives the same estimate, uniformly over the finitely many symmetries and two configurations.
Set
\[
 H^0=\Id+\sum_i y_i\Pi_{a_i},\quad
 H=\frac{4H^0}{\tr H^0},\quad
 L=\frac{T(H^0)^{1/2}}{|\det(T(H^0)^{1/2})|^{1/4}}.
\]
The Minkowski formula gives $P=\widehat{LV_H}$, and smoothness of the polar logarithm bounds $\|H-\Id\|+\|Y\|$ by $C(\|T-T_0\|+\sum_i y_i)$, as required.
\end{proof}

The metric coefficient $1/20$ can be converted into an exact intrinsic coefficient.
To picture its extremal deformation, sweep the regular cell along three perpendicular segments, with lengths in the ratios $6:5:5$, and leave the fourth direction unswept.
These Minkowski additions replace the two vertices on the fourth axis by facets shaped as small rectangular boxes.
Metric conversion is followed by an affine strain that contracts the distinguished direction and expands the other three, and then by volume normalization.
The inward retreat along the direction of the longest sweep determines the leading Hausdorff displacement.

\begin{theorem}
\label{thm:sharp-Hausdorff-coefficient}
There are constants $C,r>0$ such that every four-dimensional parallelohedron with $d_{24}(P)<r$ satisfies
\begin{equation}\label{eq:sharp-Hausdorff-cubic}
 \operatorname{def}(P)\ge
 \frac{\sqrt2}{8}d_{24}(P)^2-Cd_{24}(P)^3.
\end{equation}
The coefficient is optimal:
\begin{equation*}
 \lim_{r\downarrow0}\inf_{0<d_{24}(P)\le r}
 \frac{\operatorname{def}(P)}{d_{24}(P)^2}
 =\frac{\sqrt2}{8}.
\end{equation*}
An extremal family is
\[
 K_{\rm ext} =\operatorname{diag}(12,10,10,0),\quad
 Y_{\rm ext} =\operatorname{diag}(4,-1,-1,-2),\quad
 P_\tau=\widehat{e^{-\tau Y_{\rm ext}}V_{\Id+\tau K_{\rm ext}}},
\]
with $\tau>0$.
For $0<r_P:=d_{24}(P)<r$, define
\[
 \varepsilon(P)=\max\left\{
 \frac{\operatorname{def}(P)}{r_P^2}-\frac{\sqrt2}{8},0\right\}
 \quad\text{and}\quad \tau=\frac{r_P}{8\cdot 2^{-1/4}}.
\]
Then, after increasing $C$ if necessary,
\begin{equation*}
 \inf_{U\in \mathrm{O}(4)}d_H(\widehat P,UP_\tau)
 \le C r_P\sqrt{\varepsilon(P)+r_P}.
\end{equation*}
Hence every sequence with $d_{24}(P_j)\to0$ and $\operatorname{def}(P_j)/d_{24}(P_j)^2\to\sqrt2/8$ lies within $o(d_{24}(P_j))$ of this family, up to rotation.
\end{theorem}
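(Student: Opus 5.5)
The plan is to reduce the statement to a finite-dimensional comparison between two functions of the local parameters: the quadratic cost of Proposition~\ref{prop:local-isoperimetric-cost} and the first-order support displacement of the cell. First I pass to local coordinates. For $d_{24}(P)<r$, Lemma~\ref{lem:24-cell-local-converse} writes $\widehat P=\widehat{LV_H}$ with $H$ in a nonprincipal cone, $\tr H=4$, $L^{-\mathsf T}=Ue^Y$ and $\|H-\Id\|+\|Y\|\le Cd_{24}(P)$. Write $H=t\Id+K$ with $K=\sum_i x_i\Pi_{a_i}$ and $X=\tf K$. Proposition~\ref{prop:local-isoperimetric-cost} then gives
\[
\operatorname{def}(P)=\frac1{20}\|X\|^2+\frac5{24}\|Y-X/10\|^2+O(d_{24}(P)^3),
\]
since $\operatorname{def}=e^{\log\operatorname{surf}}-1$ differs from $\log\operatorname{surf}$ only at the order of the square of the quadratic term. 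By the same lemma, $d_{24}(P)$ equals, up to $O(d_{24}^2)$, the first-order Hausdorff displacement $\delta(y,Y)$ modulo infinitesimal rotations.

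Next I compute the displacement explicitly. From the Minkowski formula in the proof of Corollary~\ref{cor:entropy-geometric-control}, $\widehat{LV_H}=U\,2^{-1/4}D_{H_0}^{-1/8}e^{-Y}H_0^{-1/2}\bigl(C_{24}+\tfrac12\sum_i y_i[-a_i,a_i]\bigr)$ with $y_i=x_i/t$. For a unit $u$ in the normal cone $N_v$ of a vertex $v$, the linearized support displacement is
\[
\delta h(u)=2^{-1/4}\Bigl(-u\cdot\bigl(Y+\tfrac12X+\tfrac18\tr(K/t)\bigr)v+\tfrac12\sum_i y_i|a_i\cdot u|\Bigr)+u\cdot Av .
\]
Here $A$ is skew and accounts for the rotation freedom. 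The first-order distance is therefore
\[
\delta=\min_A\sup_{u}|\delta h(u)| .
\]
This quantity is piecewise linear and positively homogeneous in the parameters $(y,Y)$, with $y\ge0$.

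The key step, and the main obstacle, is the sharp inequality
\[
\delta(y,Y)^2\le\frac{8}{\sqrt2}\Bigl(\frac1{20}\|X\|^2+\frac5{24}\|Y-X/10\|^2\Bigr),
\]
together with identification of the equality cases. My first attempt is a dual certificate. For any probability measure $\rho$ on directions with $\int u\cdot Av\,d\rho=0$ for all skew $A$, we have $\delta\ge\bigl|\int\delta h\,d\rho\bigr|$. I would choose $\rho$ concentrated near $\pm e_1$, the direction of longest sweep in $K_{\rm ext}$. This makes the averaged displacement a linear functional $\ell(y,Y)$. One then needs the matching upper bound $\delta\le\ell$ in the relevant sector, and Cauchy--Schwarz of $\ell$ against the quadratic form then gives the coefficient.

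Because $\delta$ is a maximum over finitely many vertex normal cones, I would organize this by the symmetry group of the frame system, namely the orbits in Lemma~\ref{lem:frame-geometry}. This reduces the problem to finitely many sectors. In each sector, $\delta^2/(\text{quadratic form})$ is maximized by a Lagrange computation. In the maximizing sector I expect the optimizer to be the mode $K_{\rm ext}=\operatorname{diag}(12,10,10,0)$ with $Y=X/10+{}$(optimal strain), normalized as $Y_{\rm ext}$. For $(\tau K_{\rm ext},-\tau Y_{\rm ext})$ this gives exactly $\delta=8\cdot2^{-1/4}\tau$. Note that Proposition~\ref{prop:local-isoperimetric-cost} shows the affine strain is optimized at $Y_H\approx X/10$, but the Hausdorff-extremal mode need not use that value, so the Lagrange problem must be joint in $(X,Y)$. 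Substituting the cubic remainders then gives \eqref{eq:sharp-Hausdorff-cubic}.

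For optimality, I evaluate $P_\tau$ directly: $\operatorname{def}(P_\tau)=\tfrac{\sqrt2}8d_{24}(P_\tau)^2+O(\tau^3)$. For the stability clause, the sectorwise Lagrange problem shows that the quadratic form minus $\tfrac{\sqrt2}{8}\delta^2$ is coercive transversally to the extremal ray, modulo symmetries. This gives
\[
\|(y,Y)-\tau(K_{\rm ext},-Y_{\rm ext})\|^2\le Cr_P^2\bigl(\varepsilon(P)+r_P\bigr),
\]
where the $r_P$ term absorbs the cubic errors. Converting back to Hausdorff distance with the Lipschitz dependence from Lemma~\ref{lem:24-cell-local-converse} yields the bound $Cr_P\sqrt{\varepsilon(P)+r_P}$.
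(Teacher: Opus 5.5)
Your overall architecture matches the paper's: local coordinates from Lemma~\ref{lem:24-cell-local-converse}, the quadratic cost of Proposition~\ref{prop:local-isoperimetric-cost}, a first-order support displacement, a sharp comparison of displacement with cost, then evaluation on $P_\tau$ and a rigidity estimate. The gap is the step you call the main obstacle: it is not proved, and the tool you propose for it points the wrong way. A measure $\rho$ annihilating skew $A$ gives $\delta\ge|\int\delta h\,d\rho|$, which is a \emph{lower} bound on the displacement. The inequality \eqref{eq:sharp-Hausdorff-cubic} needs the \emph{upper} bound $\delta^2\le\frac{8}{\sqrt2}\operatorname{cost}$ for every admissible $(z,Y)$.

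No fixed linear functional dominates $\delta$ on a ``sector''. In each normal cone $N_v$ the supremum runs over a continuum of unit $u$, and $u\mapsto|a_i\cdot u|$ has kinks inside the cone. So $\delta$ is not piecewise linear in the parameters, contrary to your claim: on a polyhedral subcone, $\sup_{\|u\|=1}u\cdot w$ is a projection norm of $w$, for instance $\|Yv\|$ when $Yv$ is interior to $N_v$. Consequently ``finitely many sectors plus a Lagrange computation'' is not a well-posed reduction, and your expectation that the optimizer is the $K_{\rm ext}$ mode is exactly what must be proved.

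The missing idea is to exchange the two suprema. For this direction you can ignore rotations altogether: $d_{24}(P)\le d_H(\widehat P,\widehat C_{24})$, so take $A=0$. Fix a vertex $v$ and a unit $u\in N_v$, and set $B=-Y-\frac12K-\frac s8\Id$ with $s=\tr K$. Then $u\cdot Bv+\frac12\sum_i z_i|a_i\cdot u|$ is \emph{linear} in $\xi=(z,Y-X/10)$. Cauchy--Schwarz bounds its square by $\operatorname{dual}_v(u)\operatorname{cost}(\xi)$, where $\operatorname{dual}_v(u)$ is the squared dual norm of that functional.

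Everything then reduces to the explicit bound $\operatorname{dual}_v(u)\le8\|u\|^2$ on every $N_v$. Orthogonality of the traceless projector blocks of distinct frames, together with $G^{-1}=\Id_3+\1\1^{\mathsf T}$ on a retained triple, gives $\operatorname{dual}_v(u)=8h^2-10h(p+q)+14(p^2+q^2)+10pq+4r^2$ at a retained vertex. The cone inequality $h\ge\|b\|_1$ for $u=(h,b)$ then yields $8\|u\|^2-\operatorname{dual}_v(u)\ge0$, with equality only at $u=v$. At omitted vertices one gets $\operatorname{dual}_v(u)\le6\|u\|^2$. The same computation supplies the equality case, the exclusion of omitted vertices, and, using $z_i\ge0$, the sign choice in the rigidity estimate. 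Your phrase ``coercive transversally'' only asserts this; $\operatorname{cost}-\frac{\sqrt2}{8}\delta^2$ is not a quadratic form, so coercivity is not automatic.

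Your certificate does belong in the optimality part. A point mass at $u=v=e_1$ kills $e_1\cdot Ae_1$, which is why rotations cannot reduce $d_{24}(P_\tau)$ at first order. Lemma~\ref{lem:24-cell-local-converse} by itself only controls parameters; the first-order Hausdorff bound needs its own support-function argument.

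Finally, recheck your linearization. Written with $X=\tf K$, the isotropic term is $\frac14(\tr K)\Id$, not $\frac18$; equivalently, $\frac12K+\frac18(\tr K)\Id$. Also, $P_\tau$ has $Y=+\tau Y_{\rm ext}$, because $L^{-\mathsf T}=e^{\tau Y_{\rm ext}}$. With your formulas the displacement at $e_1$ does not come out to $-8\tau$.
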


The bound on distance to the extremal family is relative to the total departure $r_P$ from the regular cell. An excess normalized cost $\varepsilon(P)$ permits only an $O(\sqrt{\varepsilon(P)+r_P})$ relative departure from that family.

\begin{proof}
Let $c_{\rm vol}=2^{-1/4}$, so $\widehat C_{24}=c_{\rm vol}C_{24}$.
In either nonprincipal cone, write
\[
 K=\sum_i z_i\Pi_{a_i},\quad s=\sum_i z_i,\quad X=\tf K,
 \quad\text{with }z_i\ge0,\quad Y\in\Sym_0(\R^4).
\]
After rotation, every sufficiently close unit-volume parallelohedron has the form
\begin{equation*}
 P(z,Y)=c_{\rm vol}\det(\Id+K)^{-1/8}e^{-Y}(\Id+K)^{-1/2}
 \left(C_{24}+\frac12\sum_i z_i[-a_i,a_i]\right).
\end{equation*}
Lemma~\ref{lem:24-cell-local-converse} gives such coordinates with
\begin{equation}\label{eq:intrinsic-small-coordinates}
 r_{\rm par}:=\|z\|+\|Y\|\le C_1d_{24}(P).
\end{equation}
The traceless projector blocks are orthogonal, with Gram matrix $G=\Id_3-\frac14\1\1^{\mathsf T}>0$ on each retained triple.
Thus the local converse controls the weights, and division by the core coefficient gives the representation.
Trace normalization replaces $X$ by $X/(1+s/4)=X+O(r_{\rm par}^2)$, so the second variation gives
\begin{equation}\label{eq:intrinsic-quadratic-energy}
 \operatorname{def}(P(z,Y))=\operatorname{cost}(z,Y)+O(r_{\rm par}^3),
\end{equation}
where
\[
\operatorname{cost}(z,Y)=\frac1{20}\|X\|^2+
              \frac5{24}\|Y_{\rm res}\|^2
 \quad\text{and}\quad Y_{\rm res}=Y-X/10.
\]
The positive frame Gram matrices make $\operatorname{cost}$ positive definite.

\smallskip\noindent
\emph{Support displacement.}
Let $N_v$ be the closed normal cone at a vertex $v$ of $C_{24}$.
Define
\begin{gather*}
 B=-Y-\frac12K-\frac{s}{8}\Id
   =-Y_{\rm res}-\frac35K-\frac{s}{10}\Id,\qquad
 g_z(u)=\frac12\sum_i z_i|a_i\cdot u|,\\
 \operatorname{disp}(z,Y)=\max_{v\in\operatorname{vert}C_{24}}
       \sup_{u\in N_v,\,\|u\|=1}|u\cdot Bv+g_z(u)|.
\end{gather*}
The closed cones retain every incident vertex at a wall.
We claim
\begin{equation}\label{eq:intrinsic-support-upper}
 d_H(P(z,Y),c_{\rm vol}C_{24})\le c_{\rm vol}\operatorname{disp}(z,Y)+O(r_{\rm par}^2).
\end{equation}
The prefactor divided by $c_{\rm vol}$ is $\Id+B+O(r_{\rm par}^2)$, and $g_z((\Id+B)^{\mathsf T}u)=g_z(u)+O(r_{\rm par}^2)$ for unit $u$.
With an active vertex $v_t$ along $w_t=u+tB^{\mathsf T}u$, absolute continuity of the piecewise linear support function gives
\[
 h_{C_{24}}((\Id+B)^{\mathsf T}u)-h_{C_{24}}(u)+g_z(u)
 =\int_0^1\bigl(u\cdot Bv_t+g_z(u)\bigr)\,dt.
\]
Since $w_t\in N_{v_t}$, the integrand has absolute value at most $\|w_t\|\operatorname{disp}(z,Y)+O(r_{\rm par}^2)=\operatorname{disp}(z,Y)+O(r_{\rm par}^2)$.
Taking the supremum over unit $u$ gives the claimed Hausdorff bound.

\smallskip\noindent
\emph{Sharp comparison.}
We prove
\begin{equation*}
 \operatorname{disp}(z,Y)^2\le8\operatorname{cost}(z,Y).
\end{equation*}
Orient the frame containing $v$ so that $v=e_1$, and write $u=(h,b)\in N_v$. The coordinate and half-sign vertices give $h\ge\|b\|_1$.
The functional $u\cdot Bv+g_z(u)$ is
\[
 -u\cdot Y_{\rm res}v+\sum_i\ell_i z_i,
 \quad\text{where }\ell_i=\frac12|a_i\cdot u|
       -\frac35(a_i\cdot u)(a_i\cdot v)-\frac1{10}h.
\]
Because $G^{-1}=\Id_3+\1\1^{\mathsf T}$ and the frame blocks are orthogonal, its squared dual norm for $\operatorname{cost}$ is
\begin{equation*}
 \operatorname{dual}_v(u)=\frac{24}{5}\left\|\tf
       \frac{uv^{\mathsf T}+vu^{\mathsf T}}2\right\|^2
       +20\sum_{j=1}^3\ell_{T_j}^{\mathsf T}G^{-1}\ell_{T_j}.
\end{equation*}
The first term is $(18h^2+12\|b\|^2)/5$, and the dual norm also bounds the functional on $z_i\ge0$.
In either other frame, orient $a=(1,\varepsilon)/2$. Then $\ell_a=\varepsilon\cdot b/10$, and orthogonality gives
\[
 \sum_{a\in\mathrm{fr}_j}\ell_a=0\quad\text{and}\quad
 \sum_{a\in\mathrm{fr}_j}\ell_a^2=\frac{\|b\|^2}{25}.
\]
Omitting any one vector therefore leaves $\ell_{T_j}^{\mathsf T}G^{-1}\ell_{T_j}=\|b\|^2/25$.

If $v$ is retained, write $(p,q,r)$ for the absolute coordinates of $b$, with $r$ along the omitted vector.
The remaining block has coefficients $(-h/5,-h/10+p/2,-h/10+q/2)$, giving
\[
 \operatorname{dual}_v(u)=8h^2-10h(p+q)+14(p^2+q^2)+10pq+4r^2.
\]
Since $h\ge p+q+r$,
\[
 8\|u\|^2-\operatorname{dual}_v(u)
 \ge4(p^2+q^2)+10pq+10r(p+q)+4r^2\ge0,
\]
with equality only when $b=0$.
If $v$ is omitted, let $\ell=\|b\|_1$.
Its own-frame coefficients are $-h/10+|b_i|/2$, and the same calculation gives
\[
 \operatorname{dual}_v(u)=6h^2-8h\ell+9\|b\|^2+5\ell^2
 \le6\|u\|^2,
\]
since $h\ge\ell\ge\|b\|$.
Cauchy--Schwarz proves the sharp comparison, and \eqref{eq:intrinsic-support-upper}, \eqref{eq:intrinsic-small-coordinates}, and \eqref{eq:intrinsic-quadratic-energy} give
\[
 d_{24}(P)^2\le8c_{\rm vol}^2\operatorname{def}(P)+O(d_{24}(P)^3),
\]
proving the sharp lower bound.

\smallskip\noindent
\emph{Extremal family.}
Equality requires a retained $v$ and $u=v$. Choose $v=a_1=e_1$ and omitted vector $e_4$.
Dual Cauchy--Schwarz and $z_i\ge0$ force a positive multiple of
\[
 (z_1,z_2,z_3)=(12,10,10),\quad z_i=0\ (i>3),\quad
 Y_{\rm res}=\frac{24}{5}\tf \Pi_{e_1}.
\]
This gives $K_{\rm ext},Y_{\rm ext}$, with $\operatorname{cost}=8$ and radial displacement $-8$ at $u=v=e_1$.
Thus $\operatorname{disp}=8$ and
\begin{equation*}
 \operatorname{def}(P_\tau)=8\tau^2+O(\tau^3)\quad\text{and}\quad
 d_{24}(P_\tau)=8c_{\rm vol}\tau+O(\tau^2).
\end{equation*}
A minimizing rotation satisfies $d_H(U_\tau C_{24},C_{24})=O(\tau)$. After right multiplication by the inverse of the nearby symmetry, \eqref{eq:24-cell-inverse-support} with no segments gives $U_\tau=\Id+O(\tau)$.
Since $e_1$ exposes a unique vertex of $C_{24}$,
\[
 h_{U_\tau C_{24}}(e_1)=e_1\cdot U_\tau e_1=1+O(\tau^2)
 \quad\text{and}\quad h_{P_\tau}(e_1)=c_{\rm vol}(1-8\tau)+O(\tau^2).
\]
Thus rotations preserve the leading radial displacement, proving the asymptotics and optimality.

\smallskip\noindent
\emph{Quantitative rigidity.}
Recall that $\operatorname{cost}$ is the quadratic area cost and $\operatorname{disp}$ the first-order support displacement.
For $\xi=(z,Y_{\rm res})$, use the norm $\|\xi\|_{\mathrm{cost}}=\sqrt{\operatorname{cost}(\xi)}$ and let $\operatorname{modes}$ be the finite set of positive extremal vectors just identified, indexed by retained vertices.
Each has $\operatorname{cost}=8$.
We claim that
\begin{equation}\label{eq:quantitative-tangent-rigidity}
 \min_{\xi_{\rm ext}\in\operatorname{modes}}
 \left\|\xi-\sqrt{\operatorname{cost}(\xi)/8}\,\xi_{\rm ext}\right\|_{\mathrm{cost}}^2
 \le C\{8\operatorname{cost}(\xi)-\operatorname{disp}(\xi)^2\}.
\end{equation}
Normalize $\operatorname{cost}(\xi)=1$ and set $\gamma=8-\operatorname{disp}(\xi)^2$.
The left side is at most $4$, so only small $\gamma$ matters.
Choose $v,u$ attaining $\operatorname{disp}$ and let $r_u$ represent the functional $\xi\mapsto u\cdot Bv+g_z(u)$ in the $\operatorname{cost}$ inner product. Thus $\|r_u\|_{\mathrm{cost}}^2=\operatorname{dual}_v(u)$.
For $\gamma<1$, the bound $\operatorname{dual}_v(u)\le6$ excludes omitted vertices.
At a retained vertex the dual estimate gives
\[
 7\le \operatorname{disp}^2\le \operatorname{dual}_v(u)\le8
 \quad\text{and}\quad 4\|b\|^2\le8-\operatorname{dual}_v(u)\le\gamma.
\]
Lipschitz dependence on $u$, $\|u-v\|\le\sqrt2\|b\|$, and $r_v=-\xi_{\mathrm{ext},v}$ give
\[
 \left\|\frac{r_u}{\sqrt{\operatorname{dual}_v(u)}}+
                  \frac{\xi_{\mathrm{ext},v}}{\sqrt8}\right\|_{\mathrm{cost}}\le C\sqrt\gamma.
\]
If $\sigma$ is the sign of $\langle\xi,r_u\rangle_{\mathrm{cost}}$, the exact Cauchy--Schwarz remainder gives
\[
 \left\|\xi-\sigma\frac{r_u}{\sqrt{\operatorname{dual}_v(u)}}\right\|_{\mathrm{cost}}^2
 =2\left(1-\frac{\operatorname{disp}}{\sqrt{\operatorname{dual}_v(u)}}\right)
 \le\frac{2\gamma}{7}.
\]
Consequently $\|\xi+\sigma \xi_{\mathrm{ext},v}/\sqrt8\|_{\mathrm{cost}}\le C\sqrt\gamma$.
If $\sigma=1$, the nonnegative $v$-coordinate differs from $-12/\sqrt8$ by at least $12/\sqrt8$, and its squared dual norm $20(G^{-1})_{11}=40$ forces distance at least $3/\sqrt{20}$.
For small $\gamma$ this is impossible, so $\sigma=-1$ and the rigidity estimate follows.

For the coordinates of $P$, write $r_P=d_{24}(P)>0$.
Equations~\eqref{eq:intrinsic-small-coordinates}--\eqref{eq:intrinsic-support-upper} give
\[
 8\operatorname{cost}-\operatorname{disp}^2\le8\left\{\operatorname{def}(P)
                    -\frac{\sqrt2}{8}r_P^2\right\}+Cr_P^3
 \le C r_P^2\{\varepsilon(P)+r_P\}.
\]
The local formula is uniformly Lipschitz, and the frame changes and coordinate permutations identifying $\operatorname{modes}$ preserve $C_{24}$.
Thus \eqref{eq:quantitative-tangent-rigidity} yields an orthogonal $U$ and $\tau_{\rm cost}=\sqrt{\operatorname{cost}/8}$ with
\[
 d_H(\widehat P,UP_{\tau_{\rm cost}})
       \le C r_P\sqrt{\varepsilon(P)+r_P}.
\]
Finally, \eqref{eq:sharp-Hausdorff-cubic} and \eqref{eq:intrinsic-quadratic-energy} imply
\[
 \left|\operatorname{cost}-\frac{\sqrt2}{8}r_P^2\right|
       \le C r_P^2\{\varepsilon(P)+r_P\}.
\]
The inequality $|\sqrt a-\sqrt b|\le\sqrt{|a-b|}$ bounds $|\tau_{\rm cost}-r_P/(8c_{\rm vol})|$ by $Cr_P\sqrt{\varepsilon(P)+r_P}$.
Lipschitz dependence on this parameter proves the claimed proximity to the extremal family.
\end{proof}

\begin{proof}[Proof of Corollary~\ref{cor:global-stability}]
Choose a fixed $\varepsilon_0>0$ below the deficit gap in Theorem~\ref{thm:zonotope-gap}, with $\log(1+\varepsilon_0)$ below the threshold of Corollary~\ref{cor:entropy-geometric-control}.
For $\operatorname{def}(P)\le\varepsilon_0$, the principal cone is excluded, so classification and that corollary give
\[
 d_{24}(P)\le C\sqrt{\log(1+\operatorname{def}(P))}
 \le C\sqrt{\operatorname{def}(P)}.
\]
For $\varepsilon_0\le\operatorname{def}(P)\le1$, the area of $\widehat P$ is at most $2^{15/4}$ and Lemma~\ref{lem:degeneracy} bounds its diameter, so enlarging $C$ extends the estimate to this range.
After rotation realizing $d_{24}(P)$, Lemma~\ref{lem:24-cell-local-converse} and Proposition~\ref{prop:local-isoperimetric-cost} give $\operatorname{def}(P)\le Cd_{24}(P)^2$ locally.
Theorem~\ref{thm:sharp-Hausdorff-coefficient} gives the sharp lower coefficient and optimal exponent.
\end{proof}

\section*{Acknowledgments}

\subsection*{Human interactions}

The authors thank Thomas Hales for useful discussions.

\subsection*{LLM interactions}

While the GPT-6 Astra model of ChatGPT was used to explore alternate routes for the concluding step, most attempts failed, and the viable candidates fell short of our original approach. It also provided editorial suggestions and feedback on the manuscript.

\end{document}